\documentclass[11pt]{article}

\usepackage[letterpaper,margin=1in]{geometry}
\usepackage{amsmath,amssymb,amsthm,mathtools,bm}
\usepackage{enumitem}
\usepackage{microtype}
\usepackage{xcolor}
\usepackage[colorlinks=true,linkcolor=blue,citecolor=blue,urlcolor=blue]{hyperref}

\allowdisplaybreaks
\numberwithin{equation}{section}

\newtheorem{theorem}{Theorem}[section]
\newtheorem{lemma}[theorem]{Lemma}
\newtheorem{proposition}[theorem]{Proposition}

\newtheorem{conjecture}[theorem]{Conjecture}
\newtheorem{claim}{Claim}
\theoremstyle{definition}

\newtheorem{problem}[theorem]{Problem}
\theoremstyle{remark}
\newtheorem{remark}[theorem]{Remark}

\newcommand{\up}{\uparrow}
\newcommand{\symdiff}{\mathbin{\triangle}}

\title{\bf \Large }
\date{\today }
\title{{\bf \Large 
Rigidity and stability for biased cross-intersecting families}\footnote{ Lihua Feng was supported by the NSFC (Nos. 12271527 and 12471022) and NSF of Qinghai Province (No. 2025-ZJ-902T). E-mail addresses: \url{wuyjmath@163.com} (Y. Wu), \url{fenglh@163.com} (L. Feng). }
\author{
{\small  Yongjiang Wu,\ \ Lihua Feng\footnote{Corresponding author}
}\\[2mm]
\small School of Mathematics and Statistics, HNP-LAMA, Central South University\\
 \small Changsha, Hunan, 410083, China\\ 
}}

\begin{document}
\maketitle
\begin{abstract}
Let $\mathbf p=(p_1,\ldots,p_n)$ and $\mathbf q=(q_1,\ldots,q_n)$ belong to $(0,1/2]^n$, and let $\mu_{\mathbf p}$ and $\mu_{\mathbf q}$ be the associated measures on $2^{[n]}$. Suppose that $p_1q_1=\max_{i\in[n]}p_iq_i$. We prove that every pair of cross-intersecting families $\mathcal A,\mathcal B\subseteq2^{[n]}$ satisfies the sharp inequality $\mu_{\mathbf p}(\mathcal A)\mu_{\mathbf q}(\mathcal B)\leq p_1q_1$. This  confirms a  conjecture of Suda, Tanaka and Tokushige [Math. Program. 166 (2017) 113--130]. We also determine all equality cases. When $p_1q_1<1/4$, equality is attained only when both families consist of all subsets containing the same product-maximizing coordinate. At the endpoint $p_1q_1=1/4$, we identify precisely the additional extremal pairs, which are induced by half-sized increasing families on the coordinates satisfying $p_i=q_i=1/2$.

We further resolve the remaining conjecture from the same paper by proving a dimension-free stability theorem. Assume that the first coordinate has maximum probability under both measures and that $p_1,q_1<1/2$. If $\mu_{\mathbf p}(\mathcal A)\mu_{\mathbf q}(\mathcal B)\geq(1-\varepsilon)p_1q_1$, then there exists a coordinate $j$ such that both $\mathcal A$ and $\mathcal B$ are within $c(p_1,q_1)\varepsilon$, in their respective measures, of the family of all subsets containing $j$. This improves the conjectured $O(\sqrt{\varepsilon})$ bound to a linear one. The main new ingredient in the sharp measure theorem is a log-odds interpolation combined with induction on coordinate sections, while stability follows from a semidefinite estimate and a one-coordinate approximation theorem.
\end{abstract}

{\bf AMS Classification}:  05D05; 05C65 

{\bf Keywords}: Biased-measures;  Stability; Cross-intersecting families

\tableofcontents
\section{Introduction}
\label{sec:introduction}

 For a positive integer $n$, write $[n]=\{1,\ldots,n\}$ and  let $2^{[n]}$ be the power set of $[n]$. A family $\mathcal{A}\subseteq 2^{[n]}$ is called \textit{intersecting} if $A\cap A'\neq\emptyset$ for all $A,A'\in\mathcal{A}$. More generally, two families $\mathcal{A},\mathcal{B}\subseteq 2^{[n]}$ are called \textit{cross-intersecting} if
$
 A\cap B\neq\emptyset
$
for all $A\in\mathcal{A}$ and $B\in\mathcal{B}$.

Intersection problems constitute one of the foundational threads of extremal set theory. The classical starting point is the Erd\H{o}s--Ko--Rado theorem \cite{ErdosKoRado}, which states that if $n\geq 2k$ and $\mathcal{A}\subseteq\binom{[n]}{k}$ is intersecting, then
\begin{equation*}
 |\mathcal{A}|\leq\binom{n-1}{k-1}.
\end{equation*}
For $n>2k$, equality holds if and only if $\mathcal{A}= \left\{ A \in \binom{[n]}{k} : x \in A \right\}$ for some $x\in[n]$.
The cross-intersecting extension was initiated by Pyber \cite{Pyber} and subsequently sharpened by Matsumoto and Tokushige \cite{MatsumotoTokushige}. Specifically, if
$
 \mathcal{A}\subseteq\binom{[n]}{k},
 \mathcal{B}\subseteq\binom{[n]}{\ell}
$
are cross-intersecting and $n \geq\max\{2k,2\ell\}$, then
$$
 |\mathcal{A}|\,|\mathcal{B}|
 \leq
 \binom{n-1}{k-1}\binom{n-1}{\ell-1}.
$$
Moreover, if  $n >\max\{2k,2\ell\}$, then equality holds if and only if\ there exists $x\in[n]$ such that
 $\mathcal{A}= \left\{ A \in \binom{[n]}{k} : x \in A \right\}, \mathcal{B}= \left\{ B \in \binom{[n]}{\ell} : x \in B \right\}$.  These results initiated a broad research programme  aimed at establishing sharp bounds, extremal configurations and stability results for set systems under various restrictions; we refer to  \cite{FranklTokushigeSurvey,FranklTokushigeBook} for comprehensive surveys.
 
A natural weighted counterpart of the uniform setting is obtained by replacing cardinality with probability. For a probability vector
$
 \mathbf p=(p_1,\ldots,p_n)\in(0,1)^n,
$
the \textit{$\mathbf{p}$-biased measure} of a family $\mathcal{A}\subseteq 2^{[n]}$ is defined by
$$
 \mu_{\mathbf p}(\mathcal{A})
 =
 \sum_{A\in\mathcal{A}}
 \prod_{i\in A}p_i
 \prod_{i\in[n]\setminus A}(1-p_i).
$$
Equivalently, if $X\subseteq[n]$ is formed by including each coordinate independently, with coordinate $i$ included with probability $p_i$, then
\begin{equation*}
 \mu_{\mathbf p}(\mathcal{A})=\mathbb{P}(X\in\mathcal{A}).
\end{equation*}
The vector $\mathbf p$ thus allows different coordinates to carry different weights. 
In the special case where all coordinates have the same probability, say $p_i=p$ for every $i$, the measure reduces to the usual \textit{$p$-biased measure}
$$
\mu_p(\mathcal{A})=\sum_{A\in\mathcal{A}} p^{|A|}(1-p)^{n-|A|},
$$
which assigns to each subset a weight depending only on its size. This homogeneous setting has been extensively studied, for instance by Filmus \cite{F13}, Friedgut \cite{Friedgut} and Tokushige \cite{Tokushige} via spectral methods. The present paper, by contrast, treats the fully inhomogeneous case, in which each coordinate has its own probability. The resulting loss of permutation symmetry is precisely what makes the extremal problem difficult.

For $j\in[n]$, define the \textit{star}
$$
 \mathcal{S}_j=\{A\subseteq[n]:j\in A\},
$$
whose measure is
$
 \mu_{\mathbf p}(\mathcal{S}_j)=p_j.
$
Fishburn, Frankl, Freed, Lagarias and Odlyzko \cite{FishburnEtAl} established the basic measure analogue of the Erd\H{o}s--Ko--Rado theorem: if  $p_1\ge\cdots\ge p_n$ and $p_2\le \frac{1}{2}$, then  every intersecting family $\mathcal{A}\subseteq2^{[n]}$ satisfies
\begin{equation*}
 \mu_{\mathbf p}(\mathcal{A})\leq p_1.
\end{equation*}
Suda, Tanaka and Tokushige \cite{STT} later conjectured that the condition $p_2\le \frac{1}{2}$ can be relaxed to $p_3\le \frac{1}{2}$.  Subsequently, Tokushige \cite{T22} proved this conjecture under the additional assumptions $p_1\le\frac{1}{2}$ or $1-p_2>p_3$, using the high-dimensional Hoffman bound of Filmus, Golubev and Lifshitz \cite{F21}. Recently, Wu and Feng \cite{W26} confirmed the full conjecture using the generating set method.

For cross-intersecting families, one must work with two, possibly unrelated, probability vectors
$
 \mathbf p=(p_1,\ldots,p_n)
$
and 
$
 \mathbf q=(q_1,\ldots,q_n).
$
Related weighted cross-intersection inequalities were obtained by Tokushige \cite{Tokushige} and Borg \cite{Borg}. A systematic treatment of the present two-measure problem was initiated by Suda, Tanaka and Tokushige \cite{STT}. They represented cross-intersecting pairs as cross-independent sets in the bipartite disjointness graph and developed a semidefinite relaxation extending the classical spectral bounds for independent sets.
Their main result states that if, after relabelling, $
p_1=\max_{i\in[n]}p_i
$
and 
$q_1=\max_{i\in[n]}q_i$, with  $p_i,q_i\leq\frac{1}{2}$ for all $i\geq 2$,
then every cross-intersecting pair $\mathcal{A},\mathcal{B}\subseteq2^{[n]}$ satisfies
\begin{equation*}
 \mu_{\mathbf p}(\mathcal{A})
 \mu_{\mathbf q}(\mathcal{B})
 \leq p_1q_1.
\end{equation*}
The common-maximum assumption is strong, since it requires the same coordinate to be individually optimal for both measures. Suda, Tanaka and Tokushige \cite{STT} conjectured that it should be enough for a coordinate to maximize only the product $p_iq_i$.
We adopt the following corrected formulation of their conjecture, as stated in the survey of Frankl and Tokushige \cite[Conjecture 10.1]{FranklTokushigeSurvey}.

\begin{conjecture}[Suda--Tanaka--Tokushige \cite{STT}; corrected form in \cite{FranklTokushigeSurvey}]
\label{conj:product}
Let $\mathbf p=(p_1,\ldots,p_n), \mathbf q=(q_1,\ldots,q_n)\in(0,\frac{1}{2}]^n$ satisfy
$
 p_1q_1=\max_{i\in[n]}p_iq_i.
$
If $\mathcal{A},\mathcal{B}\subseteq2^{[n]}$ are cross-intersecting, then
$$
 \mu_{\mathbf p}(\mathcal{A})\mu_{\mathbf q}(\mathcal{B})
 \leq p_1q_1.
$$
\end{conjecture}

\begin{remark}\label{rem1}
 In the original paper  \cite{STT}, the authors stated a slightly different conjecture, where the restriction $p_i,q_i \le \frac{1}{2}$ was imposed only for coordinates $i \geq 2$, leaving the first coordinate free to be larger than $\frac{1}{2}$. That version is false. For instance, take 
$$n=2,\qquad
 \mathbf p=\left(\frac1{10},\frac15\right),\qquad
 \mathbf q=\left(\frac45,\frac38\right),\qquad \mathcal{A}=2^{[2]}\setminus\{\emptyset\},\qquad\mathcal{B}=\bigl\{\{1,2\}\bigr\}.
$$
The two families are cross-intersecting, the restrictions on the second coordinate are satisfied, and the first coordinate is the unique product-maximizer because
$
 p_1q_1=\frac{2}{25}>\frac{3}{40}=p_2q_2.
$
On the other hand,
\begin{equation*}
 \mu_{\mathbf p}(\mathcal{A})
 =1-\left(1-\frac1{10}\right)\left(1-\frac15\right)
 =\frac7{25},
 \qquad
 \mu_{\mathbf q}(\mathcal{B})
 =\frac45\cdot\frac38
 =\frac3{10}.
\end{equation*}
Hence,
\begin{equation*}
 \mu_{\mathbf p}(\mathcal{A})\mu_{\mathbf q}(\mathcal{B})
 =\frac{21}{250}
 >\frac{20}{250}
 =p_1q_1.
\end{equation*}
The failure is caused precisely by the first coordinate $q_1=\frac{4}{5}>\frac{1}{2}$. In the same paper, the authors proved a weaker version of  Conjecture~\ref{conj:product} under the stronger assumption $p_i, q_i \le \frac{1}{3}$ for all $i\geq 1$  \cite[Theorem 3]{STT}. The original conjecture with the restriction only for $i \geq 2$ is most likely a misprint.
\end{remark}

\medskip
A family $\mathcal{A}\subseteq 2^{[n]}$ is called \textit{increasing} if $A\in\mathcal{A}$ and $A\subseteq B$ imply $B\in\mathcal{A}$.
Our first main result confirms Conjecture~\ref{conj:product}  and gives a complete characterization of the equality cases.

\begin{theorem}\label{thm:main-product}
Let $\mathbf p=(p_1,\ldots,p_n), \mathbf q=(q_1,\ldots,q_n)\in(0,\frac{1}{2}]^n$,  and assume that
$
p_1q_1=\max_{i\in[n]}p_iq_i,
$
$
 I=\{i\in[n]:p_iq_i=p_1q_1\}.
$
If $\mathcal{A},\mathcal{B}\subseteq2^{[n]}$ are cross-intersecting, then
$$
 \mu_{\mathbf p}(\mathcal{A})\mu_{\mathbf q}(\mathcal{B})\leq p_1q_1.
$$
Moreover, equality is characterized as follows.
\begin{enumerate}[label=\textup{(\roman*)},leftmargin=2.2em]
 \item If $p_1q_1<\frac{1}{4}$, then equality holds if and only if
 $
  \mathcal{A}=\mathcal{B}=\mathcal{S}_j
$
 for some $j\in I$.
 \item If $p_1q_1=\frac{1}{4}$, then
$
  I=\{i\in[n]:p_i=q_i=\frac{1}{2}\}\neq \emptyset.
$
Equality holds if and only if there is an increasing family $\mathcal{H}\subseteq2^I$ with $|\mathcal{H}|=2^{|I|-1}$ such that
 $
  \mathcal{A}=\{A\subseteq[n]:A\cap I\in\mathcal{H}\}
$ 
and
$
 \mathcal{B}=\{B\subseteq[n]:B\cap I\in\mathcal{H}^*\},
$
 where
$
  \mathcal{H}^*
  =\{T\subseteq I: T\cap H\neq\emptyset\text{ for every }H\in\mathcal{H}\}.
 $
\end{enumerate}
\end{theorem}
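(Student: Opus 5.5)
We will prove the inequality by induction on $n$ using coordinate sections, together with a log-odds interpolation that handles the fact that the two measures need not agree on which coordinate is individually largest. Since the measure is monotone, we may first replace $\mathcal A,\mathcal B$ by their upward closures. These remain cross-intersecting, so we may assume both families are increasing. For $n=1$ the only nontrivial cross-intersecting pair is $\mathcal A=\mathcal B=\{\{1\}\}$, which gives $p_1q_1$. For the inductive step we pick a coordinate $k\neq 1$ (when $n\ge2$) and split each family into sections:
\[
\mathcal A_0=\{A\subseteq[n]\setminus\{k\}:A\in\mathcal A\},\qquad \mathcal A_1=\{A\subseteq[n]\setminus\{k\}:A\cup\{k\}\in\mathcal A\},
\]
and similarly for $\mathcal B$. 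Monotonicity gives $\mathcal A_0\subseteq\mathcal A_1$ and $\mathcal B_0\subseteq\mathcal B_1$. Cross-intersection gives that $(\mathcal A_0,\mathcal B_1)$, $(\mathcal A_1,\mathcal B_0)$ and $(\mathcal A_0,\mathcal B_0)$ are all cross-intersecting pairs on $[n]\setminus\{k\}$. Write $a_i=\mu(\mathcal A_i)$ and $b_i=\mu(\mathcal B_i)$. Then
\[
\mu_{\mathbf p}(\mathcal A)\mu_{\mathbf q}(\mathcal B)=\bigl((1-p_k)a_0+p_ka_1\bigr)\bigl((1-q_k)b_0+q_kb_1\bigr).
\]
The induction hypothesis gives $a_0b_1\le M$ and $a_1b_0\le M$, where $M=p_1q_1$. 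The pair $(\mathcal A_1,\mathcal B_1)$ is not constrained by cross-intersection, so we only have $a_1,b_1\le1$.

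The key step is to bound the product above subject to these constraints. Expanding, we need to control
\[
(1-p_k)(1-q_k)a_0b_0+(1-p_k)q_ka_0b_1+p_k(1-q_k)a_1b_0+p_kq_ka_1b_1.
\]
This bilinear bound alone will not suffice, because $a_1b_1$ can be as large as $1$. We therefore strengthen the induction hypothesis to an interpolated family of inequalities. Define $\lambda_i=\log\frac{p_i}{1-p_i}$ and $\kappa_i=\log\frac{q_i}{1-q_i}$. The statement we propose to prove inductively is
\[
\mu_{\mathbf p}(\mathcal A)\,\mu_{\mathbf q}(\mathcal B)\le M
\]
simultaneously for all pairs of measures obtained by moving the log-odds along the segment $t\mapsto(\lambda+s,\kappa-s)$, which preserves the sum of log-odds. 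Alternatively, we reduce to a two-dimensional inequality: the worst case is when $\mathcal A_0$, $\mathcal B_0$ are themselves extremal stars or their degenerate forms, so that $a_0b_1\le M$ and $a_1b_0\le M$ together with $a_0\le a_1$ and $b_0\le b_1$ force
\[
\bigl((1-p_k)a_0+p_ka_1\bigr)\bigl((1-q_k)b_0+q_kb_1\bigr)\le\max\bigl(M,\;p_kq_k\cdot(\text{stuff})\bigr).
\]
Here the conditions $p_kq_k\le M$ and $p_k,q_k\le\tfrac12$ are used. Concretely, after fixing $a_0,b_0$ and the products, the expression is maximised at extreme points, and checking those extreme points reduces to the one-variable inequality
\[
\bigl(1-p_k+p_kx\bigr)\bigl(1-q_k+q_k/x\bigr)\le\dots,
\]
which is handled by AM--GM plus the assumption $p_k,q_k\le\tfrac12$. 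This is the step I expect to be the main obstacle. The pair $(\mathcal A_1,\mathcal B_1)$ is unconstrained, and since $p_1$ may be smaller than $p_k$, the unequal marginals mean a naive induction loses. The log-odds interpolation is what I will try first to control the cross term $p_kq_ka_1b_1$. The idea is to trade $\mathcal A_0\subseteq\mathcal A_1$ against the cross-intersection of $\mathcal A_1$ with $\mathcal B_0$, using Harris/FKG-type correlation on the remaining coordinates.

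For the equality cases, we trace back through the induction: equality forces equality in each section inequality. If $M<\tfrac14$, the strictness in the one-variable inequality (the AM--GM is strict unless $p_k=q_k=\tfrac12$) forces either that $k$ is irrelevant, i.e. $\mathcal A_0=\mathcal A_1$ and $\mathcal B_0=\mathcal B_1$, or that $\mathcal A=\mathcal B=\mathcal S_k$ with $k\in I$. Iterating over the coordinates then yields $\mathcal A=\mathcal B=\mathcal S_j$. If $M=\tfrac14$, then $p_1q_1\le p_1,q_1\le\tfrac12$ forces $p_1=q_1=\tfrac12$, and likewise every $i\in I$ has $p_i=q_i=\tfrac12$. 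Coordinates outside $I$ must be irrelevant by the strict case, so $\mathcal A$ and $\mathcal B$ depend only on $I$, where both measures are uniform. On the uniform cube, $|\mathcal A||\mathcal B|\le 2^{2|I|-2}$ for cross-intersecting families, with equality exactly when $\mathcal B=\mathcal A^*$ and $|\mathcal A|=2^{|I|-1}$. This follows because $\mathcal B$ is contained in the complement of $\{[n]\setminus A:A\in\mathcal A\}$, which gives $|\mathcal A|+|\mathcal B|\le2^{|I|}$. This gives (ii). Conversely, each listed configuration is checked directly to attain equality.
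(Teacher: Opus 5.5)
\textbf{There is a genuine gap.} It sits exactly at the step you flag as the main obstacle, and that step is the whole content of the theorem.

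The constraints you actually have in the inductive step are $a_0\le a_1\le1$, $b_0\le b_1\le1$, $a_0b_1\le M$, $a_1b_0\le M$ (and $a_0b_0\le M$). All of them are satisfied by $a_1=b_1=1$, $a_0=b_0=M$. There the product is $\bigl((1-p_k)M+p_k\bigr)\bigl((1-q_k)M+q_k\bigr)$. For $p_k=q_k=p_1=q_1=p$ this equals $p^2\bigl(1+p(1-p)\bigr)^2>M$.

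So no extreme-point analysis of this relaxation can close the induction. You need information not implied by the product bound in dimension $n-1$. In that configuration, $a_1=1$ actually forces $\mathcal B_0=\emptyset$, which $a_1b_0\le M$ does not see.

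None of your suggestions supplies this information:
\begin{itemize}
\item The interpolated hypothesis along $(\lambda+s,\kappa-s)$ is false with $M$ fixed. Take $n=1$, $p_1=0.4$, $q_1=0.1$. At the balanced point $p_1(s)=q_1(s)\approx0.214$, the pair $\mathcal A=\mathcal B=\{\{1\}\}$ has product $\approx0.0458>0.04$. With $M=M(s)$ it is just the theorem for other parameters, possibly outside $(0,\frac12]^n$. In either form, no mechanism is given for how it controls the sections.
\item The claim that the worst case has stars as sections is asserted, not derived.
\item The one-variable inequality is left with an unspecified right-hand side.
\item The FKG idea is never turned into an inequality.
\end{itemize}
Consequently your equality analysis for (i), which rests on strictness in this step, is also unsupported.

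\textbf{What the paper adds.} First it eliminates $\mathcal B$. For increasing families, $\mathcal B\subseteq\mathcal A^*$ and $\mu_{\mathbf q}(\mathcal A^*)=1-\mu_{\mathbf 1-\mathbf q}(\mathcal A)$ (Lemma~\ref{lem:transversal}). So in the section step your $b_1,b_0$ are replaced by the $\mu_{\mathbf q}$-measures $u,v$ of the transversals of $\mathcal A(\bar n)$ and $\mathcal A(n)$; now $y=1$ does force $v=0$.

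The missing constraint is Lemma~\ref{lem:logodds}. For increasing $\mathcal A$, $\frac{\beta}{1-\beta}\ge K\frac{\alpha}{1-\alpha}$, where $\alpha=\mu_{\mathbf p}(\mathcal A)$, $\beta=\mu_{\mathbf 1-\mathbf q}(\mathcal A)$ and $K=\min_i\frac{(1-p_i)(1-q_i)}{p_iq_i}$. Equivalently, $u\le h_K(x)$ and $v\le h_K(y)$ with $h_K(t)=\frac{1-t}{1+(K-1)t}$. It is proved by shifting each coordinate's log-odds linearly from $p_i$ to $1-q_i$ (a shift of $\log K_i$). One then combines an inhomogeneous Margulis--Russo formula with the Efron--Stein bound $g(1-g)\le\sum_is_i(1-s_i)I_i$.

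This constraint is combined with $xu\le d_1$, $yv\le d_1$ and the range $\frac1{2(K+1)}<d_1\le\frac1{(1+\sqrt K)^2}$. The sharp two-section inequality of Lemma~\ref{lem:bellman} then gives $\bigl((1-p)x+py\bigr)\bigl(qu+(1-q)v\bigr)\le\max\{d_1,pq\}$. Its proof reduces to $pq=d_1$, uses the decreasing involution $\phi=\min\{h_K,d_1/t\}$, and runs a six-region analysis. Its equality points, $(x,v)=(0,0)$ or $x,v\in[a,b]$ with $xv=d_1$, yield (i).

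For (ii), your counting argument on $2^I$ is fine. However, ``coordinates outside $I$ are irrelevant by the strict case'' needs proof. The paper argues as follows:
\begin{itemize}
\item Since $\mathbf p\le\mathbf 1-\mathbf q$, we get $\alpha\le\beta$.
\item Hence $\frac14=\alpha\mu_{\mathbf q}(\mathcal B)\le\alpha(1-\beta)\le\alpha(1-\alpha)\le\frac14$, which forces $\alpha=\beta=\frac12$.
\item Therefore raising $p_i$ to $1-q_i$ for $i\notin I$ leaves the measure unchanged. Since $1-p_i-q_i>0$, the $i$-influence vanishes.
\end{itemize}
The inequality itself when some $p_i=\frac12$ is obtained by scaling to $r\mathbf p,r\mathbf q$ and letting $r\to1^-$.
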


Suda, Tanaka and Tokushige \cite{STT} also conjectured that  the following stability statement holds.

\begin{conjecture}[Suda--Tanaka--Tokushige \cite{STT}]\label{conj:stability}
Let $\mathbf p=(p_1,\ldots,p_n), \mathbf q=(q_1,\ldots,q_n)\in(0,\frac{1}{2})^n$. Assume that $p_1=\max_{j\in[n]}p_j$ and $q_1=\max_{j\in[n]}q_j$.
Then there is a constant $c=c(p_1,q_1)$ such that, whenever $\mathcal{A},\mathcal{B}\subseteq2^{[n]}$ are cross-intersecting and
\begin{equation*}
 \mu_{\mathbf p}(\mathcal{A})\mu_{\mathbf q}(\mathcal{B})>(1-\varepsilon)p_1q_1,
\end{equation*}
there exists $j\in[n]$ such that
\begin{equation*}
 \max\bigl\{
 \mu_{\mathbf p}(\mathcal{A}\symdiff\mathcal{S}_j),
 \mu_{\mathbf q}(\mathcal{B}\symdiff\mathcal{S}_j)
 \bigr\}
 < c\sqrt\varepsilon.
\end{equation*}
\end{conjecture}

Our second main result  confirms Conjecture~\ref{conj:stability} and yields the sharper linear rate in the regime $\varepsilon\leq1$.

\begin{theorem}\label{thm:main-stability}
Let $\mathbf p=(p_1,\ldots,p_n), \mathbf q=(q_1,\ldots,q_n)\in(0,\frac{1}{2})^n$. Assume that $p_1=\max_{j\in[n]}p_j$, $q_1=\max_{j\in[n]}q_j$, and $q_1\leq p_1$. Then there is a constant $c=c(p_1,q_1)$ with the following property. For any $\varepsilon$, if $\mathcal{A},\mathcal{B}\subseteq2^{[n]}$ are cross-intersecting and
\begin{equation*}
 \mu_{\mathbf p}(\mathcal{A})\mu_{\mathbf q}(\mathcal{B})\geq(1-\varepsilon)p_1q_1,
\end{equation*}
then there exists $j\in[n]$ such that
$$
 \max\bigl\{
 \mu_{\mathbf p}(\mathcal{A}\symdiff\mathcal{S}_j),
 \mu_{\mathbf q}(\mathcal{B}\symdiff\mathcal{S}_j)
 \bigr\}
 \leq c\varepsilon.
 $$
\end{theorem}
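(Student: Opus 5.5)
We will prove Theorem~\ref{thm:main-stability} in two stages, following the strategy indicated in the abstract: a semidefinite (spectral) estimate showing that $\mathcal A$ and $\mathcal B$ have most of their Fourier weight on level $\le 1$, followed by a one-coordinate approximation theorem (a biased Friedgut--Kalai--Naor type statement) that converts this into closeness to a dictator. First we reduce to increasing families. Replacing $\mathcal A,\mathcal B$ by their up-closures keeps them cross-intersecting and only increases the measures. The symmetric difference with $\mathcal S_j$ of the originals is controlled by that of the up-closures plus the measure lost, which is $O(\varepsilon)$. We may also assume $\varepsilon\le\varepsilon_0(p_1,q_1)$, since for larger $\varepsilon$ the claim is trivial with $c\ge 1/\varepsilon_0$. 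Note that $\mu_{\mathbf p}(\mathcal A)\mu_{\mathbf q}(\mathcal B)\ge(1-\varepsilon)p_1q_1$ together with Theorem~\ref{thm:main-product} forces $\mu_{\mathbf p}(\mathcal A)=\Theta(p_1)$ and $\mu_{\mathbf q}(\mathcal B)=\Theta(q_1)$. This holds because each of the two measures is at most $1$ and their product is at most $p_1q_1$.

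\textbf{Spectral step.} Work in the orthonormal bases $\chi_S^{\mathbf p}=\prod_{i\in S}(x_i-p_i)/\sqrt{p_i(1-p_i)}$ and $\chi_S^{\mathbf q}$. Cross-intersection means $\langle f, K g\rangle=0$ for $f=1_{\mathcal A}$, $g=1_{\mathcal B}$, where $K$ is the bipartite disjointness operator. This operator tensorizes: on coordinate $i$ it is the $2\times2$ matrix with singular value $1$ on constants and singular value $-\sqrt{p_iq_i/((1-p_i)(1-q_i))}$ on the nonconstant part. So $K\chi^{\mathbf q}_S=\prod_{i\in S}\lambda_i\,\chi^{\mathbf p}_S$ with $\lambda_i=-\sqrt{p_iq_i/((1-p_i)(1-q_i))}$. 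Write $\alpha=\mu_{\mathbf p}(\mathcal A)$, $\beta=\mu_{\mathbf q}(\mathcal B)$, $\lambda=\max_i|\lambda_i|=|\lambda_1|$; here the common-maximum hypothesis is used. Then
\[
0=\alpha\beta+\sum_{i}\lambda_i\hat f(i)\hat g(i)+\sum_{|S|\ge2}\lambda_S\hat f(S)\hat g(S).
\]
Bounding the middle term by $\lambda\|f^{=1}\|\|g^{=1}\|$ and the tail by $\lambda^2\|f^{\ge2}\|\|g^{\ge2}\|$, and using $\|f^{\ge1}\|^2=\alpha(1-\alpha)$, we get an inequality of the form
\[
\alpha\beta\le\lambda\sqrt{\alpha(1-\alpha)\beta(1-\beta)}-(\lambda-\lambda^2)\,W,
\]
where $W$ is comparable to $\|f^{\ge2}\|^2+\|g^{\ge2}\|^2$ after an AM--GM balancing. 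Since $\lambda<1$ strictly, because $p_1,q_1<1/2$, the gap $\lambda-\lambda^2$ is a positive constant depending only on $p_1,q_1$. Optimizing in $\alpha,\beta$ reproduces the bound $\alpha\beta\le p_1q_1$, with equality only at $\alpha=p_1,\beta=q_1$ (this is the Hoffman-type bound of \cite{STT}). Near-equality then forces $\|f^{\ge2}\|^2,\|g^{\ge2}\|^2=O(\varepsilon)$. The assumption $q_1\le p_1$ is used here to fix the optimization asymmetry and to show $|\alpha-p_1|,|\beta-q_1|=O(\sqrt\varepsilon)$; this should be made quantitative carefully.

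\textbf{Approximation step, and the main obstacle.} A Boolean function with $\|f^{\ge2}\|^2\le\delta$ under a product measure with biases bounded away from $0$ and $1$ is, by a biased FKN theorem, $O(\delta)$-close to a function of at most one coordinate. Here the biases need only be bounded above by $1/2$; small $p_i$ is harmless for the level-1 weight because the relevant dictator has measure $p_i\le p_1$, and we would need a version uniform in small biases. This is the main obstacle: the standard FKN constants degrade as $p_i\to0$. We plan to handle it by a dimension-free argument via the linear part, $f\approx\alpha+\sum_i\hat f(i)\chi_i$, whose Boolean-ness forces a single dominant coefficient. Given closeness to juntas on coordinates $j$ and $j'$, the measure constraints $\alpha\approx p_1$ and $\beta\approx q_1$ together with cross-intersection force $\mathcal A\approx\mathcal S_j$ and $\mathcal B\approx\mathcal S_{j}$ with $j=j'$. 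Indeed, the alternatives (the all/empty families, or different coordinates) violate either the measures or $\alpha\beta\ge(1-\varepsilon)p_1q_1$ by a constant margin. Finally, we upgrade $\sqrt\varepsilon$ to linear. Once $\mathcal A$ is close to $\mathcal S_j$, we write $\mathcal A=\mathcal S_j\cup\mathcal A_0\setminus\mathcal A_1$ and similarly for $\mathcal B$. Cross-intersection makes the added parts of one family force deletions in the other, with multiplicative loss of order $\lambda^{-1}>1$. A direct counting expansion of $\alpha\beta$ then gives $p_1q_1-\alpha\beta\ge c'(\mu(\mathcal A\symdiff\mathcal S_j)+\mu(\mathcal B\symdiff\mathcal S_j))$, which yields the linear bound $c\varepsilon$.
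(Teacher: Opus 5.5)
\textbf{Gap: the spectral step fails when $p_1\neq q_1$.} Everything else in your argument rests on this step.

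\textbf{Why the ratio bound is not sharp.} The inequality you get from the disjointness operator, $\alpha\beta\le\lambda\sqrt{\alpha(1-\alpha)\beta(1-\beta)}$ with $\lambda^2=\frac{p_1q_1}{(1-p_1)(1-q_1)}$, is equivalent to
\[
 \frac{\alpha}{1-\alpha}\cdot\frac{\beta}{1-\beta}\le\frac{p_1}{1-p_1}\cdot\frac{q_1}{1-q_1}.
\]
Maximizing $\alpha\beta$ under this constraint does \emph{not} return $(p_1,q_1)$. In log-odds coordinates, convexity of $u\mapsto\log(1+e^u)$ puts the maximum at $\alpha=\beta$. The maximal value is $p_1q_1/\bigl(\sqrt{p_1q_1}+\sqrt{(1-p_1)(1-q_1)}\bigr)^2$, which is strictly larger than $p_1q_1$ unless $p_1=q_1$.

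For example, take $p_1=2/5$ and $q_1=1/10$. The ratio bound then permits $\alpha\beta\approx0.0458>0.04$. At $(\alpha,\beta)=(1/5,1/5)$, where already $\alpha\beta=p_1q_1$, the right-hand side is about $0.0435$, so the slack has constant size.

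\textbf{Consequences for your plan.} Near-equality $\alpha\beta\ge(1-\varepsilon)p_1q_1$ therefore forces neither $\|f^{\ge2}\|^2+\|g^{\ge2}\|^2=O(\varepsilon)$ nor $(\alpha,\beta)\approx(p_1,q_1)$. Invoking Theorem~\ref{thm:main-product} caps $\alpha\beta$, but it feeds no slack back into the spectral inequality. The hypothesis $q_1\le p_1$ is only a normalization, so it cannot repair this. In particular, this plain ratio bound is not the Suda--Tanaka--Tokushige bound; their semidefinite relaxation is strictly stronger.

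\textbf{A separate problem even when $p_1=q_1$.} Your gap term $(\lambda-\lambda^2)\|f^{\ge2}\|\|g^{\ge2}\|$ is a product, and AM--GM bounds a product from above, not below. To get a usable form, apply Cauchy--Schwarz to $(\hat f(T))_{|T|=1}\oplus(\sqrt{\lambda}\,\hat f(T))_{|T|\ge2}$ and its analogue for $g$. This gives
\[
 \alpha\beta\le\lambda\sqrt{\bigl(\alpha(1-\alpha)-(1-\lambda)\|f^{\ge2}\|^2\bigr)\bigl(\beta(1-\beta)-(1-\lambda)\|g^{\ge2}\|^2\bigr)}.
\]

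\textbf{The missing idea: a non-spectral inequality that is tight at stars.} The paper adds $-\xi_1\Delta_1A_{11}$ and $-\xi_2\Delta_2A_{22}$ to the diagonal blocks of its matrix $S$. It pays for them with $u^{\mathsf T}Zu\ge0$, which holds because the indicator vectors and $Z$ are entrywise nonnegative. This term vanishes at stars because stars are intersecting. The part $\frac{(p-q)q}{2\sqrt{pq}}$ of $\xi_1$ is nonzero exactly when $p\neq q$, and $q\le p$ is what makes $\xi_1\ge0$, so that $Z\ge0$ entrywise.

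With $\eta$ tuned, $M_\emptyset\succeq0$, the singleton blocks are positive semidefinite, and $M_T\succeq\gamma I_2$ for $|T|\ge2$. The identity $u^{\mathsf T}Su+u^{\mathsf T}Zu=\sqrt{pq}-\sqrt{a_1a_2}\le\sqrt{pq}\,\varepsilon$ then gives $W_{>1}=O(\varepsilon)$ directly, without localizing $(\alpha,\beta)$.

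\textbf{After that, the linear rate is automatic.} The bias-free one-coordinate approximation you anticipate yields $O(\varepsilon)$-closeness outright. The paper proves it by the Jendrej--Oleszkiewicz--Wojtaszczyk two-point argument, with error $3201\,W_{>1}/\operatorname{Var}$. So your closing counting expansion is unnecessary, and as stated it is just the local form of the theorem asserted without proof.

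\textbf{Two smaller steps also need real arguments.}
\begin{itemize}
\item The variance lower bound requires $\alpha$ to be bounded away from $1$, not only from $0$. The paper gets this from $\beta\le1-\mu_{\mathbf 1-\mathbf q}(\mathcal A)\le1-\alpha$.
\item Excluding different centres needs more than a measure count. The paper shows that the sections of $\mathcal A$ and $\mathcal B$ on $[n]\setminus\{a,b\}$ have measure $1-O(\delta)$, which contradicts Theorem~\ref{thm:main-product}.
\end{itemize}
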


\medskip
\begin{remark}
To verify that Theorem~\ref{thm:main-stability} implies
Conjecture~\ref{conj:stability}, assume the hypotheses of the
conjecture.
First observe that the strict lower-product hypothesis in
Conjecture~\ref{conj:stability}, together with
Theorem~\ref{thm:main-product}, gives
\begin{equation*}
 (1-\varepsilon)p_1q_1
 <
 \mu_{\mathbf p}(\mathcal{A})\mu_{\mathbf q}(\mathcal{B})
 \leq p_1q_1.
\end{equation*}
Hence, $\varepsilon>0$ automatically.
When $\varepsilon<1$, the linear rate is
sharper because $\varepsilon<\sqrt{\varepsilon}$. When
$\varepsilon\geq1$, neither rate is quantitatively informative, since
every symmetric difference measure is at most one.
Since the statement is symmetric in
$(\mathcal{A},\mathbf p, p_1)$ and $(\mathcal{B},\mathbf q, q_1)$, we may assume  $q_1\leq p_1$.
Let $c_1(p_1,q_1)$ be the constant in
Theorem~\ref{thm:main-stability}.
By Theorem~\ref{thm:main-stability},  there exists $j\in[n]$ such that
$$
 \max\bigl\{
 \mu_{\mathbf p}(\mathcal{A}\symdiff\mathcal{S}_j),
 \mu_{\mathbf q}(\mathcal{B}\symdiff\mathcal{S}_j)
 \bigr\}
 \leq c_1(p_1,q_1)\varepsilon.
 $$
Since $\varepsilon>0$, we have $c_1(p_1,q_1)\geq 0$. 
Set
\begin{equation*}
c(p_1,q_1)=\max\{c_1(p_1,q_1), 1\}+1,\
 D=
 \max\bigl\{
 \mu_{\mathbf p}(\mathcal{A}\symdiff\mathcal{S}_j),
 \mu_{\mathbf q}(\mathcal{B}\symdiff\mathcal{S}_j)
 \bigr\}.
\end{equation*}
If $\varepsilon<1$, then
\begin{equation*}
 D\leq c_1(p_1,q_1)\varepsilon
 \leq c_1(p_1,q_1)\sqrt{\varepsilon}
 <c(p_1,q_1)\sqrt{\varepsilon}.
\end{equation*}
If $\varepsilon\geq1$,  then
\begin{equation*}
 D\leq1
 <c(p_1,q_1)\sqrt{\varepsilon}.
\end{equation*}
Thus, Theorem~\ref{thm:main-stability} implies the strict
$c(p_1,q_1)\sqrt{\varepsilon}$ conclusion of
Conjecture~\ref{conj:stability}.
\end{remark}

\medskip
The proof of Theorem~\ref{thm:main-product} combines a monotone
reduction, interpolation between the two product measures, and induction
on coordinate sections. A sharp two-section inequality closes the
induction, while tracking equality through the same argument gives the
complete classification both below and at the boundary.
 For Theorem~\ref{thm:main-stability}, a semidefinite estimate shows that each indicator
function is concentrated on its constant and one-coordinate components,
up to an error of order $c(p_1,q_1)\varepsilon$. We then establish an
elementary one-coordinate approximation lemma for arbitrary product
measures by a self-contained two-point concentration argument, following
the method of Jendrej, Oleszkiewicz and Wojtaszczyk
\cite{JendrejOleszkiewiczWojtaszczyk}. This reduces the problem to two
one-coordinate families, which monotonicity and cross-intersection force
to be stars with the same centre.

The remainder of this paper is organised as follows. In Section \ref{se2}, we prove Theorem  \ref{thm:main-product}.  In Section  \ref{se3}, we prove Theorem \ref{thm:main-stability}. In Section~\ref{se4}, we
present some concluding remarks and an open problem.

\section{Proof of Theorem  \ref{thm:main-product}}\label{se2}

\subsection{Monotone reduction}

Recall that a family $\mathcal{A}\subseteq2^{[n]}$ is  increasing if $A\in\mathcal{A}$ and $A\subseteq B$ imply $B\in\mathcal{A}$.  The \textit{upset} $\mathcal A^\uparrow$ of $\mathcal{A}$ is  defined by
$$
\mathcal{A}^\uparrow=\{B\subseteq[n]:\text{there exists }A\in\mathcal{A}\text{ such that }A\subseteq B\}.
$$
\begin{lemma}\label{lem:upclosure}
If $\mathcal{A}$ and $\mathcal{B}$ are cross-intersecting, then $\mathcal{A}^{\up}$ and $\mathcal{B}^{\up}$ are cross-intersecting. Moreover,
$$
 \mu_{\mathbf p}(\mathcal{A}^{\up})\geq\mu_{\mathbf p}(\mathcal{A}),
 \qquad
 \mu_{\mathbf q}(\mathcal{B}^{\up})\geq\mu_{\mathbf q}(\mathcal{B}).
$$
with equality in either inequality only when the corresponding family is already increasing.
\end{lemma}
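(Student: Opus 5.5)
The plan is to check the three claims in turn, directly from the definitions of the upset and of $\mu_{\mathbf p}$.

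\emph{Cross-intersection.} Take $A'\in\mathcal{A}^{\up}$ and $B'\in\mathcal{B}^{\up}$. By definition there are $A\in\mathcal{A}$ and $B\in\mathcal{B}$ with $A\subseteq A'$ and $B\subseteq B'$. Since $\mathcal{A},\mathcal{B}$ are cross-intersecting, $A\cap B\neq\emptyset$. Then $A'\cap B'\supseteq A\cap B$ is nonempty as well.

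\emph{Monotonicity of measure.} Every $A\in\mathcal{A}$ lies in $\mathcal{A}^{\up}$, since $A\subseteq A$. Hence $\mathcal{A}\subseteq\mathcal{A}^{\up}$, and
$$
\mu_{\mathbf p}(\mathcal{A}^{\up})-\mu_{\mathbf p}(\mathcal{A})=\mu_{\mathbf p}(\mathcal{A}^{\up}\setminus\mathcal{A}),
$$
which is nonnegative because $\mu_{\mathbf p}$ is a sum of nonnegative atom weights. The same argument applies to $\mathcal{B}$ with $\mu_{\mathbf q}$.

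\emph{Equality case.} This is the only point needing any care, and it uses the hypothesis that every $p_i\in(0,1)$. Under that hypothesis each atom has weight
$$
\prod_{i\in S}p_i\prod_{i\notin S}(1-p_i)>0 .
$$
So if $\mu_{\mathbf p}(\mathcal{A}^{\up})=\mu_{\mathbf p}(\mathcal{A})$, then $\mathcal{A}^{\up}\setminus\mathcal{A}=\emptyset$, that is, $\mathcal{A}^{\up}=\mathcal{A}$. A family equal to its own upset is increasing: if $A\in\mathcal{A}$ and $A\subseteq B$, then $B\in\mathcal{A}^{\up}=\mathcal{A}$. Conversely, if $\mathcal{A}$ is increasing then clearly $\mathcal{A}^{\up}=\mathcal{A}$, so equality holds. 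The same reasoning applies verbatim to $\mathcal{B}$ and $\mu_{\mathbf q}$.

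No step is a genuine obstacle. The only place to be careful is positivity of atom weights, which would fail if some $p_i\in\{0,1\}$. That case is excluded, since the paper takes $\mathbf p\in(0,1)^n$ (indeed $(0,\tfrac12]^n$ in Theorem~\ref{thm:main-product}).
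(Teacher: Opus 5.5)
Your proof is correct and follows the paper's argument essentially verbatim. Cross-intersection passes to the upsets through the containments $A\subseteq A'$ and $B\subseteq B'$, and the measure inequalities follow from $\mathcal{A}\subseteq\mathcal{A}^{\up}$; equality forces $\mathcal{A}^{\up}=\mathcal{A}$ because every atom has positive weight, and you additionally spell out why $\mathcal{A}^{\up}=\mathcal{A}$ means $\mathcal{A}$ is increasing, which the paper leaves implicit.
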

\begin{proof}
If $A'\in\mathcal{A}^{\up}$ and $B'\in\mathcal{B}^{\up}$, then there are $A\in\mathcal{A}$ and $B\in\mathcal{B}$ such that $A\subseteq A'$ and $B\subseteq B'$. Since $A\cap B\neq\emptyset$, we  have $A'\cap B'\neq\emptyset$. Thus, the upsets remain cross-intersecting. The measure inequalities follow from the inclusions $\mathcal{A}\subseteq\mathcal{A}^{\up}$ and $\mathcal{B}\subseteq\mathcal{B}^{\up}$, and equality holds only when the containment is an equality because every singleton has positive measure.
\end{proof}

For an increasing family $\mathcal{A}\subseteq2^{[n]}$,  its \textit{transversal} is defined by
$$
 \mathcal{A}^*=\{B\subseteq[n]:B\cap A\neq\emptyset\text{ for every }A\in\mathcal{A}\}.
$$

\begin{lemma}
\label{lem:transversal}
If $\mathcal{A}\subseteq2^{[n]}$ is increasing, then
$
 \mathcal{A}^*=\{B\subseteq[n]:[n]\setminus B\notin\mathcal{A}\}.
$
Consequently, for every probability vector $\mathbf q=(q_1,\ldots,q_n)$,
$$
 \mu_{\mathbf q}(\mathcal{A}^*)
 =1-\mu_{\mathbf 1-\mathbf q}(\mathcal{A}),
$$
where $\mathbf 1-\mathbf q=(1-q_1,\ldots,1-q_n)$.
\end{lemma}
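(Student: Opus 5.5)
The plan is to prove the set identity by double inclusion, and then get the measure formula from complementation.

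For the inclusion $\mathcal{A}^*\subseteq\{B:[n]\setminus B\notin\mathcal{A}\}$, take $B\in\mathcal{A}^*$. If $[n]\setminus B$ were in $\mathcal{A}$, then $B$ would have to meet $[n]\setminus B$. That is impossible, so $[n]\setminus B\notin\mathcal{A}$. This direction does not use monotonicity.

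For the reverse inclusion, suppose $[n]\setminus B\notin\mathcal{A}$ and let $A\in\mathcal{A}$. If $A\cap B=\emptyset$, then $A\subseteq[n]\setminus B$. Since $\mathcal{A}$ is increasing, this gives $[n]\setminus B\in\mathcal{A}$, a contradiction. Hence $B$ meets every member of $\mathcal{A}$, so $B\in\mathcal{A}^*$.

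For the measure identity, let $X$ be the $\mathbf q$-random set, so coordinate $i$ is included independently with probability $q_i$. The complement $[n]\setminus X$ then includes coordinate $i$ independently with probability $1-q_i$, so it is distributed as a $(\mathbf 1-\mathbf q)$-random set. Equivalently, the map $B\mapsto[n]\setminus B$ carries the weight $\prod_{i\in B}q_i\prod_{i\notin B}(1-q_i)$ to the $(\mathbf 1-\mathbf q)$-weight of the complement. Using the identity just proved,
$$
\mu_{\mathbf q}(\mathcal{A}^*)=\mathbb P([n]\setminus X\notin\mathcal{A})=1-\mu_{\mathbf 1-\mathbf q}(\mathcal{A}).
$$
The only point needing care is that monotonicity is used exactly once, in the reverse inclusion; otherwise the argument is routine.
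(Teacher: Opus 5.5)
Your proposal is correct and follows essentially the same route as the paper: the set identity via the two inclusions (with monotonicity used only to show that a set disjoint from some member of $\mathcal{A}$ has its complement in $\mathcal{A}$), then the observation that $[n]\setminus X$ is $(\mathbf 1-\mathbf q)$-distributed when $X$ is $\mathbf q$-distributed. The paper argues the contrapositive ($B\notin\mathcal{A}^*\iff[n]\setminus B\in\mathcal{A}$) rather than direct double inclusion, but this is only a presentational difference.
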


\begin{proof}
If $B\notin\mathcal{A}^*$, then $B\cap A=\emptyset$ for some $A\in\mathcal{A}$, Hence, $A\subseteq[n]\setminus B$. Since $\mathcal{A}$ is increasing, $[n]\setminus B\in\mathcal{A}$. Conversely, if $[n]\setminus B\in\mathcal{A}$, then this member of $\mathcal{A}$ is disjoint from $B$. Thus, $B\notin\mathcal{A}^*$. This proves $
 \mathcal{A}^*=\{B\subseteq[n]:[n]\setminus B\notin\mathcal{A}\}.
$
If $X$ has distribution $\mu_{\mathbf q}$, then $[n]\setminus X$ has distribution $\mu_{\mathbf 1-\mathbf q}$. Therefore, we have
$
 \mu_{\mathbf q}(\mathcal{A}^*)=\mathbb P\bigl([n]\setminus X\notin\mathcal{A}\bigr)=1-\mathbb P\bigl([n]\setminus X\in\mathcal{A}\bigr)=1-\mu_{\mathbf 1-\mathbf q}(\mathcal{A}).
$
\end{proof}

If $\mathcal{A}$ and $\mathcal{B}$ are increasing and cross-intersecting, then
$
 \mathcal{B}\subseteq\mathcal{A}^*.
$
By Lemma~\ref{lem:transversal}, we have
$$
 \mu_{\mathbf p}(\mathcal{A})\mu_{\mathbf q}(\mathcal{B})
 \leq
 \mu_{\mathbf p}(\mathcal{A})
 \bigl(1-\mu_{\mathbf 1-\mathbf q}(\mathcal{A})\bigr).
 $$
Therefore, the cross-intersecting pair problem in Theorem \ref{thm:main-product}  reduces to the separation inequality 
\begin{equation}
 \mu_{\mathbf p}(\mathcal{A})
 \bigl(1-\mu_{\mathbf 1-\mathbf q}(\mathcal{A})\bigr)
 \leq \max_{i\in[n]}p_iq_i,
 \label{eq:separation}
\end{equation}
which will be
proved
in Subsection~\ref{sec:product-proof}.
\subsection{Comparison of the  measures}

For $\mathcal{A}\subseteq2^{[n]}$ and $i\in[n]$, define the \textit{$i$-influential set} of $\mathcal{A}$ by
$$
        \mathcal{I}_i(\mathcal{A})
        =
        \{S\subseteq[n]:|\{S,S\triangle\{i\}\}\cap\mathcal{A}|=1\}.
$$
For $\mathbf p=(p_1,\ldots,p_n)\in(0,1)^n$ and $i\in[n]$, write
$\mathbf p_{-i}$ for the vector obtained by deleting $p_i$,  and define the \textit{$i$-influence} under $\mu_{\mathbf p}$ by
\begin{equation*}
 I_i^{\mathbf p}(\mathcal{A})
 =
 \mu_{\mathbf p}\bigl(\mathcal{I}_i(\mathcal{A})\bigr).
\end{equation*}
When $\mathbf p=(p,\ldots,p)$, this is the usual $p$-biased influence
$I_i^p(\mathcal{A})$.
For later use, let
\begin{equation*}
 \mathcal{A}(\bar{i})
 =
 \{S\subseteq[n]\setminus\{i\}:S\in\mathcal{A}\},
 \qquad
 \mathcal{A}(i)
 =
 \{S\subseteq[n]\setminus\{i\}:S\cup\{i\}\in\mathcal{A}\}.
\end{equation*}
If $\mathcal{A}$ is increasing, then
$ \mathcal{A}(\bar{i})\subseteq \mathcal{A}(i)$, and
\begin{equation*}
 I_i^{\mathbf p}(\mathcal{A})
 =
\mu_{\mathbf p_{-i}}
 \bigl( \mathcal{A}(i)\setminus \mathcal{A}(\bar{i})\bigr).
\end{equation*}
Indeed, for every
$S\in \mathcal{A}(i)\setminus \mathcal{A}(\bar{i})$, both $S$ and
$S\cup\{i\}$ belong to $\mathcal{I}_i(\mathcal{A})$. Conversely, let $S\in\mathcal{I}_i(\mathcal{A})$ and put
$T=S\setminus\{i\}$. Exactly one of $T$ and $T\cup\{i\}$ belongs to
$\mathcal{A}$. Since $\mathcal{A}$ is increasing, it is impossible
that $T\in\mathcal{A}$ but $T\cup\{i\}\notin\mathcal{A}$. Thus,
$T\notin\mathcal{A}$ and $T\cup\{i\}\in\mathcal{A}$, which means that
$
 T\in\mathcal{A}(i)\setminus\mathcal{A}(\bar{i}).
$
Consequently,
\begin{align*}
 I_i^{\mathbf p}(\mathcal{A})
 &=
(1- p_i)
 \mu_{\mathbf p_{-i}}
 \bigl(\mathcal{A}(i)\setminus\mathcal{A}(\bar{i})\bigr)
 +
p_i
 \mu_{\mathbf p_{-i}}
 \bigl(\mathcal{A}(i)\setminus\mathcal{A}(\bar{i})\bigr)\\
 &=
 \mu_{\mathbf p_{-i}}
 \bigl(\mathcal{A}(i)\setminus\mathcal{A}(\bar{i})\bigr).
\end{align*}

The following lemma extends the Margulis--Russo formula (see Theorem 2.2 in \cite{E24}) to product
measures with non-identical coordinate probabilities.
When $r_i(t)=t$ for every $i$, it reduces to the classical
Margulis--Russo formula.

\begin{lemma}\label{lem:margulis-russo}
Let $\mathcal{A}\subseteq2^{[n]}$ be increasing, and let
$\mathbf r(t)=(r_1(t),\ldots,r_n(t))\in(0,1)^n$. Then
\begin{equation*}
 \frac{\mathrm d}{\mathrm dt}
 \mu_{\mathbf r(t)}(\mathcal{A})
 =
 \sum_{i=1}^n
 r_i'(t)I_i^{\mathbf r(t)}(\mathcal{A}).
\end{equation*}
\end{lemma}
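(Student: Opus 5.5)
The plan is to write $\mu_{\mathbf r(t)}(\mathcal{A})$ as a multilinear polynomial in the coordinate probabilities and apply the chain rule. Define
\[
F(x_1,\ldots,x_n)=\sum_{A\in\mathcal{A}}\prod_{i\in A}x_i\prod_{i\notin A}(1-x_i),
\]
so that $\mu_{\mathbf r(t)}(\mathcal{A})=F(\mathbf r(t))$. The chain rule gives
\[
\frac{\mathrm d}{\mathrm dt}\mu_{\mathbf r(t)}(\mathcal{A})=\sum_{i=1}^n r_i'(t)\,\frac{\partial F}{\partial x_i}(\mathbf r(t)).
\]
Here we implicitly assume each $r_i$ is differentiable, which is the only meaningful reading of the statement. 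It therefore suffices to show that $\partial F/\partial x_i(\mathbf x)=I_i^{\mathbf x}(\mathcal{A})$ for every $\mathbf x\in(0,1)^n$.

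For this identity, fix $i$ and split the sum according to whether $i\in A$, using the sections $\mathcal{A}(i)$ and $\mathcal{A}(\bar i)$ defined above:
\[
F(\mathbf x)=x_i\,\mu_{\mathbf x_{-i}}\bigl(\mathcal{A}(i)\bigr)+(1-x_i)\,\mu_{\mathbf x_{-i}}\bigl(\mathcal{A}(\bar i)\bigr).
\]
The two section measures do not depend on $x_i$, so
\[
\frac{\partial F}{\partial x_i}=\mu_{\mathbf x_{-i}}\bigl(\mathcal{A}(i)\bigr)-\mu_{\mathbf x_{-i}}\bigl(\mathcal{A}(\bar i)\bigr).
\]
Since $\mathcal{A}$ is increasing, $\mathcal{A}(\bar i)\subseteq\mathcal{A}(i)$. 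The difference therefore equals $\mu_{\mathbf x_{-i}}\bigl(\mathcal{A}(i)\setminus\mathcal{A}(\bar i)\bigr)$. By the computation preceding the lemma, this is exactly $I_i^{\mathbf x}(\mathcal{A})$.

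Substituting $\mathbf x=\mathbf r(t)$ completes the argument. No step is genuinely hard. The only point needing care is the decomposition of $F$ by the $i$-th coordinate and the use of monotonicity to turn the difference of section measures into the measure of the set difference. Already-verified identity for $I_i^{\mathbf p}$ in terms of sections handles the rest.
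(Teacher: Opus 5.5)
Your proof is correct and follows the paper's argument essentially step for step. You write $\mu_{\mathbf x}(\mathcal{A})$ as a function of $\mathbf x$, condition on coordinate $i$ to express $\partial F/\partial x_i$ as the difference of the two section measures, use $\mathcal{A}(\bar i)\subseteq\mathcal{A}(i)$ to identify this with $I_i^{\mathbf x}(\mathcal{A})$, and conclude with the chain rule (your remark that differentiability of each $r_i$ is tacitly assumed applies equally to the paper).
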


\begin{proof}
For $\mathbf x=(x_1,\ldots,x_n)\in(0,1)^n$, define
$
 H(\mathbf x)=\mu_{\mathbf x}(\mathcal{A}).
$
By the definition of the measure, $H$ is 
differentiable.
 Conditioning on whether coordinate $i$
is present gives
\begin{equation*}
 H(\mathbf x)
 =
 (1-x_i)
 \mu_{\mathbf x_{-i}}\bigl(\mathcal{A}(\bar{i})\bigr)
 +
 x_i
 \mu_{\mathbf x_{-i}}\bigl(\mathcal{A}(i)\bigr).
\end{equation*}
 Hence, we have
\begin{equation*}
 \frac{\partial H(\mathbf x)}{\partial x_i}
 =
 \mu_{\mathbf x_{-i}}\bigl(\mathcal{A}(i)\bigr)
 -
 \mu_{\mathbf x_{-i}}\bigl(\mathcal{A}(\bar{i})\bigr).
\end{equation*}
Since $\mathcal{A}$ is increasing,
$
 \mathcal{A}(\bar{i})\subseteq\mathcal{A}(i).
$
Therefore,
\begin{align*}
 \frac{\partial H(\mathbf x)}{\partial x_i}
=
 \mu_{\mathbf x_{-i}}
 \bigl(\mathcal{A}(i)\setminus\mathcal{A}(\bar{i})\bigr)=
 I_i^{\mathbf x}(\mathcal{A}).
\end{align*}
Finally, applying the multivariable chain rule to
$H(\mathbf r(t))$ gives
\begin{align*}
 \frac{\mathrm d}{\mathrm dt}
 \mu_{\mathbf r(t)}(\mathcal{A})
 =
 \sum_{i=1}^n
 \frac{\partial H(\mathbf r(t))}{\partial r_i(t)}r_i'(t)=
 \sum_{i=1}^n
 r_i'(t)I_i^{\mathbf r(t)}(\mathcal{A}),
\end{align*}
as required.
\end{proof}

The following conditional-variance form of the Efron--Stein inequality
will be used;  see 
\cite[Theorem~3.1]{BLM}.

\begin{lemma}\label{lem:efron-stein}
Let $\mathbf X=(X_1,\ldots,X_n)$ be a vector of independent random
variables, and let $Z=f(\mathbf X)$ be a real-valued random variable
satisfying
$
 \mathbb E[Z^2]<\infty.
$
For each $i\in[n]$, write
$
 \mathbf X_{-i}
 =
 (X_1,\ldots,X_{i-1},X_{i+1},\ldots,X_n).
$
Then
$$
 \operatorname{Var}(Z)
 \leq
 \sum_{i=1}^n
 \mathbb E\left[
  \operatorname{Var}(Z\mid\mathbf X_{-i})
 \right].
$$
\end{lemma}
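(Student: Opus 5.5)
The plan is to induct on the number of coordinates $n$. The first step is a one-coordinate variance decomposition; the induction then tensorizes it. Write $\mathbf X_{<n}=(X_1,\ldots,X_{n-1})$. By the law of total variance,
\begin{equation*}
 \operatorname{Var}(Z)=\mathbb E\bigl[\operatorname{Var}(Z\mid\mathbf X_{-n})\bigr]+\operatorname{Var}\bigl(\mathbb E[Z\mid\mathbf X_{-n}]\bigr).
\end{equation*}
Put $W=\mathbb E[Z\mid\mathbf X_{<n}]$. This is a function of the $n-1$ independent variables $X_1,\ldots,X_{n-1}$, and $\mathbb E[W^2]\leq\mathbb E[Z^2]<\infty$ by conditional Jensen. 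The case $n=1$ is trivial, since $\operatorname{Var}(Z\mid\mathbf X_{-1})=\operatorname{Var}(Z)$ when $\mathbf X_{-1}$ is empty. So the induction hypothesis applied to $W$ gives
\begin{equation*}
 \operatorname{Var}(W)\leq\sum_{i=1}^{n-1}\mathbb E\bigl[\operatorname{Var}(W\mid \mathbf X_{<n,-i})\bigr],
\end{equation*}
where $\mathbf X_{<n,-i}$ denotes $\mathbf X_{<n}$ with $X_i$ removed.

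Next I decompose $\operatorname{Var}(Z)$ through $\mathbf X_{<n}$ rather than $\mathbf X_{-n}$. Since $\mathbf X_{-n}=\mathbf X_{<n}$, the total-variance decomposition above reads $\operatorname{Var}(Z)=\mathbb E[\operatorname{Var}(Z\mid\mathbf X_{<n})]+\operatorname{Var}(W)$. The first term is exactly the $i=n$ summand of the target bound. It therefore remains to show, for each $i<n$,
\begin{equation*}
 \mathbb E\bigl[\operatorname{Var}(W\mid\mathbf X_{<n,-i})\bigr]\leq\mathbb E\bigl[\operatorname{Var}(Z\mid\mathbf X_{-i})\bigr].
\end{equation*}

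This comparison is the key step and the main obstacle. I plan to prove it by conditional Jensen, using independence. Fix $\mathbf X_{<n,-i}$; by independence, conditioning on it leaves $X_i$ and $X_n$ independent. Then $W=\mathbb E[Z\mid X_i,\mathbf X_{<n,-i}]$ is the average over $X_n$ of $Z$. The map $g\mapsto\operatorname{Var}_{X_i}(g)$ is convex: it equals $\mathbb E_{X_i}[g^2]-(\mathbb E_{X_i}g)^2$, or equivalently a supremum of linear functionals, or $\tfrac12\mathbb E(g(X_i)-g(X_i'))^2$ with $X_i'$ an independent copy. Jensen over $X_n$ then gives
\begin{equation*}
 \operatorname{Var}_{X_i}\bigl(\mathbb E_{X_n}Z\bigr)\leq\mathbb E_{X_n}\operatorname{Var}_{X_i}(Z).
\end{equation*}
The cleanest form is the copy representation: $\tfrac12\mathbb E_{X_i,X_i'}\bigl(\mathbb E_{X_n}[Z-Z']\bigr)^2\leq\tfrac12\mathbb E_{X_i,X_i',X_n}(Z-Z')^2$, where $Z'$ is $Z$ with $X_i$ replaced by $X_i'$. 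Taking expectation over $\mathbf X_{<n,-i}$ yields the required inequality, since $\operatorname{Var}(Z\mid\mathbf X_{-i})$ is precisely the variance in $X_i$ with all other coordinates, including $X_n$, fixed. The independence of the coordinates is what justifies Fubini and the product structure in this step.

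Summing the comparison over $i<n$ and adding the $i=n$ term closes the induction. Measurability and finiteness are routine because all quantities are dominated by $\mathbb E[Z^2]$. Alternatively, one can cite \cite[Theorem~3.1]{BLM} directly, since the statement is exactly the Efron--Stein inequality.
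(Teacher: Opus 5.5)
Your proof is correct, but it is not the paper's route. The paper gives no argument here and simply cites \cite[Theorem~3.1]{BLM}, where the inequality is proved in one pass via the Doob martingale decomposition. Writing $Z-\mathbb E Z=\sum_i\Delta_i$ with $\Delta_i=\mathbb E[Z\mid X_1,\ldots,X_i]-\mathbb E[Z\mid X_1,\ldots,X_{i-1}]$:
\begin{itemize}
\item orthogonality of the increments gives $\operatorname{Var}(Z)=\sum_i\mathbb E[\Delta_i^2]$;
\item independence gives $\Delta_i=\mathbb E\bigl[Z-\mathbb E[Z\mid\mathbf X_{-i}]\bigm|X_1,\ldots,X_i\bigr]$;
\item conditional Jensen then bounds $\mathbb E[\Delta_i^2]$ by $\mathbb E[\operatorname{Var}(Z\mid\mathbf X_{-i})]$.
\end{itemize}

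You instead tensorize by induction on $n$. The law of total variance produces the $i=n$ term exactly. The rest reduces to the convexity statement $\operatorname{Var}_{X_i}(\mathbb E_{X_n}Z)\le\mathbb E_{X_n}\operatorname{Var}_{X_i}(Z)$, which your copy representation $\tfrac12\mathbb E(Z-Z')^2$ with a shared $X_n$, plus Jensen, proves cleanly. One small correction: in your parenthetical, variance is a supremum of affine functionals, not linear ones, but nothing in your argument depends on this.

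The martingale route avoids the induction and the Doob--Dynkin bookkeeping for $W$, and it yields the exact identity $\operatorname{Var}(Z)=\sum_i\mathbb E[\Delta_i^2]$ along the way. Your route isolates the only two ingredients actually used: one-coordinate total variance and convexity of $\operatorname{Var}_{X_i}$. It is also the same scheme that proves subadditivity of entropy.

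For the paper's sole application (Lemma~\ref{lem:efron-stein-influence}, on a finite Bernoulli product space), either proof is more than enough. In fact, with the product basis $v_{\mathbf r}(T)$ of Section~\ref{se3} one has $\mathbb E[\operatorname{Var}(f\mid\mathbf X_{-i})]=\sum_{T\ni i}\widehat f(T)^2$. The inequality then just says $\sum_{T\neq\emptyset}\widehat f(T)^2\le\sum_{T}|T|\,\widehat f(T)^2$.
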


\begin{lemma}\label{lem:efron-stein-influence}
Let $\mathbf p=(p_1,\ldots,p_n)\in(0,1)^n$, and let
$\mathcal{A}\subseteq2^{[n]}$ be increasing. Then
$$
 \mu_{\mathbf p}(\mathcal{A})
 \bigl(1-\mu_{\mathbf p}(\mathcal{A})\bigr)
 \leq
 \sum_{i=1}^n
 p_i(1-p_i)I_i^{\mathbf p}(\mathcal{A}).
 $$
\end{lemma}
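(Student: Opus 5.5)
We apply Lemma~\ref{lem:efron-stein} to the indicator $Z=\mathbf 1[X\in\mathcal{A}]$, where $X\sim\mu_{\mathbf p}$ is encoded by independent Bernoulli coordinates $X_i\sim\mathrm{Bern}(p_i)$. Then $\mathbb E[Z^2]\le 1<\infty$, and since $Z$ is $\{0,1\}$-valued,
\begin{equation*}
 \operatorname{Var}(Z)=\mu_{\mathbf p}(\mathcal{A})\bigl(1-\mu_{\mathbf p}(\mathcal{A})\bigr).
\end{equation*}
So the left-hand side of the claim is exactly the variance bounded by Lemma~\ref{lem:efron-stein}. It remains to compute each conditional variance $\mathbb E[\operatorname{Var}(Z\mid \mathbf X_{-i})]$.

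Fix $i$ and condition on $\mathbf X_{-i}$, which determines a set $S\subseteq[n]\setminus\{i\}$. Given $S$, the variable $Z$ takes the value $\mathbf 1[S\in\mathcal{A}(i)]$ with probability $p_i$ and $\mathbf 1[S\in\mathcal{A}(\bar i)]$ with probability $1-p_i$. A two-point random variable taking values $a$ and $b$ with these probabilities has variance $p_i(1-p_i)(a-b)^2$. Because $\mathcal{A}$ is increasing, $\mathcal{A}(\bar i)\subseteq\mathcal{A}(i)$, so $(a-b)^2=\mathbf 1[S\in\mathcal{A}(i)\setminus\mathcal{A}(\bar i)]$. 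Hence
\begin{equation*}
 \operatorname{Var}(Z\mid\mathbf X_{-i})=p_i(1-p_i)\,\mathbf 1\bigl[S\in\mathcal{A}(i)\setminus\mathcal{A}(\bar i)\bigr].
\end{equation*}

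Since $S$ is distributed according to $\mu_{\mathbf p_{-i}}$, taking expectations gives
\begin{equation*}
 \mathbb E[\operatorname{Var}(Z\mid\mathbf X_{-i})]=p_i(1-p_i)\,\mu_{\mathbf p_{-i}}\bigl(\mathcal{A}(i)\setminus\mathcal{A}(\bar i)\bigr)=p_i(1-p_i)I_i^{\mathbf p}(\mathcal{A}),
\end{equation*}
where the last equality is the identity for increasing families established just before Lemma~\ref{lem:margulis-russo}. Summing over $i\in[n]$ and inserting into Lemma~\ref{lem:efron-stein} yields
\begin{equation*}
 \mu_{\mathbf p}(\mathcal{A})\bigl(1-\mu_{\mathbf p}(\mathcal{A})\bigr)\le\sum_{i=1}^n p_i(1-p_i)I_i^{\mathbf p}(\mathcal{A}),
\end{equation*}
which is the claim.

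The argument has no real obstacle. The step needing care is the two-point variance computation: this is where monotonicity enters, since it guarantees that the sections $\mathcal{A}(i)$ and $\mathcal{A}(\bar i)$ differ exactly on the influential part $\mathcal{A}(i)\setminus\mathcal{A}(\bar i)$. It also lets us use the previously derived expression for $I_i^{\mathbf p}(\mathcal{A})$ directly.
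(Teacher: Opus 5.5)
Your proposal is correct and follows the paper's proof essentially step for step. Like the paper, you apply the Efron--Stein inequality to the indicator of $\mathcal{A}$ and compute each conditional variance as the two-point variance $p_i(1-p_i)\,\mathbf 1\bigl[S\in\mathcal{A}(i)\setminus\mathcal{A}(\bar i)\bigr]$, using monotonicity. You then identify its expectation with $p_i(1-p_i)I_i^{\mathbf p}(\mathcal{A})$ via the section formula for the influence of an increasing family.
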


\begin{proof}
Let $X_1,\ldots,X_n$ be independent Bernoulli random variables with
\begin{equation*}
 \mathbb P(X_i=1)=p_i,
 \qquad
 \mathbb P(X_i=0)=1-p_i,
\end{equation*}
and write
$
 \mathbf X=(X_1,\ldots,X_n).
$
We identify $\mathbf X$ with the random subset
$
 \{j\in[n]:X_j=1\},
$
which has distribution $\mu_{\mathbf p}$. Let
$
 f=\mathbf 1_{\mathcal{A}}
$
denote the indicator function. Then
$
 f(\mathbf X)^2=f(\mathbf X)\leq 1. 
$
Consequently,
\begin{align*}
 \operatorname{Var}(f(\mathbf X))
 &=
 \mathbb E[f(\mathbf X)^2]
 -
 \mathbb E[f(\mathbf X)]^2=
 \mu_{\mathbf p}(\mathcal{A})
 -
 \mu_{\mathbf p}(\mathcal{A})^2\\
 &=
 \mu_{\mathbf p}(\mathcal{A})
 \bigl(1-\mu_{\mathbf p}(\mathcal{A})\bigr).
\end{align*}
By Lemma \ref{lem:efron-stein}, we have
$$
 \mu_{\mathbf p}(\mathcal{A})
 \bigl(1-\mu_{\mathbf p}(\mathcal{A})\bigr)=\operatorname{Var}(f(\mathbf X))
 \leq
 \sum_{i=1}^n
 \mathbb E\left[
 \operatorname{Var}
 \bigl(f(\mathbf X)\mid\mathbf X_{-i}\bigr)
 \right],
$$
where $\mathbf X_{-i}$ denotes the vector obtained from $\mathbf X$ by
deleting its $i$th coordinate.

Fix
$i\in[n]$ and condition on
$
 \mathbf X_{-i}=S
$ 
and
$
 S\subseteq[n]\setminus\{i\}.
$
Since the coordinates are independent, the conditional distribution of
$X_i$ is still Bernoulli with parameter $p_i$. Hence,
\begin{equation*}
 f(\mathbf X)
 =
 \begin{cases}
  f(S),&\text{with probability }1-p_i,\\
  f(S\cup\{i\}),&\text{with probability }p_i.
 \end{cases}
\end{equation*}
It follows that
\begin{equation*}
 \mathbb E\bigl[
 f(\mathbf X)\mid\mathbf X_{-i}=S
 \bigr]
 =
 (1-p_i)f(S)+p_i f(S\cup\{i\})
\end{equation*}
and
\begin{equation*}
 \mathbb E\bigl[
 f(\mathbf X)^2\mid\mathbf X_{-i}=S
 \bigr]
 =
 (1-p_i)f(S)^2+p_i f(S\cup\{i\})^2.
\end{equation*}
Therefore,
\begin{align*}
 \operatorname{Var}
 \bigl(f(\mathbf X)\mid\mathbf X_{-i}=S\bigr)&=
 (1-p_i)f(S)^2+p_i f(S\cup\{i\})^2-
 \bigl((1-p_i)f(S)+p_i f(S\cup\{i\})\bigr)^2\\
 &=
 p_i(1-p_i)
 \bigl(
 f(S)^2+f(S\cup\{i\})^2
 -2f(S)f(S\cup\{i\})
 \bigr)\\
 &=
 p_i(1-p_i)
 \bigl(f(S\cup\{i\})-f(S)\bigr)^2.
 \label{eq:conditional-variance-calculation}
\end{align*}
Because $\mathcal{A}$ is increasing, 
$
 f(S)\leq f(S\cup\{i\}).
$
Since both values belong to $\{0,1\}$, we have
\begin{equation*}
 \bigl(f(S\cup\{i\})-f(S)\bigr)^2
 =
 \begin{cases}
  1,
  &S\in\mathcal{A}(i)\setminus\mathcal{A}(\bar{i}),\\
  0,
  &S\notin\mathcal{A}(i)\setminus\mathcal{A}(\bar{i}).
 \end{cases}
\end{equation*}
Equivalently,
\begin{equation*}
 \bigl(f(S\cup\{i\})-f(S)\bigr)^2
 =
 \mathbf 1_{\mathcal{A}(i)\setminus
 \mathcal{A}(\bar{i})}(S).
\end{equation*}
Thus,
\begin{equation*}
 \operatorname{Var}
 \bigl(f(\mathbf X)\mid\mathbf X_{-i}=S\bigr)
 =
 p_i(1-p_i)
 \mathbf 1_{\mathcal{A}(i)\setminus
 \mathcal{A}(\bar{i})}(S).
\end{equation*}
The random set $\mathbf X_{-i}$ has distribution
$\mu_{\mathbf p_{-i}}$. Taking expectation over $\mathbf X_{-i}$,
we obtain
\begin{align*}
 \mathbb E\left[
 \operatorname{Var}
 \bigl(f(\mathbf X)\mid\mathbf X_{-i}\bigr)
 \right]&=
 p_i(1-p_i)
 \sum_{S\subseteq[n]\setminus\{i\}}
 \mu_{\mathbf p_{-i}}(S)
 \mathbf 1_{\mathcal{A}(i)\setminus
 \mathcal{A}(\bar{i})}(S)\\
 &\quad=
 p_i(1-p_i)
 \mu_{\mathbf p_{-i}}
 \bigl(\mathcal{A}(i)\setminus\mathcal{A}(\bar{i})\bigr)\\
 &\quad=
 p_i(1-p_i)I_i^{\mathbf p}(\mathcal{A}).
 \label{eq:conditional-variance-influence}
\end{align*}
It follows that
$$
 \mu_{\mathbf p}(\mathcal{A})
 \bigl(1-\mu_{\mathbf p}(\mathcal{A})\bigr)
 \leq
 \sum_{i=1}^n
 p_i(1-p_i)I_i^{\mathbf p}(\mathcal{A}),
 $$
as required.
\end{proof}

Let $\mathbf p=(p_1,\ldots,p_n), \mathbf q=(q_1,\ldots,q_n)\in(0,\frac{1}{2})^n$.
For $t\in(0,1)$, define its \textit{odds} and \textit{log-odds} respectively by
\begin{equation*}
 \frac{t}{1-t}
 \qquad\text{and}\qquad
 \log\frac{t}{1-t},
\end{equation*}
where  $\log$ denotes the natural logarithm.
The advantage of the log-odds coordinate is that the
passage from $p_i$ to $1-q_i$ becomes additive.
For each $i\in[n]$, set
\begin{equation*}
 d_i=p_iq_i,
 \qquad
 K_i=\frac{(1-p_i)(1-q_i)}{p_iq_i},
 \qquad
 K=\min_{i\in[n]}K_i.
\end{equation*}
Since $p_i+q_i<1$, we have $K_i>1$ for every $i\in[n]$.
Combining the generalized Margulis--Russo formula with the preceding
variance estimate yields the following comparison between the measures
of an increasing family under $\mu_{\mathbf p}$ and
$\mu_{\mathbf{1-\mathbf q}}$.

\begin{lemma}\label{lem:logodds}
Let $\mathbf p=(p_1,\ldots,p_n), \mathbf q=(q_1,\ldots,q_n)\in(0,\frac{1}{2})^n$.
Let $\mathcal{A}\subseteq2^{[n]}$ be increasing, and put
\begin{equation*}
 \alpha=\mu_{\mathbf p}(\mathcal{A}),
 \qquad
 \beta=\mu_{\mathbf 1-\mathbf q}(\mathcal{A}).
\end{equation*}
If $0<\alpha<1$, then
\begin{equation*}
 \frac{\beta}{1-\beta}
 \geq
 K\frac{\alpha}{1-\alpha}.
\end{equation*}
Equivalently,
\begin{equation*}
 1-\beta\leq h_K(\alpha),
 \qquad
 h_K(t)=\frac{1-t}{1+(K-1)t}.
\end{equation*}
The latter inequality also holds when $\alpha\in\{0,1\}$.
\end{lemma}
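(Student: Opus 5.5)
We interpolate between $\mathbf p$ and $\mathbf 1-\mathbf q$ along a straight line in log-odds coordinates, and show that the log-odds of $\mu_{\mathbf r(t)}(\mathcal{A})$ grows at least as fast as the slowest coordinate.

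\textbf{The path.} For $t\in[0,1]$ define $r_i(t)$ by
\begin{equation*}
 \log\frac{r_i(t)}{1-r_i(t)}=(1-t)\log\frac{p_i}{1-p_i}+t\log\frac{1-q_i}{q_i}.
\end{equation*}
Then $r_i(0)=p_i$ and $r_i(1)=1-q_i$. Writing $\lambda_i=\log K_i>0$, the log-odds of $r_i(t)$ increases at the constant rate $\lambda_i$. Differentiating the log-odds gives
\begin{equation*}
 r_i'(t)=\lambda_i\,r_i(t)\bigl(1-r_i(t)\bigr).
\end{equation*}

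\textbf{The derivative of the measure.} Set $F(t)=\mu_{\mathbf r(t)}(\mathcal{A})$. By Lemma~\ref{lem:margulis-russo},
\begin{equation*}
 F'(t)=\sum_{i}\lambda_i\,r_i(1-r_i)\,I_i^{\mathbf r(t)}(\mathcal{A}).
\end{equation*}
Each term is nonnegative and $\lambda_i\ge\log K$. Applying Lemma~\ref{lem:efron-stein-influence} to the product measure $\mu_{\mathbf r(t)}$ yields
\begin{equation*}
 F'(t)\ge \log K\cdot\sum_i r_i(1-r_i)I_i^{\mathbf r(t)}(\mathcal{A})\ge \log K\cdot F(t)\bigl(1-F(t)\bigr).
\end{equation*}

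\textbf{Integration.} The case $0<\alpha<1$ is the main one. Since $\mathcal{A}$ is a nonempty proper increasing family, it contains $[n]$ but not $\emptyset$. All $r_i(t)\in(0,1)$, so both $[n]$ and $\emptyset$ carry positive $\mu_{\mathbf r(t)}$-measure. Hence $0<F(t)<1$ throughout $[0,1]$. We may then set $G(t)=\log\frac{F(t)}{1-F(t)}$, which satisfies
\begin{equation*}
 G'(t)=\frac{F'(t)}{F(t)(1-F(t))}\ge\log K.
\end{equation*}
Integrating over $[0,1]$ gives $G(1)-G(0)\ge\log K$, that is, $\frac{\beta}{1-\beta}\ge K\frac{\alpha}{1-\alpha}$.

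\textbf{The equivalent form.} Solving for $1-\beta$ is routine algebra:
\begin{equation*}
 1-\beta\le\frac{1}{1+K\alpha/(1-\alpha)}=\frac{1-\alpha}{1+(K-1)\alpha}=h_K(\alpha).
\end{equation*}

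\textbf{Endpoints.} If $\alpha=0$, then $\mathcal{A}=\emptyset$, because every set has positive measure. So $\beta=0$, and $1-\beta=1=h_K(0)$. If $\alpha=1$, then $\mathcal{A}=2^{[n]}$, so $\beta=1$ and $1-\beta=0=h_K(1)$.

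The only delicate point is that $F$ stays strictly between $0$ and $1$ along the whole path, so that the log-odds is well defined. This is handled by the positivity of every atom under $\mu_{\mathbf r(t)}$. Everything else is a direct combination of the two preceding lemmas.
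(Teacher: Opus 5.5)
Your proposal is correct and follows essentially the same route as the paper: the same straight-line path in log-odds coordinates (the paper writes it as $s_i(t)/(1-s_i(t))=\frac{p_i}{1-p_i}K_i^t$, i.e.\ log-odds increasing at rate $\lambda_i=\log K_i$), the generalized Margulis--Russo formula combined with the Efron--Stein influence bound to get $F'\ge(\log K)F(1-F)$, and integration of the log-odds of $F$ over $[0,1]$. Your justification that $0<F(t)<1$ along the whole path (using $[n]\in\mathcal{A}$, $\emptyset\notin\mathcal{A}$, and positive atoms) and your handling of the endpoint cases $\alpha\in\{0,1\}$ match the paper's argument.
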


\begin{proof}
If $\alpha=0$, then 
$\mathcal{A}=\emptyset$, and hence $\beta=0$. If $\alpha=1$, then
$\mathcal{A}=2^{[n]}$, and hence $\beta=1$. Thus,
$
 1-\beta\leq h_K(\alpha)
$ holds in both boundary cases. We may therefore
assume that $0<\alpha<1$. Then
$\mathcal{A}\neq\emptyset$ and 
$\mathcal{A}\neq 2^{[n]}$.
Put
$
 \lambda_i=\log K_i
$
and define $\mathbf s(t)=(s_1(t),\ldots,s_n(t))$ by
\begin{equation*}
 \frac{s_i(t)}{1-s_i(t)}
 =
 \frac{p_i}{1-p_i}K_i^t,
 \qquad 0\leq t\leq1.
\end{equation*}
Set
\begin{equation*}
 R_i(t)=\frac{p_i}{1-p_i}K_i^t>0.
\end{equation*}
Solving $s_i(t)/(1-s_i(t))=R_i(t)$ gives
\begin{equation*}
 s_i(t)=\frac{R_i(t)}{1+R_i(t)}.
\end{equation*}
In particular, we have $s_i(0)=p_i$ and $s_i(1)=1-q_i$. 
Since $R_i(t)>0$, it follows  that
$0<s_i(t)<1$. Hence, $\mathbf s(t)\in(0,1)^n$ and
$\mu_{\mathbf s(t)}$ is well defined.
Let
$
 g(t)=\mu_{\mathbf s(t)}(\mathcal{A}).
$
Since $\mathcal{A}\neq\emptyset$ and 
$\mathcal{A}\neq 2^{[n]}$, we have $0<g(t)<1$.

Taking logarithms in the defining equation for $s_i(t)$ gives
\begin{equation*}
 \log\frac{s_i(t)}{1-s_i(t)}
 =
 \log\frac{p_i}{1-p_i}+t\log K_i.
\end{equation*}
Differentiating both sides, we obtain
\begin{align*}
 \frac{s_i'(t)}{s_i(t)}
 +
 \frac{s_i'(t)}{1-s_i(t)}
 &=
 \log K_i.
\end{align*}
Hence,
\begin{equation*}
 \frac{s_i'(t)}
 {s_i(t)(1-s_i(t))}
 =
 \log K_i.
\end{equation*}
Writing $\lambda_i=\log K_i$, we have
$
 s_i'(t)
 =
 \lambda_i s_i(t)(1-s_i(t)).
$
By Lemma \ref{lem:margulis-russo},  we obtain
\begin{align*}
 g'(t)
 =
 \sum_{i=1}^n
 s_i'(t)I_i^{\mathbf s(t)}(\mathcal{A})=
 \sum_{i=1}^n
 \lambda_i s_i(t)(1-s_i(t))
 I_i^{\mathbf s(t)}(\mathcal{A}).
\end{align*}
On the other hand, applying Lemma \ref{lem:efron-stein-influence}  yields
\begin{align*}
 g(t)(1-g(t))\leq
 \sum_{i=1}^n
 s_i(t)(1-s_i(t))
 I_i^{\mathbf s(t)}(\mathcal{A}).
\end{align*}
Since $K=\min_{i\in[n]}K_i$ and $K_i>1$ for every $i$, we have
$
 \lambda_i=\log K_i\geq\log K>0.
$
All the influences are nonnegative, so
\begin{align*}
 g'(t)
 &\geq
 (\log K)
 \sum_{i=1}^n
 s_i(t)(1-s_i(t))
 I_i^{\mathbf s(t)}(\mathcal{A})\\
 &\geq
 (\log K)g(t)(1-g(t)).
\end{align*}
Because $0<g(t)<1$, division by $g(t)(1-g(t))$ is valid. Thus,
\begin{equation*}
 \frac{g'(t)}{g(t)(1-g(t))}
 \geq
 \log K.
\end{equation*}
Note that
\begin{align*}
 \frac{\mathrm d}{\mathrm dt}
 \log\frac{g(t)}{1-g(t)}
 &=
 \frac{g'(t)}{g(t)}
 +
 \frac{g'(t)}{1-g(t)}=
 \frac{g'(t)}{g(t)(1-g(t))}.
\end{align*}
Integrating over $t\in[0,1]$  gives
\begin{equation*}
 \log\frac{g(1)}{1-g(1)}
 -
 \log\frac{g(0)}{1-g(0)}
 \geq
 \log K.
\end{equation*}
Since $\mathbf s(0)=\mathbf p$ and
$\mathbf s(1)=\mathbf 1-\mathbf q$, we have
\begin{equation*}
 g(0)=\alpha,
 \qquad
 g(1)=\beta.
\end{equation*}
Consequently,
\begin{equation*}
 \log\frac{\beta}{1-\beta}
 -
 \log\frac{\alpha}{1-\alpha}
 \geq
 \log K.
\end{equation*}
Exponentiating both sides yields
\begin{equation*}
 \frac{\beta}{1-\beta}
 \geq
 K\frac{\alpha}{1-\alpha}.
\end{equation*}

Finally, rearranging this inequality gives
$
 \beta
 \geq
 \frac{K\alpha}{1+(K-1)\alpha}.
$
Hence,
\begin{equation*}
 1-\beta
 \leq
 \frac{1-\alpha}{1+(K-1)\alpha}
 =
 h_K(\alpha).
\end{equation*}
Together with the two boundary cases considered at the beginning, this
completes the proof.
\end{proof}

Before proceeding, we record the following relation between $d_1$ and $K$.

\begin{lemma}\label{lem:range}
Let $\mathbf p=(p_1,\ldots,p_n), \mathbf q=(q_1,\ldots,q_n)\in(0,\frac{1}{2})^n$ satisfy
$
 d_1=\max_{i\in[n]}d_i,
$
where $d_i=p_iq_i$.
Then
$$
 \frac{1}{2(K+1)}
 <
d_1
 \leq
 \frac{1}{(1+\sqrt K)^2}.
$$
\end{lemma}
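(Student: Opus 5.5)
We need to show $\frac{1}{2(K+1)}<d_1\le \frac{1}{(1+\sqrt K)^2}$, where $K=\min_i K_i$ and $K_i=\frac{(1-p_i)(1-q_i)}{p_iq_i}$, with all $p_i,q_i<1/2$ and $d_1=\max_i d_i$. The plan is to relate each $K_i$ to $d_i$ by bounding $(1-p_i)(1-q_i)=1-(p_i+q_i)+d_i$ from both sides in terms of $d_i$ alone. Then we use the monotonicity of the resulting bounds in $d_i$, together with $d_1=\max_i d_i$, to pass to $K$.

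\emph{Upper bound.} By AM--GM, $p_i+q_i\ge 2\sqrt{d_i}$, so
\[
(1-p_i)(1-q_i)\le 1-2\sqrt{d_i}+d_i=(1-\sqrt{d_i})^2 .
\]
Hence $K_i\le \bigl(\frac{1-\sqrt{d_i}}{\sqrt{d_i}}\bigr)^2$. Applying this with $i=1$ gives $K\le K_1\le (1/\sqrt{d_1}-1)^2$. Since $\sqrt{d_1}<1$, taking square roots yields $\sqrt K\le 1/\sqrt{d_1}-1$, which rearranges to $d_1\le 1/(1+\sqrt K)^2$. Only the coordinate $i=1$ is needed for this direction.

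\emph{Lower bound.} Here we need $K$ to be large, i.e.\ every $K_i$ is large, and this is where the maximality of $d_1$ enters. Since $p_i,q_i<1/2$, we have $(1-2p_i)(1-2q_i)>0$, which expands to $1-2p_i-2q_i+4d_i>0$, so $p_i+q_i<\tfrac12+2d_i$. Therefore
\[
(1-p_i)(1-q_i)=1-(p_i+q_i)+d_i>\tfrac12-d_i ,
\]
and so $K_i>\frac{1/2-d_i}{d_i}=\frac{1}{2d_i}-1$. The right-hand side is decreasing in $d_i$, and $d_i\le d_1$, so $K_i>\frac{1}{2d_1}-1$ for every $i$. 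Taking the minimum over $i$ gives $K>\frac{1}{2d_1}-1$, strictly, because it is a minimum of finitely many strict inequalities. This is equivalent to $2d_1(K+1)>1$, i.e.\ $d_1>\frac{1}{2(K+1)}$.

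The main point to get right is choosing the elementary inequality $(1-2p)(1-2q)>0$ for the lower bound, rather than a cruder bound. It is the one that controls $p_i+q_i$ by $d_i$ in the right direction, and it is exactly where the hypothesis $p_i,q_i<\tfrac12$ is used. The direction of monotonicity combined with $d_i\le d_1$ is then what transfers the per-coordinate bound to $K=\min_i K_i$.
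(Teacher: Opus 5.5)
Your proof is correct and follows the paper's argument. The lower bound comes from $(1-2p_i)(1-2q_i)>0$ together with $d_i\le d_1$: the paper applies it at a coordinate $j$ with $K_j=K$, while you apply it at every $i$ and take the minimum, which is equivalent. The upper bound comes from AM--GM at coordinate $1$ together with $K\le K_1$: the paper factors the resulting quadratic in $\sqrt{d_1}$, while you write it as the perfect square $(1-\sqrt{d_1})^2$.
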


\begin{proof}
For every $i\in[n]$, the definition of $K_i$ gives
\begin{align*}
 K_id_i=
 (1-p_i)(1-q_i)=
 1-p_i-q_i+d_i.
\end{align*}
Consequently,
$$
 p_i+q_i=1-(K_i-1)d_i.
 $$

We first prove the lower bound. Choose $j\in[n]$ such that
$K_j=K$. Since $p_j,q_j<1/2$, we have
$
 (1-2p_j)(1-2q_j)>0.
$
Expanding the left-hand side, we obtain
\begin{align*}
 0
 &<
 (1-2p_j)(1-2q_j)=
 1-2p_j-2q_j+4p_jq_j\\
 &=
 1-2(p_j+q_j)+4d_j=
 1-2\bigl(1-(K_j-1)d_j\bigr)+4d_j\\
 &=
 -1+2(K_j-1)d_j+4d_j=
 -1+2(K_j+1)d_j.
\end{align*}
It follows that
\begin{equation*}
 d_j>\frac{1}{2(K_j+1)}
     =\frac{1}{2(K+1)}.
\end{equation*}
Since $d_1=\max_{i\in[n]}d_i$, we have $$d_1\geq d_j>\frac{1}{2(K+1)}.$$

We next prove the upper bound. By the arithmetic--geometric mean
inequality,
\begin{equation*}
 p_1+q_1\geq2\sqrt{p_1q_1}=2\sqrt{d_1}.
\end{equation*}
It follows that
\begin{equation*}
 2\sqrt{d_1}
 \leq
 1-(K_1-1)d_1.
\end{equation*}
Set $x=\sqrt{d_1}$. Then
$
 1-2x-(K_1-1)x^2\geq0.
$
The left-hand side factors as
\begin{equation*}
 1-2x-(K_1-1)x^2
 =
 \bigl(1-(1+\sqrt{K_1})x\bigr)
 \bigl(1+(\sqrt{K_1}-1)x\bigr).
\end{equation*}
Since $p_1,q_1<1/2$, we have $K_1>1$. Hence,
$
 1+(\sqrt{K_1}-1)x>0.
$
It follows that
\begin{equation*}
 1-(1+\sqrt{K_1})x\geq0.
\end{equation*}
Thus, we have
\begin{equation*}
 \sqrt{d_1}=x\leq\frac{1}{1+\sqrt{K_1}}.
\end{equation*}
Then
\begin{equation*}
 d_1\leq\frac{1}{(1+\sqrt{K_1})^2} \leq
 \frac{1}{(1+\sqrt K)^2}.
\end{equation*}
This proves both bounds.
\end{proof}

\subsection{A sharp two-section inequality}\label{sec:bellman}

Let $\mathbf p=(p_1,\ldots,p_n), \mathbf q=(q_1,\ldots,q_n)\in(0,\frac{1}{2})^n$ satisfy
$
 d_1=\max_{i\in[n]}d_i,
$
where $d_i=p_iq_i$. Recall that
\begin{equation*}
 K_i=\frac{(1-p_i)(1-q_i)}{p_iq_i},
 \qquad
 K=\min_{i\in[n]}K_i>1,\qquad  h_K(t)=\frac{1-t}{1+(K-1)t}.
\end{equation*}
We shall use the following numerical inequality.

\begin{lemma}\label{lem:bellman}
Let $K>1$ and suppose that
\begin{equation*}
 \frac{1}{2(K+1)}
 \leq d_1
 \leq
 \frac{1}{(1+\sqrt K)^2}.
\end{equation*}
Let $0\leq x\leq y\leq1$ and $0\leq v\leq u\leq1$ satisfy
\begin{equation*}
 xu\leq d_1,
 \qquad
 yv\leq d_1,
 \qquad
 u\leq h_K(x),
 \qquad
 v\leq h_K(y).
\end{equation*}
Then, for $0<p,q\leq\frac{1}{2}$,
\begin{equation}
 \bigl((1-p)x+py\bigr)
 \bigl(qu+(1-q)v\bigr)
 \leq
 \max\{d_1,pq\}.
 \label{eq:bellman}
\end{equation}
\end{lemma}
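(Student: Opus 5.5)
The plan is to remove the four auxiliary variables one pair at a time, using monotonicity, until we are left with a two-variable inequality in $(y,u)$ with $p,q$ as parameters. First expand the left-hand side of \eqref{eq:bellman} as
\begin{equation*}
F=(1-p)(1-q)\,xv+(1-p)q\,xu+p(1-q)\,yv+pq\,yu .
\end{equation*}
Two observations drive the reduction.

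\emph{Monotonicity in $x$ and $v$.} Both factors $(1-p)x+py$ and $qu+(1-q)v$ are nonnegative, and each is nondecreasing in $x$ and in $v$ respectively. So for fixed $(y,u)$ it is optimal to push $x$ and $v$ to their largest admissible values.

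\emph{The constraint $h_K$ is self-inverse.} The function $h_K(t)=\frac{1-t}{1+(K-1)t}$ is a decreasing Möbius involution of $[0,1]$, so $h_K(h_K(t))=t$. Hence $u\le h_K(x)$ is equivalent to $x\le h_K(u)$.

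With these, the admissible region for $(x,v)$ at fixed $(y,u)$ becomes
\begin{equation*}
x\le x^*(y,u)=\min\{y,\ d_1/u,\ h_K(u)\},\qquad v\le v^*(y,u)=\min\{u,\ d_1/y,\ h_K(y)\}.
\end{equation*}
It therefore suffices to prove
\begin{equation*}
\bigl((1-p)x^*+py\bigr)\bigl(qu+(1-q)v^*\bigr)\le\max\{d_1,pq\}
\end{equation*}
for all $y,u\in[0,1]$. The two expected extremal configurations are $y=u=1$, $x^*=v^*=0$, which gives exactly $pq$, and $x=y$, $u=v$ with $yu=d_1$, which gives exactly $d_1$.

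Next I would use the expansion to dispose of the easy region. Since $xu\le d_1$, $yv\le d_1$ and $xv\le\min\{xu,yv\}\le d_1$, we get
\begin{equation*}
F\le(1-pq)\,d_1+pq\,yu .
\end{equation*}
This already settles the case $yu\le d_1$. In the remaining case $yu>d_1$ the term $xv$ must be taken small, and here we use $xv\le h_K(u)h_K(y)$. I would split according to which entries of the minima are active. Since $y,u$ are large and $d_1\le (1+\sqrt K)^{-2}$, the natural split is between $h_K(u)$ versus $d_1/u$, and symmetrically between $h_K(y)$ versus $d_1/y$.

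In each sub-case the reduced function is an explicit rational function of $(y,u)$, multilinear in $(p,q)$, with $p,q\le\frac12$.
\begin{itemize}
\item In the sub-case where both $h_K$ entries are active, I would show the maximum over $(y,u)$ is attained on the boundary $y=1$ or $u=1$. Then $h_K(1)=0$ kills one term, and the remaining bound $(1-p)x^*+p\le\dots$ reduces to a one-variable inequality.
\item In the sub-case where the $d_1/u$ or $d_1/y$ entries are active, the product telescopes towards $d_1$, and the claim becomes a convex-combination statement comparing $d_1$ and $pq$.
\end{itemize}

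The main obstacle is the intermediate region where $yu$ is moderately larger than $d_1$. There neither endpoint dominates trivially, and one needs a bound of the form
\begin{equation*}
(1-p)(1-q)\,h_K(y)h_K(u)+(1-p)q\,d_1+p(1-q)\,d_1+pq\,yu\le\max\{d_1,pq\}.
\end{equation*}
This is where both bounds of Lemma~\ref{lem:range} should enter. The lower bound $d_1\ge \frac{1}{2(K+1)}$ ensures that $h_K(y)h_K(u)$ decays fast enough relative to $d_1$ once $y,u$ exceed the threshold. The upper bound $d_1\le(1+\sqrt K)^{-2}$ is exactly the condition that the diagonal point $y=u$, $yu=d_1$, is compatible with $u\le h_K(y)$.

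My first attempt would be as follows. Fix the ratio $y/u$, regard the left side as a function of $s=yu$, and show it is convex in $s$. Then only the endpoints $s=d_1$ and $s=1$ need checking, and these are the two extremal configurations above. If convexity fails, I would instead optimise in $p$ and $q$ first: the expression is affine in each of them separately, so $p,q\in\{\text{boundary},\tfrac12\}$ can be reduced to finitely many explicit checks.
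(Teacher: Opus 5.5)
Your reduction is valid. Raising $x,v$ to $x^*=\min\{y,d_1/u,h_K(u)\}$ and $v^*=\min\{u,d_1/y,h_K(y)\}$ is the mirror image of the paper's first step, which keeps $(x,v)$ and raises $u,y$ to $\phi(x),\phi(v)$ with $\phi(t)=\min\{h_K(t),d_1/t\}$. The bound $F\le(1-pq)d_1+pq\,yu$ does settle the case $yu\le d_1$.

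The case $yu>d_1$, however, is the entire content of the lemma. There the proposal consists of intentions, and their concrete forms fail.

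\emph{The displayed bound you say ``one needs'' is false.}
At $y=u=1$ its left side is $pq+(p+q-2pq)d_1$. This exceeds $\max\{d_1,pq\}$ whenever $pq\ge d_1$, because bounding $xu$ and $yv$ by $d_1$ forgets that $x^*=v^*=0$ there. It also fails near the other extremal configuration: for $pq=d_1$ and $y=u\downarrow\sqrt{d_1}$, its left side tends to $d_1+(1-p)(1-q)\bigl(h_K(\sqrt{d_1})^2-d_1\bigr)$. This is strictly larger than $d_1$ as soon as the roots $a\le b$ of $t\,h_K(t)=d_1$ are distinct. So the bound fails next to both extremal configurations, and you never specify a region where it holds.

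\emph{The convexity-in-$s$ plan breaks.}
Because $x^*,v^*$ are minima of several branches, the reduced function has concave kinks where the active branch switches. On the diagonal $y=u=t$, for instance, $h_K$ crosses $d_1/t$ transversally from above at $t=b$ when $a<b$. The derivative of the minimum, which enters both factors with positive coefficients, therefore jumps down there, so $F$ is not convex in $s=t^2$. Off the diagonal, $s=1$ is not even attainable.

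\emph{The fallback gives no finite check.}
The right side $\max\{d_1,pq\}$ is not affine in $(p,q)$, the relevant boundary is the hyperbola $pq=d_1$, and for fixed $(p,q)$ a two-dimensional continuum in $(y,u)$ remains.

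\emph{The both-$h_K$ sub-case is not handled.}
There the region is bounded also by the switching curves, not only by $y=1$ and $u=1$. Nothing in your plan rules out interior maxima.

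What is missing is the structure that makes the problem checkable. The paper's proof proceeds as follows.
\begin{enumerate}[label=\textup{(\arabic*)},leftmargin=2.2em]
 \item It locates the roots $a\le b$ of $t\,h_K(t)=d_1$. The upper hypothesis on $d_1$ gives their existence. Since $\tfrac12 h_K(\tfrac12)=\frac{1}{2(K+1)}$, the lower hypothesis gives $b\le\frac12$.
 \item Consequently $\phi$ equals $h_K$ on $[0,a]\cup[b,1]$ and $d_1/t$ on $[a,b]$, and it is a decreasing involution.
 \item It reduces to $pq=d_1$, using that the reduced expression $G_{p,q}$ is nondecreasing in $p$ and in $q$ while $G_{p,q}/(pq)$ is nonincreasing in each.
 \item It runs a six-region analysis in $(x,v)$:
 \begin{itemize}
  \item convexity in $xv$ on $[a,b]^2$;
  \item convexity in $v$ on $[0,a]\times[a,b]$, combined with explicit factorizations of $d_1-G_{p,q}$ on the edges $v=a$ and $v=b$ with nonnegative affine cofactors;
  \item on the three regions where both branches are $h_K$, a Hessian argument showing that no interior critical point can be a local maximum (the Hessian determinant is negative there);
  \item boundary checks using $c=(K-1)d_1\ge q(1-2p)$, which is where $b\le\frac12$ enters.
 \end{itemize}
\end{enumerate}
None of these ingredients, nor a substitute for them, appears in your proposal, so it does not yet prove the lemma.
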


\begin{proof}
Define
\begin{equation*}
 F(t)=t h_K(t)
     =\frac{t(1-t)}{1+(K-1)t},
 \qquad 0\leq t\leq1.
\end{equation*}
Differentiating gives
\begin{equation*}
 F'(t)
 =
 \frac{1-2t-(K-1)t^2}
      {(1+(K-1)t)^2}.
\end{equation*}
The numerator has a unique zero in $(0,1)$, namely
\begin{equation*}
 t_0=\frac{1}{1+\sqrt K}.
\end{equation*}
Consequently, $F$ is increasing on $[0,t_0]$ and decreasing on
$[t_0,1]$. Moreover,
\begin{equation*}
 F(t_0)=\frac{1}{(1+\sqrt K)^2},
 \qquad
 F\left(\frac12\right)=\frac{1}{2(K+1)}.
\end{equation*}
The assumed range of $d_1$ therefore implies that the equation
$
 t h_K(t)=d_1
$
has two roots, counted with multiplicity,
\begin{equation*}
 0<a\leq t_0\leq b\leq\frac12.
\end{equation*}

The equation $t h_K(t)=d_1$ can be rewritten as
$
 t^2-\bigl(1-(K-1)d_1\bigr)t+d_1=0.
$
Hence,
\begin{equation*}
 ab=d_1,
 \qquad
 a+b=1-(K-1)d_1.
\end{equation*}
Set
$
 c:=1-a-b=(K-1)d_1>0.
$
Then
\begin{equation*}
 K=\frac{(1-a)(1-b)}{ab},\qquad
 h_K(t)
 =
 \frac{d_1(1-t)}{d_1+ct}.
\end{equation*}

For $t>0$, define
\begin{equation*}
 \phi(t)=\min\left\{h_K(t),\frac{d_1}{t}\right\},
\end{equation*}
and set $\phi(0)=1$. Since
\begin{equation*}
 h_K(t)-\frac{d_1}{t}
 =
 -\frac{d_1(t-a)(t-b)}
        {t(d_1+ct)},
\end{equation*}
we have
\begin{equation}
 \phi(t)=
 \begin{cases}
  h_K(t),&0\leq t\leq a,\\[2pt]
  d_1/t,&a\leq t\leq b,\\[2pt]
  h_K(t),&b\leq t\leq1.
 \end{cases}
 \label{eq:phi-pieces}
\end{equation}
A direct calculation gives
\begin{equation*}
 h_K(h_K(t))=t.
\end{equation*}
Furthermore,
\begin{equation*}
 h_K(a)=b,
 \qquad
 h_K(b)=a.
\end{equation*}
Since $h_K$ is decreasing and
\begin{equation*}
 h_K(0)=1,
 \qquad
 h_K(a)=b,
 \qquad
 h_K(b)=a,
 \qquad
 h_K(1)=0,
\end{equation*}
we have
\begin{equation*}
 h_K([0,a])=[b,1],
 \qquad
 h_K([b,1])=[0,a].
\end{equation*}
Moreover, the map $t\mapsto d_1/t$ maps $[a,b]$ onto itself. Both maps are decreasing
and involutive on the corresponding intervals. It follows that
$\phi$ is decreasing and satisfies
\begin{equation*}
 \phi(\phi(t))=t
 \qquad\text{for every }t\in[0,1].
\end{equation*}

The inequalities $xu\leq d_1$ and $u\leq h_K(x)$ imply
$
 u\leq\phi(x).
$
Indeed, this is immediate when $x=0$, while for $x>0$ we have both
$u\leq h_K(x)$ and $u\leq d_1/x$. Similarly,
$
 v\leq\phi(y).
$
Since $\phi$ is decreasing and involutive, the latter inequality gives
\begin{equation*}
 \phi(v)\geq\phi(\phi(y))=y.
\end{equation*}
Moreover, $x\leq y$ implies
\begin{equation*}
 v\leq\phi(y)\leq\phi(x).
\end{equation*}
The left-hand side of \eqref{eq:bellman} is nondecreasing in $u$ and
$y$. We may therefore replace $u$ by $\phi(x)$ and $y$ by $\phi(v)$.
It remains to prove
\begin{equation*}
 G_{p,q}(x,v)\leq\max\{d_1,pq\},
\end{equation*}
where
\begin{equation*}
 G_{p,q}(x,v)
 =
 \bigl((1-p)x+p\phi(v)\bigr)
 \bigl((1-q)v+q\phi(x)\bigr),\qquad v\leq\phi(x).
\end{equation*}
Since $\phi$ is a decreasing involution, the last inequality is
equivalent to $x\leq\phi(v)$.

We first reduce the proof to the case $pq=d_1$. Observe that $d_1<1/4$. The inequalities
$\phi(v)\geq x$ and $\phi(x)\geq v$ give
\begin{align*}
 \frac{\partial G_{p,q}}{\partial p}
 &=
 \bigl(\phi(v)-x\bigr)
 \bigl((1-q)v+q\phi(x)\bigr)
 \geq0,\\
 \frac{\partial G_{p,q}}{\partial q}
 &=
 \bigl(\phi(x)-v\bigr)
 \bigl((1-p)x+p\phi(v)\bigr)
 \geq0.
\end{align*}
Thus, $G_{p,q}$ is nondecreasing in each of $p$ and $q$. On the other
hand,
\begin{equation}
 \frac{G_{p,q}(x,v)}{pq}
 =
 \bigl(\phi(v)+(p^{-1}-1)x\bigr)
 \bigl(\phi(x)+(q^{-1}-1)v\bigr).\label{eq:normalized-G}
\end{equation}
For fixed $x$ and $v$, the right-hand side of
\eqref{eq:normalized-G} is nonincreasing separately in $p$ and $q$.

Suppose first that $pq\leq d_1$. For $0\leq t\leq1$, define
\begin{equation*}
 p(t)=p+t\left(\frac12-p\right),
 \qquad
 q(t)=q+t\left(\frac12-q\right).
\end{equation*}
Then $p(t)$ and $q(t)$ are nondecreasing functions of $t$, and
\begin{equation*}
 p(0)q(0)=pq\leq d_1,
 \qquad
 p(1)q(1)=\frac14>d_1.
\end{equation*}
Since $p(t)q(t)$ is continuous, there exists $t_0\in[0,1)$ such that
$
 p(t_0)q(t_0)=d_1.
$
Set
$
 p'=p(t_0)
$
and 
$
 q'=q(t_0).
$
Then
\begin{equation*}
 p'\geq p,
 \qquad
 q'\geq q,
 \qquad
 p'q'=d_1.
\end{equation*}
Since $G_{p,q}$ is nondecreasing separately in $p$ and $q$, we
have
$
 G_{p,q}(x,v)
 \leq
 G_{p',q'}(x,v).
$
Thus, it suffices to prove
\begin{equation*}
 G_{p',q'}(x,v)\leq d_1
\end{equation*}
Therefore, the case $pq\leq d_1$ is reduced to the case $pq=d_1$.

Suppose next that $pq\geq d_1$. Keep $p$ fixed and define
$
 q'=\frac{d_1}{p}.
$
Then $q'>0$ and
$
 q'\leq q\leq\frac12.
$
By construction,
$
 pq'=d_1.
$
For fixed $p,x$ and $v$, equation \eqref{eq:normalized-G} shows that
\begin{equation*}
 \frac{G_{p,s}(x,v)}{ps}
\end{equation*}
is nonincreasing as a function of $s$. Since $q'\leq q$, it follows
that
\begin{equation*}
 \frac{G_{p,q}(x,v)}{pq}
 \leq
 \frac{G_{p,q'}(x,v)}{pq'}.
\end{equation*}
Once the estimate has been proved for pairs whose product is $d_1$, we
may apply it to $(p,q')$ to obtain
$
 G_{p,q'}(x,v)\leq d_1=pq'.
$
Thus,
\begin{equation*}
 \frac{G_{p,q}(x,v)}{pq}
 \leq
 \frac{G_{p,q'}(x,v)}{pq'}
 \leq1,
\end{equation*}
and hence
\begin{equation*}
 G_{p,q}(x,v)
 \leq pq
 =
 \max\{d_1,pq\}.
\end{equation*}
Therefore, the case $pq\geq d_1$ is also reduced to the case $pq=d_1$.

\textbf{It remains to prove
$
 G_{p,q}(x,v)\leq d_1
$
under the assumption
$
 pq=d_1.
$}
In this case, we have
$
 \frac{d_1}{p}=q\leq\frac12,
$
which is equivalent to
$
 p\geq2d_1.
$
Together with $p\leq1/2$, this gives
\begin{equation*}
 2d_1\leq p\leq\frac12.
\end{equation*}
By symmetry, $p\leq1/2$ and $pq=d_1$ also give
\begin{equation*}
 2d_1\leq q\leq\frac12.
\end{equation*}
Consequently, it suffices to prove $
 G_{p,q}(x,v)\leq d_1
$ under
\begin{equation*}
 pq=d_1,
 \qquad
 2d_1\leq p,q\leq\frac12.
\end{equation*}

Recall that $0<a\leq b\leq\frac12$.
Let
$
 I_0=[0,a],
 I_1=[a,b]
$
and
$
 I_2=[b,1].
$
By \eqref{eq:phi-pieces} and the involution property of $\phi$, we have
\begin{equation*}
 \phi(I_0)=I_2,
 \qquad
 \phi(I_1)=I_1,
 \qquad
 \phi(I_2)=I_0.
\end{equation*}
We now determine which pairs of intervals can contain $(x,v)$ under
the constraint $v\leq\phi(x)$.

If $x\in I_0$, then $\phi(x)\in I_2$, so $v$ may lie in $I_0$,
$I_1$, or $I_2$. If $x\in I_1$, then $\phi(x)\in I_1$ and hence
$v\leq b$, so $v$ can lie only in $I_0$ or $I_1$. Finally, if
$x\in I_2$, then $\phi(x)\in I_0$ and hence $v\leq a$, so
$v$ must lie in $I_0$.
Consequently, every pair $(x,v)$ satisfying $v\leq\phi(x)$ belongs to
one of the following six regions:
\begin{equation*}
 I_0\times I_0,\quad
 I_0\times I_1,\quad
 I_0\times I_2,\quad
 I_1\times I_0,\quad
 I_1\times I_1,\quad
 I_2\times I_0.
\end{equation*}
Within each of these regions, the constraint $v\leq\phi(x)$ remains in
force.

\textbf{Suppose first that $(x,v)\in I_1\times I_1$.} Then
$
 \phi(x)=\frac{d_1}{x}
$
and 
$
 \phi(v)=\frac{d_1}{v}.
$
Put $z=xv$. Since $x,v\geq a>0$ and $v\leq \phi(x)=d_1/x$, we have
\begin{equation*}
 a^2\leq z\leq d_1.
\end{equation*}
Expanding $G_{p,q}$ and using $pq=d_1$ gives
\begin{equation*}
 G_{p,q}(x,v)
 =
 (1-p)(1-q)z
 +
 \bigl((1-p)q+p(1-q)\bigr)d_1
 +
 \frac{d_1^3}{z}.
\end{equation*}
Denote the right-hand side by
$
 H(z).
$
Since $z>0$, we have
\begin{equation*}
 H''(z)=\frac{2d_1^3}{z^3}>0.
\end{equation*}
Thus, $H$ is convex on $[a^2,d_1]$. For any
$z\in[a^2,d_1]$, write
$
 z=(1-\theta)a^2+\theta d_1
$
for some $\theta\in[0,1]$. Convexity gives
\begin{equation*}
 H(z)
 \leq
 (1-\theta)H(a^2)+\theta H(d_1)
 \leq
 \max\{H(a^2),H(d_1)\}.
\end{equation*}
Thus, it suffices to verify the desired inequality at
$z=a^2$ and $z=d_1$. 
At $z=d_1$, we have $xv=d_1$. Since $x,v\in I_1$,
\begin{equation*}
 \phi(x)=\frac{d_1}{x}=v,
 \qquad
 \phi(v)=\frac{d_1}{v}=x.
\end{equation*}
Therefore,
\begin{align*}
 G_{p,q}(x,v)
 =
 \bigl((1-p)x+p\phi(v)\bigr)
 \bigl((1-q)v+q\phi(x)\bigr)=xv=d_1.
\end{align*} At $z=a^2$, the inequalities $x,v\geq a$ and the identity
$xv=a^2$ imply that $x=v=a$. Since $d_1=ab$, we have
\begin{equation*}
 \phi(a)=\frac{d_1}{a}=b.
\end{equation*}
Therefore,
\begin{align*}
 G_{p,q}(a,a)
 =
 \bigl((1-p)a+p\phi(a)\bigr)
 \bigl((1-q)a+q\phi(a)\bigr)=
 \bigl(a+p(b-a)\bigr)
 \bigl(a+q(b-a)\bigr).
\end{align*}
Since
$
 (1-2p)(1-2q)\geq0,
$
we have
\begin{equation*}
 p+q\leq\frac12+2pq=\frac12+2ab.
\end{equation*}
Consequently,
\begin{align*}
 d_1-
 \bigl(a+p(b-a)\bigr)
 \bigl(a+q(b-a)\bigr)&=
 a(b-a)\bigl(1-p-q-b(b-a)\bigr)\\
 &\geq
 a(b-a)\left(\frac12-b(a+b)\right)\geq0.
\end{align*}
where the last inequality follows from $b\leq1/2$ and $a+b\leq1$.
Therefore, $G_{p,q}(x,v)\leq d_1$ on $I_1\times I_1$.

\textbf{We next consider $(x,v)\in I_0\times I_1$.} Here,
$
 \phi(x)=h_K(x)
$
and
$
 \phi(v)=\frac{d_1}{v}.
$
Recall that $G_{p,q}(x,v)
 =
 \bigl((1-p)x+p\phi(v)\bigr)
 \bigl((1-q)v+q\phi(x)\bigr).$
Since $x\leq a\leq1/2<1$, we have $h_K(x)>0$.
For fixed $x$, differentiation with respect to $v$ gives
\begin{equation*}
 \frac{\partial^2G_{p,q}}{\partial v^2}
 =
 \frac{2d_1^2h_K(x)}{v^3}>0.
\end{equation*}
Thus, for each fixed $x\in I_0$, the function
$v\mapsto G_{p,q}(x,v)$ is convex on $[a,b]$. As in the preceding
convexity argument, this implies
\begin{equation*}
 G_{p,q}(x,v)
 \leq
 \max\bigl\{G_{p,q}(x,a),G_{p,q}(x,b)\bigr\}
 \qquad\text{for every }v\in[a,b].
\end{equation*}
Therefore, it suffices to verify the desired inequality at
$v=a$ and $v=b$.

Put
$
 D_x=d_1+cx.
$
Substituting $q=d_1/p$ and
$h_K(x)=d_1(1-x)/D_x$, and then collecting powers of $x$, gives
\begin{align}
 d_1-G_{p,q}(x,a)
 &=
 \frac{a(b-x)Q_a(x)}{pD_x},
 \label{eq:Qa-factor}\\
 d_1-G_{p,q}(x,b)
 &=
 \frac{b(a-x)Q_b(x)}{pD_x},
 \label{eq:Qb-factor}
\end{align}
where $Q_a$ and $Q_b$ are the following affine functions:
\begin{align*}
 Q_a(x)
 &=
 \left(1-\frac{x}{a}\right)
 abp(1-b-p+ab)
 +
 \frac{x}{a}\,a(a-1)R_a(p),\\
 Q_b(x)
 &=
 \left(1-\frac{x}{a}\right)
 abp(1-a-p+ab)
 +
 \frac{x}{a}\,aR_b(p),
\end{align*}
with
\begin{align*}
 R_a(p)
 &=
 p^2+p(b^2-ab-1)+ab,\\
 R_b(p)
 &=
 (1-a)p(1-p)-ab(1-b).
\end{align*}

Since $pq=d_1=ab$ and $q\leq1/2$, we have
$p=ab/q\geq2ab$.
Since $0<a,b,p\leq1/2$, we have
\begin{equation*}
 Q_a(0)=abp(1-b-p+ab)\geq0,\qquad
 Q_b(0)=abp(1-a-p+ab)\geq0.
\end{equation*}
The polynomial $R_a(p)$ is convex, and
\begin{align*}
 R_a(2ab)
 =
 ab\bigl(2b(a+b)-1\bigr)\leq0,\qquad
 R_a\left(\frac12\right)
 =
 \frac{2b(a+b)-1}{4}\leq0.
\end{align*}
As in the preceding
convexity argument, $R_a(p)\leq0$ for $2ab\leq p\leq1/2$. Since
$a(a-1)<0$, it follows that
\begin{equation*}
 Q_a(a)=a(a-1)R_a(p)\geq0.
\end{equation*}
As $Q_a$ is affine, $Q_a(x)\geq0$ for every $x\in[0,a]$.
Similarly, $R_b(p)$ is concave, and
\begin{align*}
 R_b(2ab)
 =
 ab(2a-1)(2ab-b-1)\geq0,\qquad
 R_b\left(\frac12\right)
 =
 \frac{1-2a+a(1-2b)^2}{4}\geq0.
\end{align*}
Hence, $R_b(p)\geq0$ for $2ab\leq p\leq1/2$, and therefore
\begin{equation*}
 Q_b(a)=aR_b(p)\geq0.
\end{equation*}
Because $Q_b$ is affine, $Q_b(x)\geq0$ on $[0,a]$.
Equations \eqref{eq:Qa-factor} and \eqref{eq:Qb-factor} now show that
\begin{equation*}
 G_{p,q}(x,v)\leq d_1
 \qquad\text{on }I_0\times I_1.
\end{equation*}
Since $pq=d_1$ and $p\leq1/2$, we also have
$q=d_1/p\geq2d_1$. Hence, $2d_1\leq q\leq1/2$.
In  view of  $G_{p,q}(x,v)=G_{q,p}(v,x)$
and $2d_1\leq q\leq1/2$, \textbf{the same argument proves the result on
$I_1\times I_0$}. Indeed, the constraint $v\leq\phi(x)$ is equivalent to
$x\leq\phi(v)$. Thus,
after interchanging $(x,p)$ with $(v,q)$, a feasible point in
$I_1\times I_0$ becomes a feasible point in $I_0\times I_1$.

\textbf{It remains to treat the three regions on which both occurrences of
$\phi$ are equal to $h_K$.} Write
\begin{equation*}
 A=(1-p)x+ph_K(v),
 \qquad
 B=(1-q)v+qh_K(x),
 \qquad
 G=AB,\qquad  D_t=d_1+ct.
\end{equation*}
Observe that $G= G_{p,q}(x,v)$, and
\begin{equation*}
 h_K'(t)
 =
 -\frac{d_1(d_1+c)}{D_t^2},
 \qquad
 h_K''(t)
 =
 \frac{2cd_1(d_1+c)}{D_t^3}.
\end{equation*}
Consequently,
\begin{align*}
 G_x=(1-p)B+qh_K'(x)A,\qquad G_v=ph_K'(v)B+(1-q)A,
\end{align*}
and
\begin{align*}
 G_{xx}
 &=
 \frac{2qd_1(d_1+c)}{D_x^3}
 \bigl(cp h_K(v)-d_1(1-p)\bigr),\\
 G_{vv}
 &=
 \frac{2pd_1(d_1+c)}{D_v^3}
 \bigl(cq h_K(x)-d_1(1-q)\bigr).
\end{align*}
Here and below, subscripts on $G$ denote partial derivatives with
respect to the indicated variables.
Suppose that $G$ attains a local maximum at an
interior point $(x,v)$ of one of these regions. Then
\begin{equation*}
 G_x=G_v=0,\qquad
 G_{xx}\leq0,
 \qquad
 G_{vv}\leq0,
 \qquad
 G_{xx}G_{vv}-G_{xv}^2\geq0.
\end{equation*}
At an interior point, $x,v<1$, and hence
$h_K(x),h_K(v)>0$. Since $p,q>0$, we have
\begin{equation*}
 A\geq ph_K(v)>0,
 \qquad
 B\geq qh_K(x)>0.
\end{equation*}
The equations $G_x=G_v=0$ imply
\begin{equation*}
 pqh_K'(x)h_K'(v)
 =
 (1-p)(1-q)=:L.
\end{equation*}
Moreover,
\begin{equation*}
 G_{xv}
 =
 (1-p)(1-q)+pqh_K'(x)h_K'(v)
 =
 2L>0.
\end{equation*}
Define
\begin{equation*}
 \Delta_p=d_1(1-p)-cp h_K(v),
 \qquad
 \Delta_q=d_1(1-q)-cq h_K(x).
\end{equation*}
Since $ G_{xx}\leq0$ and $
 G_{vv}\leq0$, we have $\Delta_p,\Delta_q\geq0$. If $\Delta_p=0$, then $G_{xx}=0$, while if $\Delta_q=0$, then
$G_{vv}=0$. In either case, since $G_{xv}>0$, we have
\begin{equation*}
 G_{xx}G_{vv}-G_{xv}^2
 =
 -G_{xv}^2<0,
\end{equation*}
a contradiction. Therefore, we have
$
 \Delta_p, \Delta_q>0.
$
Since $pq=d_1$, $G_{xv}=2L$ and
\begin{equation*}
 L=pqh_K'(x)h_K'(v)
  =\frac{d_1^3(d_1+c)^2}{D_x^2D_v^2},
\end{equation*}
the formulas for $G_{xx}$ and $G_{vv}$ give
\begin{align*}
 \frac{G_{xx}G_{vv}}{G_{xv}^2}
 =
 \frac{\Delta_p\Delta_q}{LD_xD_v}<
 \frac{d_1^2(1-p)(1-q)}{LD_xD_v}=
 \frac{d_1^2}{D_xD_v}
 \leq1.
\end{align*}
Thus,
\begin{equation*}
 G_{xx}G_{vv}-G_{xv}^2<0,
\end{equation*}
again a contradiction. Therefore, none of
these three regions has an interior local maximum.

It remains to check their boundaries. We first consider the boundary $v=0$. Since $h_K(0)=1$, we have
\begin{equation*}
 G(x,0)
 =
 \bigl(p+(1-p)x\bigr)qh_K(x).
\end{equation*}
Using
\begin{equation*}
 q=\frac{d_1}{p},
 \qquad
 h_K(x)=\frac{d_1(1-x)}{D_x},
\end{equation*}
we obtain
\begin{equation*}
 G(x,0)
 =
 \frac{d_1^2(1-x)\bigl(p+(1-p)x\bigr)}
 {pD_x}.
\end{equation*}
Consequently,
\begin{align*}
 d_1-G(x,0)
 &=
 \frac{d_1}{pD_x}
 \left[
 pD_x-d_1(1-x)\bigl(p+(1-p)x\bigr)
 \right]\\
&=\frac{d_1x}{pD_x}
 \bigl(pc-d_1(1-2p)+d_1(1-p)x\bigr).
\end{align*}
Recall that $d_1=pq$ and
$
 c=1-a-b.
$
Since $q\leq1/2$, we have
\begin{align*}
 q(1-2p)
 =q-2pq=q-2d_1\leq\frac12-2d_1.
\end{align*}
Moreover,
\begin{equation*}
 c-\left(\frac12-2d_1\right)
 =
 2\left(\frac12-a\right)
  \left(\frac12-b\right)
 \geq0.
\end{equation*}
Therefore,
$
 q(1-2p)\leq c.
$
Multiplying by $p>0$ gives
$
 d_1(1-2p)=pq(1-2p)\leq pc.
$
Thus,
\begin{equation*}
 pc-d_1(1-2p)+d_1(1-p)x\geq0.
\end{equation*}
It follows that
\begin{equation*}
 d_1-G(x,0)\geq0.
\end{equation*}

On $x=0$, direct simplification gives
\begin{equation*}
 d_1-G(0,v)
 =
 \frac{d_1v
 \bigl(c+2d_1-p+v(p-d_1)\bigr)}
 {d_1+cv}.
\end{equation*}
Observe that
$
 p-d_1=p(1-q)>0
$
and
\begin{align*}
 c+2d_1-p
 \geq
 c+2d_1-\frac12=
 2\left(\frac12-a\right)
  \left(\frac12-b\right)
 \geq0.
\end{align*}
Therefore, $G(0,v)\leq d_1$. 
The edge $v=a$ was treated in \eqref{eq:Qa-factor}, and the edge
$x=a$ follows from the symmetric case $I_1\times I_0$. Thus all four
boundary edges of $I_0\times I_0$, namely $x=0$, $v=0$, $x=a$, and
$v=a$, have been treated. Since $G$ has no local maximum in the
interior, it follows that $G\leq d_1$ throughout
$I_0\times I_0$.

Finally, consider the feasible part of $I_0\times I_2$. Since
$x\in[0,a]$, $v\in[b,1]$, and $v\leq\phi(x)=h_K(x)$, it is described
by
\begin{equation*}
 0\leq x\leq a,
 \qquad
 b\leq v\leq h_K(x).
\end{equation*}
Recall that $h_K(0)=1$ and $h_K(a)=b$. Hence, the boundary of the
feasible region consists of
\begin{equation*}
 \{(0,v):b\leq v\leq1\},
 \qquad
 \{(x,b):0\leq x\leq a\},
 \qquad
 \{(x,h_K(x)):0\leq x\leq a\}.
\end{equation*}
When $x=a$, the inequalities
$
 b\leq v\leq h_K(a)=b
$
force $v=b$. Thus, $x=a$ contributes only the point $(a,b)$, which
already belongs to both the second and the third boundary parts.
Similarly, $v=1$ forces $x=0$, giving the point $(0,1)$, which already belongs to both the first and the third boundary parts.
The boundary $x=0$ was treated above, while the boundary $v=b$ was
treated in \eqref{eq:Qb-factor}. On the curved boundary
$v=h_K(x)=\phi(x)$, the involution property of $\phi$ gives
$
 \phi(v)=\phi(\phi(x))=x.
$
Therefore,
\begin{align*}
 G_{p,q}(x,v)
 =
 \bigl((1-p)x+p\phi(v)\bigr)
 \bigl((1-q)v+q\phi(x)\bigr)=xv=xh_K(x).
\end{align*}
Since $0\leq x\leq a\leq b$, we have
\begin{equation*}
 d_1-xh_K(x)
 =
 \frac{d_1(a-x)(b-x)}{d_1+cx}
 \geq0.
\end{equation*}
Thus, $G_{p,q}\leq d_1$ on every boundary part. The preceding
second-derivative argument shows that $G_{p,q}$ has no local maximum
in the interior. Hence, $G_{p,q}\leq d_1$ throughout the feasible part
of $I_0\times I_2$. As before, interchanging $(x,p)$ with $(v,q)$ maps the feasible part
of $I_2\times I_0$ to that of $I_0\times I_2$. Since
$
 G_{p,q}(x,v)=G_{q,p}(v,x),
$
the same argument applies to $I_2\times I_0$.
This completes the proof.
\end{proof}

\subsection{Induction and equality in the open range}
\label{sec:product-proof}

We now state and prove the separation inequality required in
\eqref{eq:separation}.

\begin{proposition}\label{prop:separation}
Let $\mathbf p=(p_1,\ldots,p_n), \mathbf q=(q_1,\ldots,q_n)\in(0,\frac{1}{2})^n$. Set $d_i=p_iq_i$ for
$i\in[n]$, and assume that
$
 d_1=\max_{i\in[n]}d_i.
$
Then, for every increasing family
$\mathcal{A}\subseteq2^{[n]}$,
\begin{equation*}
 \mu_{\mathbf p}(\mathcal{A})
 \bigl(1-\mu_{\mathbf 1-\mathbf q}(\mathcal{A})\bigr)
 \leq d_1.
\end{equation*}
\end{proposition}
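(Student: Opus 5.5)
We prove the separation inequality by induction on $n$, splitting $\mathcal{A}$ into sections along a coordinate $i\neq 1$ and closing the step with Lemma~\ref{lem:bellman}. For $n=1$ the increasing families are $\emptyset$, $\{\{1\}\}$ and $2^{[1]}$. The left-hand side equals $0$, $p_1q_1=d_1$ and $0$ respectively, so the claim holds.

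For $n\geq2$ pick a coordinate $i\neq1$; since $n\ge 2$ such an $i$ exists. Write $x=\mu_{\mathbf p_{-i}}(\mathcal{A}(\bar i))$ and $y=\mu_{\mathbf p_{-i}}(\mathcal{A}(i))$. Likewise write $u=1-\mu_{(\mathbf 1-\mathbf q)_{-i}}(\mathcal{A}(\bar i))$ and $v=1-\mu_{(\mathbf 1-\mathbf q)_{-i}}(\mathcal{A}(i))$. Since $\mathcal{A}$ is increasing, $\mathcal{A}(\bar i)\subseteq\mathcal{A}(i)$, and both sections are increasing families on $[n]\setminus\{i\}$. This gives $x\le y$ and $v\le u$. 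Conditioning on coordinate $i$,
\[
\mu_{\mathbf p}(\mathcal{A})=(1-p_i)x+p_iy,\qquad 1-\mu_{\mathbf 1-\mathbf q}(\mathcal{A})=q_iu+(1-q_i)v,
\]
because coordinate $i$ is absent with probability $q_i$ under $\mu_{\mathbf 1-\mathbf q}$. This is exactly the left-hand side of \eqref{eq:bellman} with $p=p_i$ and $q=q_i$.

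Now check the hypotheses of Lemma~\ref{lem:bellman} for the reduced vectors $\mathbf p_{-i},\mathbf q_{-i}$. These still contain coordinate $1$, so their maximal product is $d_1$. Applying the induction hypothesis to each section gives $xu\le d_1$ and $yv\le d_1$. Lemma~\ref{lem:logodds} applied to each section gives $u\le h_{K'}(x)$ and $v\le h_{K'}(y)$, where $K'=\min_{j\neq i}K_j\ge K$. Since $h_K$ is pointwise decreasing in $K$, I will either run Lemma~\ref{lem:bellman} with $K'$ or weaken the bounds to $u\le h_K(x)$ and $v\le h_K(y)$.

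The range condition on $d_1$ in Lemma~\ref{lem:bellman} must hold for whichever constant we use, and this is the main obstacle. Lemma~\ref{lem:range}, applied to the reduced vectors, gives $\frac{1}{2(K'+1)}<d_1\le\frac{1}{(1+\sqrt{K'})^2}$, so it is cleanest to use $K'$ throughout. The concern is whether the lower bound survives deleting coordinate $i$. Lemma~\ref{lem:range} chooses $j$ with $K_j=K'$ among the remaining coordinates, and that coordinate still satisfies $p_j,q_j<1/2$, so its proof goes through unchanged for the reduced vectors. Lemma~\ref{lem:bellman} with $K'$, $p=p_i\le1/2$ and $q=q_i\le 1/2$ then yields the bound $\max\{d_1,p_iq_i\}=d_1$, since $p_iq_i\le d_1$. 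This completes the induction.

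The remaining bookkeeping is routine. The degenerate cases $x$ or $y\in\{0,1\}$ are covered by the boundary statement in Lemma~\ref{lem:logodds}. The only delicate points are that the induction must keep coordinate $1$, so we never split on it, and that Lemma~\ref{lem:bellman} is applied with a constant that matches Lemma~\ref{lem:range} on the reduced index set.
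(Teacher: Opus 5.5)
Your proposal is correct and follows the paper's proof: the paper also inducts on $n$ and exposes a coordinate other than $1$ (namely coordinate $n$). It bounds the sections by the induction hypothesis and by Lemma~\ref{lem:logodds} with $K_{-n}=\min_{i\in[n-1]}K_i$, checks the range hypothesis by applying Lemma~\ref{lem:range} to the truncated vectors, and closes with Lemma~\ref{lem:bellman} using $p_nq_n\le d_1$. Your fallback of weakening to $h_K$ with the global $K$ would also have worked, since $h_{K'}\le h_K$ and Lemma~\ref{lem:range} applied to the full vectors already gives the required range for $K$.
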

\begin{proof}
We proceed by induction on $n$. For $n=1$, the only increasing families are
$\emptyset$, $\{\{1\}\}$, and $2^{\{1\}}$.  The corresponding values of
$
\mu_{\mathbf p}(\mathcal A)
\bigl(1-\mu_{\mathbf 1-\mathbf q}(\mathcal A)\bigr)
$
are respectively $0$, $p_1q_1=d_1$, and $0$
Moreover, equality holds if and only if $\mathcal A=\{\{1\}\}$.
Thus, the conclusion follows.

Now let $n\geq2$ and assume that the proposition holds in dimension
$n-1$.  Set
$
 p=p_n$
and
$
q=q_n.
$
Recall that $\mathcal{A}(\bar n)$ and $\mathcal{A}(n)$ are the sections
of $\mathcal{A}$ according as coordinate $n$ is absent or present,
respectively, while $\mathbf p_{-n}$ and $\mathbf q_{-n}$ are obtained
from $\mathbf p$ and $\mathbf q$ by deleting their $n$th coordinates.
Since $\mathcal{A}$ is increasing, both sections are increasing families
on $[n-1]$, and
$
 \mathcal{A}(\bar n)\subseteq\mathcal{A}(n).
$
Define
\begin{align*}
 x&=\mu_{\mathbf p_{-n}}\bigl(\mathcal{A}(\bar n)\bigr),
&
 y&=\mu_{\mathbf p_{-n}}\bigl(\mathcal{A}(n)\bigr),\\
 u&=1-\mu_{\mathbf 1-\mathbf q_{-n}}
       \bigl(\mathcal{A}(\bar n)\bigr),
&
 v&=1-\mu_{\mathbf 1-\mathbf q_{-n}}
       \bigl(\mathcal{A}(n)\bigr).
\end{align*}
It follows from $\mathcal{A}(\bar n)\subseteq\mathcal{A}(n)$ that
\[
 0\leq x\leq y\leq1,
 \qquad
 0\leq v\leq u\leq1.
\]
Since
$
 d_1=\max_{i\in[n]}d_i,
$
 we
have
$
 d_1=\max_{i\in[n-1]}d_i.
$
The induction hypothesis applied separately to
$\mathcal{A}(\bar n)$ and $\mathcal{A}(n)$ therefore gives
\[
 xu\leq d_1,
 \qquad
 yv\leq d_1.
\]

Recall that
\[
 K_i=\frac{(1-p_i)(1-q_i)}{p_iq_i},
 \qquad i\in[n].
\]
For the truncated probability vectors $\mathbf p_{-n}$ and
$\mathbf q_{-n}$, put
\[
 K_{-n}=\min_{i\in[n-1]}K_i,
 \qquad
 h_{K_{-n}}(t)
 =
 \frac{1-t}{1+(K_{-n}-1)t}.
\]
Applying Lemma~\ref{lem:logodds} to the two sections yields
\[
 u\leq h_{K_{-n}}(x),
 \qquad
 v\leq h_{K_{-n}}(y).
\]
Moreover, Lemma~\ref{lem:range}, applied to
$\mathbf p_{-n}$ and $\mathbf q_{-n}$, gives
\[
 \frac{1}{2(K_{-n}+1)}
 <
 d_1
 \leq
 \frac{1}{(1+\sqrt{K_{-n}})^2}.
\]
Thus, all the hypotheses of Lemma~\ref{lem:bellman} are satisfied with
$K=K_{-n}$ and with the parameter $d_1$.

Finally, conditioning on coordinate $n$ gives
\begin{align*}
 \mu_{\mathbf p}(\mathcal{A})
 &=(1-p)x+py,\\
 1-\mu_{\mathbf 1-\mathbf q}(\mathcal{A})
 &=qu+(1-q)v.
\end{align*}
Hence, Lemma~\ref{lem:bellman} implies
\begin{align*}
 \mu_{\mathbf p}(\mathcal{A})
 \bigl(1-\mu_{\mathbf 1-\mathbf q}(\mathcal{A})\bigr)
 &\leq \max\{d_1,pq\}=d_1,
\end{align*}
where the last equality follows from
$pq=p_nq_n=d_n\leq d_1$.  This completes the induction.
\end{proof}

In the following, we complete the proof of the inequality in
Theorem~\ref{thm:main-product} under the present assumption
$
 \mathbf p,\mathbf q\in(0,1/2)^n.
$
By the reduction established above, it suffices to consider increasing
cross-intersecting families $\mathcal{A}$ and $\mathcal{B}$.  In this
case $\mathcal{B}\subseteq\mathcal{A}^*$, and
Lemma~\ref{lem:transversal}, followed by
Proposition~\ref{prop:separation}, gives
\begin{align*}
 \mu_{\mathbf p}(\mathcal{A})\mu_{\mathbf q}(\mathcal{B})
 &\leq
 \mu_{\mathbf p}(\mathcal{A})
 \bigl(1-\mu_{\mathbf 1-\mathbf q}(\mathcal{A})\bigr)\leq
 \max_{i\in[n]}p_iq_i.
\end{align*}

We now determine the equality cases under the same assumption.  Equality
in Theorem~\ref{thm:main-product} requires equality in both inequalities
above.  

 First suppose that $pq=d_1$ in
Lemma~\ref{lem:bellman}. Let $a,b$ be the two roots introduced in its
proof. We claim that the six-region analysis is strict except when
\begin{equation}
 (x,v)=(0,0),
 \qquad\text{or}\qquad
 x,v\in[a,b]\quad\text{and}\quad xv=d_1.
 \label{eq:bellman-equality-points}
\end{equation}

To establish this characterization, consider first the region
$I_1\times I_1$. The expression considered as a function
of $z=xv$ is strictly convex. Its value at $z=a^2$ is strictly smaller
than $d_1$ unless $a=b$, in which case $a^2=d_1$. Its value at
$z=d_1$ is exactly $d_1$.

Since $K>1$, the upper bound in Lemma~\ref{lem:range} gives
$d_1<1/4$.  Hence,
$
 a\leq\sqrt{ab}=\sqrt{d_1}<\frac12.
$
Moreover, using $ab=d_1$ and
$a+b=1-(K-1)d_1$, we obtain
\[
 \left(\frac12-a\right)\left(\frac12-b\right)
 =
 \frac{2(K+1)d_1-1}{4}>0,
\]
where the strict inequality follows from the strict lower bound in
Lemma~\ref{lem:range}.  
Since $a<1/2$, this implies $b<1/2$. Together with $a+b<1$, this gives
\begin{equation*}
 2b(a+b)-1<0.
\end{equation*}

Since $2b(a+b)-1<0$, the endpoint values of $Q_a$ and $Q_b$ used in the proof
of Lemma~\ref{lem:bellman} are positive. Since $Q_a$ and $Q_b$ are affine functions of $x$, positivity at
$x=0$ and $x=a$ implies
\[
 Q_a(x)>0,
 \qquad
 Q_b(x)>0
 \qquad (0\leq x\leq a).
\]
The factorizations \eqref{eq:Qa-factor} and
\eqref{eq:Qb-factor} therefore give
\begin{align*}
 d_1-G_{p,q}(x,a)
 &=
 \frac{a(b-x)Q_a(x)}{pD_x}\geq0,\\
 d_1-G_{p,q}(x,b)
 &=
 \frac{b(a-x)Q_b(x)}{pD_x}\geq0.
\end{align*}
Since $pD_x>0$, equality in either expression is possible only when
$b=x$ in the first or $a=x$ in the second.  For the endpoint $v=a$, equality requires $x=b$.  Since
$x\leq a\leq b$, this forces $x=a=b$, and hence $xv=ab=d_1$.
At the endpoint $v=b$, equality requires
$x=a$, because all the other factors are positive.  Consequently,
$xv=ab=d_1$.
For each fixed $x\in I_0$, the function
$v\mapsto G_{p,q}(x,v)$ is convex on $I_1=[a,b]$, so its maximum is
attained at $v=a$ or $v=b$.  Hence, equality on
$I_0\times I_1$ can occur only at an endpoint with $xv=d_1$.
By the symmetry
$
 G_{p,q}(x,v)=G_{q,p}(v,x),
$
the same conclusion holds on $I_1\times I_0$.

On the three remaining regions
$I_0\times I_0$, $I_0\times I_2$, and $I_2\times I_0$, the boundary
identities obtained in the proof of Lemma~\ref{lem:bellman} give
\begin{align*}
 d_1-G_{p,q}(x,0)
 &=
 \frac{d_1x}{pD_x}
 \left[
  p\bigl(c-q(1-2p)\bigr)+d_1(1-p)x
 \right],\\
 d_1-G_{p,q}(0,v)
 &=
 \frac{d_1v}{D_v}
 \left[
  c+2d_1-p+v(p-d_1)
 \right].
\end{align*}
Here,  since $a, b<1/2$, we have
\[
 q(1-2p)\leq\frac12-2d_1<c
\]
and
\[
 c+2d_1-p>\frac12-p\geq0.
\]
Thus, the first difference is positive for $x>0$, and the second is
positive for $v>0$.  Since $G_{p,q}(0,0)=pq=d_1$, equality on these
boundary edges occurs only at $(0,0)$.
On the curved boundary, we have
\begin{equation*}
 d_1-xh_K(x)
 =
 \frac{d_1(a-x)(b-x)}{d_1+cx},
\end{equation*}
whose only zero for $0\leq x\leq a$ is $x=a$. The second-derivative
argument excludes an equality maximum in the interior. This proves
\eqref{eq:bellman-equality-points}.

The constraints give
$
 u\leq\phi(x)
$
and 
$
 y\leq\phi(v).
$
Replacing $u$ by $\phi(x)$ and $y$ by $\phi(v)$ can only increase
the two factors in \eqref{eq:bellman}.  In an equality case their
product is $d_1>0$, and hence both factors are positive.  Any strict
increase in either $u$ or $y$ would therefore make the product exceed
$d_1$, contradicting Lemma~\ref{lem:bellman}.  Consequently,
$$
 u=\phi(x),
 \qquad
 y=\phi(v).
$$
Combining \eqref{eq:bellman-equality-points} with
$u=\phi(x)$ and $y=\phi(v)$ gives two possibilities.  If
$(x,v)=(0,0)$, then $\phi(0)=1$, so $y=u=1$.  Otherwise,
$x,v\in I_1$ and $xv=d_1$.  Since $\phi(t)=d_1/t$ on $I_1$,
\[
 u=\phi(x)=\frac{d_1}{x}=v,
 \qquad
 y=\phi(v)=\frac{d_1}{v}=x.
\]
Consequently, the equality configurations are
\begin{enumerate}[label=\textup{(\roman*)},leftmargin=2.2em]
 \item $x=v=0$ and $y=u=1$;
 \item $x=y$, $u=v$, and $xu=d_1$.
\end{enumerate}

Since $pq=d_n\leq d_1$, alternative \textup{(i)} can occur only when
$d_n=d_1$; otherwise its value in \eqref{eq:bellman} would be
$pq=d_n<d_1$.  Moreover, we have
$$
 \mathcal{A}(\bar n)=\emptyset,
 \qquad
 \mathcal{A}(n)=2^{[n-1]}.
$$
Hence, $\mathcal{A}=\mathcal{S}_n$.
In alternative \textup{(ii)}, the inclusion
$\mathcal{A}(\bar n)\subseteq\mathcal{A}(n)$ and the equality $x=y$
imply
$$
 \mathcal{A}(\bar n)=\mathcal{A}(n)=:\mathcal{C}.
$$
Moreover, $xu=d_1$ means that $\mathcal{C}$ attains equality in
dimension $n-1$.  By induction,
$\mathcal{C}=\{C\subseteq[n-1]:j\in C\}$ for some $j<n$ with $d_j=d_1$, and hence
$\mathcal{A}=\mathcal{S}_j$.  Thus, equality in
Proposition \ref{prop:separation} occurs precisely for
$$
 \mathcal{A}=\mathcal{S}_j,
 \qquad
 p_jq_j=d_1.
$$
Returning to the cross-intersecting pair, equality forces
$\mathcal{B}=\mathcal{A}^*$ and equality in
Proposition~\ref{prop:separation}.  Therefore,
$\mathcal{A}=\mathcal{S}_j$ for some $j$ with $p_jq_j=d_1$.
Since $\mathcal{S}_j^*=\mathcal{S}_j$, it follows that
$$
 \mathcal{A}=\mathcal{B}=\mathcal{S}_j.
$$
This proves the equality statement of
Theorem~\ref{thm:main-product} for
$\mathbf p,\mathbf q\in(0,1/2)^n$.

Suppose that $pq<d_1$ and that equality holds in
\eqref{eq:bellman}.  Write
\[
 L_{p,q}
 =
 ((1-p)x+py)(qu+(1-q)v).
\]
The constraints used in the proof of Lemma~\ref{lem:bellman} give
$
 y\leq\phi(v),
 u\leq\phi(x).
$
Consequently,
\[
 L_{p,q}
 \leq
 G_{p,q}(x,v)
 =
 \bigl((1-p)x+p\phi(v)\bigr)
 \bigl(q\phi(x)+(1-q)v\bigr)
 \leq d_1.
\]
Since $L_{p,q}=d_1$, equality must hold throughout.  Moreover, both
factors of $L_{p,q}$ are positive because their product is $d_1>0$.
Thus, either strict inequality $y<\phi(v)$ or
$u<\phi(x)$ would strictly increase the corresponding factor, which is
impossible.  Hence, we have
$$
 y=\phi(v),
 \qquad
 u=\phi(x),
 \qquad
 G_{p,q}(x,v)=d_1.
$$
Choose $\widetilde p\geq p$ and $\widetilde q\geq q$, as in the proof
of Lemma~\ref{lem:bellman}, such that
$
 \widetilde p\widetilde q=d_1.
$
Since $x\leq\phi(v)$ and $v\leq\phi(x)$,
$G_{p,q}(x,v)$ is nondecreasing in both $p$ and $q$.  Therefore
\[
 d_1
 =
 G_{p,q}(x,v)
 \leq
 G_{\widetilde p,\widetilde q}(x,v)
 \leq d_1.
\]
Thus, equality also holds for the saturated parameters
$\widetilde p,\widetilde q$, to which
\eqref{eq:bellman-equality-points} applies.
If the saturated configuration were of type~\textup{(i)}, then
$x=v=0$.  Since $\phi(0)=1$, the identities above would give
$y=u=1$, and hence
\[
 L_{p,q}
 =
 ((1-p)0+p)(q+(1-q)0)
 =
 pq<d_1,
\]
a contradiction.  Thus, the saturated configuration must be of type~\textup{(ii)}.
 Hence,
\[
 x=y,\qquad u=v,\qquad xu=d_1.
\]
The same section argument then shows that
$\mathcal{A}=\mathcal{B}=\mathcal{S}_j$ for some $j<n$ with $d_j=d_1$.

\subsection{Boundary equality cases}
\label{sec:endpoint}

The preceding subsection proves Theorem~\ref{thm:main-product} when all
coordinate probabilities are strictly less than $1/2$.  We now extend
the inequality to $(0,1/2]^n$ and determine all equality cases on the
boundary.

\begin{proof}[Completion of the proof of Theorem~\ref{thm:main-product}]
Recall that
\begin{equation*}
 d_i=p_iq_i,
 \qquad
 d_1=\max_{i\in[n]}d_i,
 \qquad
 I=\{i\in[n]:d_i=d_1\}.
\end{equation*}
For $0<r<1$, the vectors $r\mathbf p$ and $r\mathbf q$ belong to
$(0,1/2)^n$, and
$
 \max_{i\in[n]}(rp_i)(rq_i)=r^2d_1.
$
The result in the open range therefore gives
\begin{equation*}
 \mu_{r\mathbf p}(\mathcal{A})
 \mu_{r\mathbf q}(\mathcal{B})
 \leq r^2d_1.
\end{equation*}
Both measures are continuous functions of $r$.  Taking the limit as
$r\to1^{-}$ yields
\begin{equation*}
 \mu_{\mathbf p}(\mathcal{A})
 \mu_{\mathbf q}(\mathcal{B})
 \leq d_1.
\end{equation*}

Suppose that the equality holds.  By
Lemma~\ref{lem:upclosure}, the families
$\mathcal{A}^{\up}$ and $\mathcal{B}^{\up}$ remain cross-intersecting.
Consequently,
$\mathcal{A}$ and $\mathcal{B}$ are increasing.
Set
$
 J=\{i\in[n]:p_i=q_i=1/2\}.
$

\textbf{First suppose that $J=\emptyset$}.  Then
$p_i+q_i<1$ for every $i\in[n]$, and hence
$
 K_i=\frac{(1-p_i)(1-q_i)}{p_iq_i}>1.
$
Although Proposition~\ref{prop:separation} is stated for the open
range, its inductive proof extends to the present setting.  We explain
why each ingredient remains applicable.
The generalized Margulis--Russo formula and the variance estimate were
proved for arbitrary probability vectors in $(0,1)^n$, so they require
no modification.  Moreover, the proof of
Lemma~\ref{lem:logodds} uses the assumptions
$p_i,q_i<1/2$ only to ensure that $p_i+q_i<1$ and hence $K_i>1$.
These properties still hold when $J=\emptyset$.  In particular, the
quantities $s_i(t)$ defined there satisfy
\begin{equation*}
 0<s_i(t)<1,
 \qquad  0\leq t\leq1.
\end{equation*}
Thus,  Lemma~\ref{lem:logodds} continues to hold.
The only change in Lemma~\ref{lem:range} is that its lower bound may
now be attained.  To see this, choose $j\in[n]$ such that $K_j=K$.
Since $p_j,q_j\leq1/2$,
\begin{align*}
 0\leq(1-2p_j)(1-2q_j)=-1+2(K_j+1)d_j.
\end{align*}
Therefore,
\begin{equation*}
 d_j\geq\frac{1}{2(K+1)},
\end{equation*}
and hence
\begin{equation}
 \frac{1}{2(K+1)}
 \leq d_1
 \leq
 \frac{1}{(1+\sqrt K)^2}.
 \label{eq:closed-range}
\end{equation}
The proof of the upper bound is unchanged.  Finally,
Lemma~\ref{lem:bellman} is already stated under the weak range
\eqref{eq:closed-range} and permits its scalar parameters to satisfy
$0<p,q\leq1/2$.  It therefore applies without modification.

For completeness, consider the inductive step in
Proposition~\ref{prop:separation}.  Expose coordinate $n$ and set
$p=p_n$, $q=q_n$.  With the section notation introduced earlier, put
\begin{align*}
 x&=\mu_{\mathbf p_{-n}}\bigl(\mathcal{A}(\bar n)\bigr),
&
 y&=\mu_{\mathbf p_{-n}}\bigl(\mathcal{A}(n)\bigr),\\
 u&=1-\mu_{\mathbf 1-\mathbf q_{-n}}
       \bigl(\mathcal{A}(\bar n)\bigr),
&
 v&=1-\mu_{\mathbf 1-\mathbf q_{-n}}
       \bigl(\mathcal{A}(n)\bigr).
\end{align*}
Since $\mathcal{A}$ is increasing,
$
 \mathcal{A}(\bar n)\subseteq\mathcal{A}(n),
$
and therefore
\begin{equation*}
 0\leq x\leq y\leq1,
 \qquad
 0\leq v\leq u\leq1.
\end{equation*}
Also,
$
 d_1=\max_{i\in[n-1]}d_i.
$
The induction hypothesis gives
$
 xu\leq d_1,
 yv\leq d_1.
$
For the truncated vectors, put
\begin{equation*}
 K_{-n}=\min_{i\in[n-1]}K_i.
\end{equation*}
The endpoint version of Lemma~\ref{lem:logodds} gives
$
 u\leq h_{K_{-n}}(x),
 v\leq h_{K_{-n}}(y),
$
while the preceding extension of Lemma~\ref{lem:range} gives
\begin{equation*}
 \frac{1}{2(K_{-n}+1)}
 \leq d_1
 \leq
 \frac{1}{(1+\sqrt{K_{-n}})^2}.
\end{equation*}
All the hypotheses of Lemma~\ref{lem:bellman} are therefore satisfied.
Since $pq=d_n\leq d_1$, it follows that
\begin{align*}
 \mu_{\mathbf p}(\mathcal{A})
 \bigl(1-\mu_{\mathbf 1-\mathbf q}(\mathcal{A})\bigr)
 &=
 ((1-p)x+py)(qu+(1-q)v)\leq\max\{d_1,pq\}
 =d_1.
\end{align*}
Thus the separation inequality remains valid when $J=\emptyset$.

It remains to verify that equality at the new lower endpoint in
\eqref{eq:closed-range} creates no additional extremizer.  Let $a\leq b$
be the roots introduced in the proof of
Lemma~\ref{lem:bellman}, so that
\begin{equation*}
 ab=d_1,
 \qquad
 a+b=1-(K-1)d_1.
\end{equation*}
If
$
 d_1=\frac{1}{2(K+1)},
$
then
$
 b=\frac12
$ and
$
 a=2d_1<\frac12.
$
Indeed, since $
 F\left(\frac12\right)
 =
 \frac{1}{2(K+1)},
$
the equality $ d_1=\frac{1}{2(K+1)}$ implies that $1/2$ is the larger
root. Hence, $b=1/2$. Using $ab=d_1$, we  obtain
$
 a=\frac{d_1}{b}=2d_1<\frac12,
$
where the last inequality follows from $K>1$.
 If the lower
inequality is strict, then $b<1/2$.  Hence, in every case
\begin{equation*}
 0<a\leq b\leq\frac12,
 \qquad
 a<\frac12,
 \qquad
 a+b<1.
\end{equation*}
In particular,
\begin{equation}
 2b(a+b)-1<0.
 \label{eq:boundary-sign}
\end{equation}

We next check the signs used in the equality analysis of
Lemma~\ref{lem:bellman}.  Under the critical relation
$pq=d_1=ab$, one has
$
 2d_1\leq p\leq\frac12.
$
Recall that
$
 R_a(p)
 =
 p^2+p(b^2-ab-1)+ab
$
and
$  
R_b(p)
 =
 (1-a)p(1-p)-ab(1-b).
$
Then
\begin{align*}
 R_a(2d_1)
 =
 d_1\bigl(2b(a+b)-1\bigr)<0,\qquad
 R_a\left(\frac12\right)=
 \frac{2b(a+b)-1}{4}<0.
\end{align*}
Since $R_a$ is convex, it is negative throughout
$[2d_1,1/2]$.  Similarly,
\begin{align*}
 R_b(2d_1)
 =
 d_1(2a-1)(2d_1-b-1)>0,\qquad
 R_b\left(\frac12\right)
 =
 \frac{1-2a+a(1-2b)^2}{4}
 \geq
 \frac{1-2a}{4}>0.
\end{align*}
Since $R_b$ is concave, it is positive throughout the same interval.
Together with the endpoint formulas for $Q_a$ and $Q_b$, this yields
\begin{equation*}
 Q_a(x)>0,
 \qquad
 Q_b(x)>0,
 \qquad 0\leq x\leq a.
\end{equation*}
Thus, the equality analysis on
$I_0\times I_1$ and $I_1\times I_0$ is unchanged.

On $I_1\times I_1$, the endpoint calculation from the proof of
Lemma~\ref{lem:bellman} gives
\begin{align*}
 d_1-
 \bigl(a+p(b-a)\bigr)
 \bigl(a+q(b-a)\bigr)
 &\geq
 a(b-a)\left(\frac12-b(a+b)\right).
\end{align*}
Since $b\leq1/2$ and $a+b<1$, the right-hand side is positive when
$a<b$.  If $a=b$, then $a^2=d_1$.  Hence, equality on
$I_1\times I_1$ still requires $xv=d_1$.

The possible loss of strictness on the coordinate axes also produces
no new equality.  Under $pq=d_1$, the boundary identities from
Lemma~\ref{lem:bellman} are
\begin{align*}
 d_1-G_{p,q}(x,0)
 &=
 \frac{d_1x}{pD_x}
 \left[
  p\bigl(c-q(1-2p)\bigr)+d_1(1-p)x
 \right],\\
 d_1-G_{p,q}(0,v)
 &=
 \frac{d_1v}{D_v}
 \left[
  c+2d_1-p+v(p-d_1)
 \right].
\end{align*}
Since
$
 c-\left(\frac12-2d_1\right)
 =
 2\left(\frac12-a\right)
  \left(\frac12-b\right)\geq0,
$
we have
$
 q(1-2p)
 =
 q-2d_1
 \leq\frac12-2d_1
 \leq c
$
and
$
 c+2d_1-p
 \geq\frac12-p
 \geq0.
$
If $x>0$, the first bracket contains the strictly positive term
$d_1(1-p)x$.  If $v>0$, the second contains
$v(p-d_1)>0$, because $p-d_1=p(1-q)>0$.  Thus, equality on these
edges occurs only at $(x,v)=(0,0)$. 
On the curved boundary, we have
\begin{equation*}
 d_1-xh_K(x)
 =
 \frac{d_1(a-x)(b-x)}{d_1+cx},
\end{equation*}
whose only zero for $0\leq x\leq a$ is $x=a$.
 Moreover, the interior second-derivative argument is unchanged.  Consequently, the only
critical equality configurations remain
\begin{equation*}
 (x,v)=(0,0),
 \qquad\text{or}\qquad
 x,v\in[a,b]\ \text{and}\ xv=d_1.
\end{equation*}
As in the open case, equality in \eqref{eq:bellman} forces
$
 u=\phi(x)
$
and
$
 y=\phi(v)
$
because replacing $u$ and $y$ by these larger values cannot strictly
increase either positive factor.  Hence, the two possible configurations
are
\begin{enumerate}[label=\textup{(\roman*)},leftmargin=2.2em]
 \item $x=v=0$ and $y=u=1$;
 \item $x=y$, $u=v$, and $xu=d_1$.
\end{enumerate}
The parameter-reduction argument for $pq<d_1$ is also unchanged, since
the monotonicity used there was proved on the full range
$0<p,q\leq1/2$.  It rules out type~\textup{(i)} when $pq<d_1$.

In type~\textup{(i)}, we have $
 \mathcal{A}(\bar n)=\emptyset$ and $
 \mathcal{A}(n)=2^{[n-1]}$.
Thus, $\mathcal{A}=\mathcal{S}_n$ and necessarily $d_n=d_1$.  In
type~\textup{(ii)}, we have
$
 \mathcal{A}(\bar n)=\mathcal{A}(n),
$
and the common section attains equality in dimension $n-1$.
Induction therefore shows that equality in the endpoint extension of
Proposition~\ref{prop:separation} occurs precisely when
$
 \mathcal{A}=\mathcal{S}_j
$
for some $j\in I$.
Returning to the cross-intersecting pair, equality in the transversal
reduction forces $\mathcal{B}=\mathcal{A}^*$.  Since
$\mathcal{S}_j^*=\mathcal{S}_j$, it follows that
$
 \mathcal{A}=\mathcal{B}=\mathcal{S}_j.
$
This proves part~\textup{(i)} of Theorem~\ref{thm:main-product}.

\textbf{Now suppose that $J\neq\emptyset$}.  Then
$
 d_1=\frac14
$
and
$
 I=J$
because $p_iq_i=1/4$ with $p_i,q_i\leq1/2$ holds if and only if
$p_i=q_i=1/2$.
Put
\begin{equation*}
 \alpha=\mu_{\mathbf p}(\mathcal{A}),
 \qquad
 \beta=\mu_{\mathbf 1-\mathbf q}(\mathcal{A}).
\end{equation*}
\textbf{For an increasing family, its product measure is nondecreasing in each
coordinate probability.}  To see this, let
$
 \mathbf r=(r_1,\ldots,r_n)\in(0,1)^n
$
and fix $i\in[n]$.  For $s_i\in[r_i,1)$, define
$
 \mathbf r^{(i)}
 =
 (r_1,\ldots,r_{i-1},s_i,r_{i+1},\ldots,r_n).
$
Recall that $\mathbf r_{-i}$ is the vector obtained from $\mathbf r$ by
deleting its $i$th coordinate.  Conditioning on coordinate $i$ gives
\begin{align*}
 \mu_{\mathbf r^{(i)}}(\mathcal{A})
 -
 \mu_{\mathbf r}(\mathcal{A})
 &=
 (s_i-r_i)
 \left(
  \mu_{\mathbf r_{-i}}\bigl(\mathcal{A}(i)\bigr)
  -
  \mu_{\mathbf r_{-i}}\bigl(\mathcal{A}(\bar i)\bigr)
 \right)\geq0,
\end{align*}
because $\mathcal{A}(\bar i)\subseteq\mathcal{A}(i)$.
Thus, $\mu_{\mathbf r}(\mathcal{A})$ is nondecreasing in every
coordinate of $\mathbf r$.
Since $\mathbf p\leq\mathbf 1-\mathbf q$ coordinatewise and
$\mathcal{A}$ is increasing, we have $\alpha\leq\beta$. 
Moreover,
$\mathcal{B}\subseteq\mathcal{A}^*$.  Applying Lemma~\ref{lem:transversal} yields
\begin{equation*}
 \frac14
 =
 \alpha\mu_{\mathbf q}(\mathcal{B})
 \leq
 \alpha\mu_{\mathbf q}(\mathcal{A}^*)
 =
 \alpha(1-\beta)
 \leq
 \alpha(1-\alpha)
 \leq
 \frac14.
\end{equation*}
 Consequently,
$$
 \alpha=\beta=\frac12,
 \qquad
 \mathcal{B}=\mathcal{A}^*.
 $$

It remains to determine the coordinates on which $\mathcal{A}$ depends.
Fix $i\notin J$, and let $\mathbf p^{(i)}$ be obtained from
$\mathbf p$ by replacing $p_i$ with $1-q_i$.  Then
$
 \mathbf p\leq\mathbf p^{(i)}
 \leq\mathbf 1-\mathbf q.
$
Monotonicity and $\alpha=\beta$ imply
\begin{equation*}
 \mu_{\mathbf p^{(i)}}(\mathcal{A})
 =
 \mu_{\mathbf p}(\mathcal{A}).
\end{equation*}
Conditioning on coordinate $i$ gives
\begin{align*}
 0
 =
 \mu_{\mathbf p^{(i)}}(\mathcal{A})
 -
 \mu_{\mathbf p}(\mathcal{A})=
 (1-q_i-p_i)
 \left(
 \mu_{\mathbf p_{-i}}\bigl(\mathcal{A}(i)\bigr)
 -
 \mu_{\mathbf p_{-i}}\bigl(\mathcal{A}(\bar i)\bigr)
 \right).
\end{align*}
Since $i\notin J$, at least one of $p_i,q_i$ is strictly less than
$1/2$, and hence
$
 1-q_i-p_i>0.
$
It follows from 
$\mathcal{A}(\bar i)\subseteq\mathcal{A}(i)$ that
\begin{equation*}
 \mathcal{A}(\bar i)=\mathcal{A}(i).
\end{equation*}
Thus, membership in $\mathcal{A}$ is independent of every coordinate
outside $J$.  Consequently, there is an increasing family
$\mathcal{H}\subseteq2^J$ such that
\begin{equation*}
 \mathcal{A}
 =
 \{A\subseteq[n]:A\cap J\in\mathcal{H}\}.
\end{equation*}
All coordinates in $J$ have probability $1/2$. Then
$
 \frac12
 =
 \mu_{\mathbf p}(\mathcal{A})
 =
 \frac{|\mathcal{H}|}{2^{|J|}}.
$
Therefore,
\begin{equation*}
 |\mathcal{H}|=2^{|J|-1}.
\end{equation*}
Let
$
 \mathcal{H}^*
 =
 \{T\subseteq J:T\cap H\neq\emptyset
   \text{ for every }H\in\mathcal{H}\}.
$
The equality $\mathcal{B}=\mathcal{A}^*$ now gives
\begin{equation*}
 \mathcal{B}
 =
 \{B\subseteq[n]:B\cap J\in\mathcal{H}^*\}.
\end{equation*}
This proves the asserted equality structure.

Conversely, let $\mathcal{H}\subseteq2^J$ be increasing with
$|\mathcal{H}|=2^{|J|-1}$, and define
\begin{equation*}
 \mathcal{A}
 =
 \{A\subseteq[n]:A\cap J\in\mathcal{H}\},
 \qquad
 \mathcal{B}
 =
 \{B\subseteq[n]:B\cap J\in\mathcal{H}^*\}.
\end{equation*}
These families are cross-intersecting by the definition of
$\mathcal{H}^*$.  Applying Lemma~\ref{lem:transversal} on the ground
set $J$ gives
\begin{equation*}
 \mathcal{H}^*
 =
 \{T\subseteq J:J\setminus T\notin\mathcal{H}\}.
\end{equation*}
Since $|\mathcal{H}|=2^{|J|-1}$, we have
\begin{equation*}
 |\mathcal{H}^*|
 =
 2^{|J|-1}.
\end{equation*}
Let $X$ have distribution $\mu_{\mathbf p}$.  Since $p_i=1/2$ for
every $i\in J$, the random set $X\cap J$ is uniformly distributed over
$2^J$.  
Hence,
\[
 \mu_{\mathbf p}(\mathcal{A})
 =
 \mathbb{P}(X\cap J\in\mathcal{H})
 =
 \frac{|\mathcal{H}|}{2^{|J|}}
 =
 \frac12.
\]
Similarly, we have
$
 \mu_{\mathbf q}(\mathcal{B})
 =
 \frac{|\mathcal{H}^*|}{2^{|J|}}
 =
 \frac12.
$
Their product is $1/4=d_1$, proving the converse and completing the
proof.
\end{proof}

\begin{remark}
Additional non-star equality cases first occur when $|J|\geq3$. Indeed, if $|J|=1$, the only increasing family of size
$2^{|J|-1}=1$ is the one-coordinate star.  If $|J|=2$, write
$J=\{i,j\}$.  An increasing family
$\mathcal{H}\subseteq2^J$ of size two cannot contain
$\emptyset$ and must contain $J$.  Hence,
\[
 \mathcal{H}
 =
 \bigl\{\{i\},J\bigr\}
 \qquad\text{or}\qquad
 \mathcal{H}
 =
 \bigl\{\{j\},J\bigr\}.
\]
In either case $\mathcal{H}$ is a star and
$\mathcal{H}^*=\mathcal{H}$.  Thus, no non-star equality case occurs
when $|J|\leq2$. For
 $|J|\geq3$, if $\{1,2,3\}\subseteq J$, let
\begin{equation*}
 \mathcal{H}
 =
 \{H\subseteq J:|H\cap\{1,2,3\}|\geq2\}.
\end{equation*}
Then $\mathcal{H}=\mathcal{H}^*$ and
$|\mathcal{H}|=2^{|J|-1}$.  Consequently, the two families
\begin{equation*}
 \mathcal{A}
 =
 \{A\subseteq[n]:A\cap J\in\mathcal{H}\},
 \qquad
 \mathcal{B}
 =
 \{B\subseteq[n]:B\cap J\in\mathcal{H}^*\}
\end{equation*}
coincide, have measure $1/2$, and attain $d_1=1/4$, although they are
not stars.
\end{remark}

\section{Proof of Theorem~\ref{thm:main-stability}}\label{se3}

Throughout this section, put
$$
 p=p_1,\qquad q=q_1,\qquad
 \mu_1=\mu_{\mathbf p},\qquad
 \mu_2=\mu_{\mathbf q}.
$$
Thus,
$$
 0<q\leq p<\frac12,\qquad
 p_\ell\leq p,\qquad q_\ell\leq q
 \quad\text{for every }\ell\in[n].
$$
With this notation, the hypothesis of the theorem becomes
$
 \mu_1(\mathcal A)\mu_2(\mathcal B)
 \geq(1-\varepsilon)pq.
$

\subsection{A semidefinite estimate}
\label{sec:semidefinite-estimate}

The first step is to show that the preceding near-extremal inequality
forces the indicator functions of
$\mathcal A$ and $\mathcal B$ to have little Fourier weight above
degree one.  This will follow from an explicit semidefinite
estimate.
To treat the two measures simultaneously, set
$$
 \mathbf r_1=\mathbf p,\qquad
 \mathbf r_2=\mathbf q.
$$
For $i\in\{1,2\}$ and $\ell\in[n]$, let
$r_i^{(\ell)}$ denote the $\ell$-th coordinate of $\mathbf r_i$,
and put
$$
 \overline r_i^{(\ell)}=1-r_i^{(\ell)},
 \qquad
 c_i^{(\ell)}
 =
 \sqrt{\frac{r_i^{(\ell)}}{\overline r_i^{(\ell)}}}.
$$
For one coordinate $\ell$, define
$$
 A_{ij}^{(\ell)}
 =
 \begin{pmatrix}
  1-\dfrac{r_j^{(\ell)}}{\overline r_i^{(\ell)}}&
  \dfrac{r_j^{(\ell)}}{\overline r_i^{(\ell)}}\\[5pt]
  1&0
 \end{pmatrix},
 \qquad
 D_{ij}^{(\ell)}
 =
 \begin{pmatrix}
  1&0\\
  0&-c_i^{(\ell)}c_j^{(\ell)}
 \end{pmatrix},
$$
and
$$
 V_i^{(\ell)}
 =
 \begin{pmatrix}
  1&c_i^{(\ell)}\\[2pt]
  1&-1/c_i^{(\ell)}
 \end{pmatrix},
 \qquad
 \Delta_i^{(\ell)}
 =
 \begin{pmatrix}
  \overline r_i^{(\ell)}&0\\
  0&r_i^{(\ell)}
 \end{pmatrix}.
$$
The rows and columns in these matrices are indexed by
$\emptyset$ and $\{\ell\}$.   Let
$A_{ij},D_{ij},V_i,\Delta_i$ be the tensor products, over all
$\ell\in[n]$, of the corresponding one-coordinate matrices.
We identify each set $S\subseteq[n]$ with its indicator vector
$(\mathbf 1_{\{1\in S\}},\ldots,\mathbf 1_{\{n\in S\}})\in\{0,1\}^n$
and use the lexicographic order on these vectors. Thus, if
$M=M^{(1)}\otimes\cdots\otimes M^{(n)}$, then its rows and columns
are indexed by subsets $S,T\subseteq[n]$, and
$$
 M_{S,T}
 =
 \prod_{\ell=1}^n
 M^{(\ell)}_{\mathbf 1_{\{\ell\in S\}},
                 \mathbf 1_{\{\ell\in T\}}}.
$$
In particular, $\Delta_i$ is diagonal and
$$
 (\Delta_i)_{S,S}
 =
 \prod_{\ell\in S}r_i^{(\ell)}
 \prod_{\ell\notin S}\bigl(1-r_i^{(\ell)}\bigr)
 =
 \mu_i(\{S\}).
$$
Moreover, $D_{ij}$ is diagonal, with
$$
 (D_{ij})_{T,T}
 =
 \prod_{\ell\in T}
 \bigl(-c_i^{(\ell)}c_j^{(\ell)}\bigr).
$$
The entries of $A_{ij}$ are given by
$$
 (A_{ij})_{S,T}
 =
 \begin{cases}
 \displaystyle
 \prod_{\ell\in T}
 \frac{r_j^{(\ell)}}{1-r_i^{(\ell)}}
 \prod_{\ell\notin S\cup T}
 \frac{1-r_i^{(\ell)}-r_j^{(\ell)}}
      {1-r_i^{(\ell)}},
 & S\cap T=\emptyset,\\[12pt]
 0,
 & S\cap T\neq\emptyset.
 \end{cases}
$$
Consequently,
$$
 (\Delta_iA_{ij})_{S,T}
 =
 \begin{cases}
 \displaystyle
 \prod_{\ell\in S}r_i^{(\ell)}
 \prod_{\ell\in T}r_j^{(\ell)}
 \prod_{\ell\notin S\cup T}
 \bigl(1-r_i^{(\ell)}-r_j^{(\ell)}\bigr),
 & S\cap T=\emptyset,\\[10pt]
 0,
 & S\cap T\neq\emptyset.
 \end{cases}
$$

\begin{lemma}\label{lem:semidefinite-identities}
For $i,j\in\{1,2\}$,
$$
 A_{ij}V_j=V_iD_{ij},
 \qquad
 V_i^{\mathsf T}\Delta_iV_i=I,
 \qquad
 (\Delta_iA_{ij})^{\mathsf T}=\Delta_jA_{ji}.
$$
If $ J$ denotes the $2^n\times2^n$ all-one matrix, then
$$
 V_i^{\mathsf T}\Delta_i J\Delta_jV_j
 =
 E_{\emptyset,\emptyset},
$$
where $E_{\emptyset,\emptyset}$ has a single nonzero entry,
equal to one, in position $(\emptyset,\emptyset)$.
\end{lemma}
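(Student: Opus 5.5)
All four identities are compatible with tensor products. If $M=\bigotimes_\ell M^{(\ell)}$ and $N=\bigotimes_\ell N^{(\ell)}$, then $MN=\bigotimes_\ell M^{(\ell)}N^{(\ell)}$ and $M^{\mathsf T}=\bigotimes_\ell (M^{(\ell)})^{\mathsf T}$. The all-one matrix also factors as $J=\bigotimes_\ell J^{(\ell)}$, where $J^{(\ell)}$ is the $2\times2$ all-one matrix, and $E_{\emptyset,\emptyset}=\bigotimes_\ell E^{(\ell)}_{00}$. So the plan is to verify each identity for a single coordinate $\ell$ and then take tensor products. From here on I drop the superscript $(\ell)$ and write $r_i,\ c_i,\ \overline r_i=1-r_i$.

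\textbf{Eigen-relation $A_{ij}V_j=V_iD_{ij}$.} Multiply out both sides column by column.

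First column of $V_j$, which is $(1,1)^{\mathsf T}$: $A_{ij}$ has row sums $1$ and $1$, so $A_{ij}(1,1)^{\mathsf T}=(1,1)^{\mathsf T}$. This equals the first column of $V_iD_{ij}$.

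Second column of $V_j$, which is $(c_j,-1/c_j)^{\mathsf T}$: the image under $A_{ij}$ is
$$\bigl((1-r_j/\overline r_i)c_j-(r_j/\overline r_i)/c_j,\ c_j\bigr)^{\mathsf T}.$$
The second column of $V_iD_{ij}$ is $-c_ic_j\,(c_i,-1/c_i)^{\mathsf T}=(-c_i^2c_j,\ c_j)^{\mathsf T}$. The second entries agree. For the first, use $c_j^2=r_j/\overline r_j$, so that $(r_j/\overline r_i)/c_j=c_j\,\overline r_j/\overline r_i$. The first entry becomes
$$c_j\bigl(1-(r_j+\overline r_j)/\overline r_i\bigr)=c_j\bigl(1-1/\overline r_i\bigr)=-c_j\,r_i/\overline r_i=-c_i^2c_j,$$
as required.

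\textbf{Orthonormality $V_i^{\mathsf T}\Delta_iV_i=I$.} The $(1,1)$ entry is $\overline r_i+r_i=1$. The off-diagonal entry is $\overline r_ic_i-r_i/c_i$, which vanishes because $c_i^2=r_i/\overline r_i$. The $(2,2)$ entry is $\overline r_ic_i^2+r_i/c_i^2=r_i+\overline r_i=1$.

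\textbf{Symmetry $(\Delta_iA_{ij})^{\mathsf T}=\Delta_jA_{ji}$.} Compute
$$\Delta_iA_{ij}=\begin{pmatrix}\overline r_i-r_j& r_j\\ r_i&0\end{pmatrix}.$$
Its transpose is $\begin{pmatrix}1-r_i-r_j& r_i\\ r_j&0\end{pmatrix}$, which is exactly $\Delta_jA_{ji}$. Alternatively, one can read this off from the displayed product formula for $(\Delta_iA_{ij})_{S,T}$: swapping $(S,i)$ with $(T,j)$ leaves the formula unchanged.

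\textbf{The $J$ identity.} Write $J^{(\ell)}=\mathbf 1\mathbf 1^{\mathsf T}$. Then $V_i^{\mathsf T}\Delta_iJ\Delta_jV_j=(V_i^{\mathsf T}\Delta_i\mathbf 1)(V_j^{\mathsf T}\Delta_j\mathbf 1)^{\mathsf T}$. Now $\Delta_i\mathbf 1=(\overline r_i,r_i)^{\mathsf T}$, and hence
$$V_i^{\mathsf T}\Delta_i\mathbf 1=\bigl(\overline r_i+r_i,\ c_i\overline r_i-r_i/c_i\bigr)^{\mathsf T}=(1,0)^{\mathsf T},$$
by the same cancellation as in the orthonormality step. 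The outer product $(1,0)^{\mathsf T}(1,0)$ is $E_{00}$, and tensoring over $\ell$ gives $E_{\emptyset,\emptyset}$.

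None of these steps is a real obstacle. The only points needing care are the algebraic simplification in the second column of the eigen-relation, and the justification that the tensor-product entries agree with the lexicographic indexing fixed just before the lemma.
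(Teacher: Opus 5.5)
Your proposal is correct and follows essentially the same route as the paper: you verify each identity for the one-coordinate $2\times2$ matrices using $c_k^2=r_k/\overline r_k$, and pass to $2^n\times2^n$ via the mixed-product and transpose rules for tensor products, with the $J$ identity coming from $V_i^{\mathsf T}\Delta_i\mathbf 1=(1,0)^{\mathsf T}$ in each coordinate. Your column-by-column check of $A_{ij}V_j=V_iD_{ij}$ (using that $A_{ij}$ has unit row sums) and your one-coordinate handling of $J=\bigotimes_\ell J^{(\ell)}$ are only cosmetic variants of the paper's direct multiplication and its use of $V_i^{\mathsf T}\Delta_i\mathbf 1=e_{\emptyset}$.
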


\begin{proof}
For each $\ell\in[n]$, we first verify the three identities for the
corresponding one-coordinate matrices. Since
$
 \bigl(c_k^{(\ell)}\bigr)^2
 =
 \frac{r_k^{(\ell)}}{\overline r_k^{(\ell)}},
$
we have
$
 r_j^{(\ell)}
 \left(
  c_j^{(\ell)}+\frac{1}{c_j^{(\ell)}}
 \right)
 =
 c_j^{(\ell)}.
$
Therefore, direct multiplication gives
$$
\begin{aligned}
 A_{ij}^{(\ell)}V_j^{(\ell)}
 &=
 \begin{pmatrix}
  1-\dfrac{r_j^{(\ell)}}{\overline r_i^{(\ell)}}&
  \dfrac{r_j^{(\ell)}}{\overline r_i^{(\ell)}}\\[5pt]
  1&0
 \end{pmatrix}
 \begin{pmatrix}
  1&c_j^{(\ell)}\\[2pt]
  1&-1/c_j^{(\ell)}
 \end{pmatrix}\\
 &=
 \begin{pmatrix}
  1&-\bigl(c_i^{(\ell)}\bigr)^2c_j^{(\ell)}\\
  1&c_j^{(\ell)}
 \end{pmatrix}
 =
 V_i^{(\ell)}D_{ij}^{(\ell)}.
\end{aligned}
$$
Similarly,
$$
\begin{aligned}
 \bigl(V_i^{(\ell)}\bigr)^{\mathsf T}
 \Delta_i^{(\ell)}V_i^{(\ell)}
 &=
 \begin{pmatrix}
  \overline r_i^{(\ell)}+r_i^{(\ell)}&
  \overline r_i^{(\ell)}c_i^{(\ell)}
  -r_i^{(\ell)}/c_i^{(\ell)}\\[2pt]
  \overline r_i^{(\ell)}c_i^{(\ell)}
  -r_i^{(\ell)}/c_i^{(\ell)}&
  \overline r_i^{(\ell)}\bigl(c_i^{(\ell)}\bigr)^2
  +r_i^{(\ell)}/\bigl(c_i^{(\ell)}\bigr)^2
 \end{pmatrix}.
\end{aligned}
$$
The two off-diagonal entries are zero, since
$
 \overline r_i^{(\ell)}c_i^{(\ell)}
 -\frac{r_i^{(\ell)}}{c_i^{(\ell)}}
 =
 \frac{
  \overline r_i^{(\ell)}\bigl(c_i^{(\ell)}\bigr)^2
  -r_i^{(\ell)}
 }{c_i^{(\ell)}}
 =0.
$
The two diagonal entries are equal to one, because
$
 \overline r_i^{(\ell)}+r_i^{(\ell)}=1
$
and
$
 \overline r_i^{(\ell)}\bigl(c_i^{(\ell)}\bigr)^2
 +\frac{r_i^{(\ell)}}{\bigl(c_i^{(\ell)}\bigr)^2}
 =
 r_i^{(\ell)}+\overline r_i^{(\ell)}
 =1.
$
Hence,
$$
 \bigl(V_i^{(\ell)}\bigr)^{\mathsf T}
 \Delta_i^{(\ell)}V_i^{(\ell)}
 =I.
$$
Furthermore,
$$
\begin{aligned}
 \Delta_i^{(\ell)}A_{ij}^{(\ell)}
 =
 \begin{pmatrix}
  \overline r_i^{(\ell)}&0\\
  0&r_i^{(\ell)}
 \end{pmatrix}
 \begin{pmatrix}
  1-\dfrac{r_j^{(\ell)}}{\overline r_i^{(\ell)}}&
  \dfrac{r_j^{(\ell)}}{\overline r_i^{(\ell)}}\\[5pt]
  1&0
 \end{pmatrix}=
 \begin{pmatrix}
  1-r_i^{(\ell)}-r_j^{(\ell)}&r_j^{(\ell)}\\
  r_i^{(\ell)}&0
 \end{pmatrix}.
\end{aligned}
$$
Consequently,
$$
 \bigl(\Delta_i^{(\ell)}A_{ij}^{(\ell)}\bigr)^{\mathsf T}
 =
 \Delta_j^{(\ell)}A_{ji}^{(\ell)}.
$$
Taking tensor products over all $\ell\in[n]$ now gives
$$
 A_{ij}V_j=V_iD_{ij},
 \qquad
 V_i^{\mathsf T}\Delta_iV_i=I,
 \qquad
 (\Delta_iA_{ij})^{\mathsf T}=\Delta_jA_{ji}.
$$
Finally, let
$
 \mathbf 1^{(\ell)}=\begin{pmatrix}1\\1\end{pmatrix}$
and
$
 \mathbf 1=\bigotimes_{\ell=1}^n\mathbf 1^{(\ell)}.
$
For every $\ell\in[n]$, we have
$
 \bigl(V_i^{(\ell)}\bigr)^{\mathsf T}
 \Delta_i^{(\ell)}\mathbf 1^{(\ell)}
 =
 \begin{pmatrix}1\\0\end{pmatrix}.
$
It follows that
$$
 V_i^{\mathsf T}\Delta_i\mathbf 1
 =
 \bigotimes_{\ell=1}^n
 \begin{pmatrix}1\\0\end{pmatrix}
 =
 e_{\emptyset}.
$$
where $e_{\emptyset}$ is the standard basis vector indexed by $\emptyset$.
Since $ J=\mathbf 1\mathbf 1^{\mathsf T}$, we conclude that
$$
\begin{aligned}
 V_i^{\mathsf T}\Delta_i\mathsf J\Delta_jV_j
 =
 \bigl(V_i^{\mathsf T}\Delta_i\mathbf 1\bigr)
 \bigl(\mathbf 1^{\mathsf T}\Delta_jV_j\bigr)=
 e_{\emptyset}e_{\emptyset}^{\mathsf T}
 =
 E_{\emptyset,\emptyset},
\end{aligned}
$$
as required.
\end{proof}

Set
$
 s=\frac{\sqrt{pq}}2.
$
Choose a positive number $\tau$, whose size will be fixed below, and
define
$$
 \xi_2=\tau,\qquad
 \xi_1
 =
 \frac{q}{p}\,\tau
 +\frac{(p-q)q}{2\sqrt{pq}},
 \qquad
 \eta
 =
 \frac q{\sqrt{pq}}\,\tau+\frac{1-q}{2}.
$$
Consider the  block matrix
$$
  S
 =
 \begin{pmatrix}
  s\Delta_1-\xi_1\Delta_1A_{11}&
  \eta\Delta_1A_{12}
  -\frac12\Delta_1\mathsf J\Delta_2\\[2pt]
  \eta\Delta_2A_{21}
  -\frac12\Delta_2\mathsf J\Delta_1&
  s\Delta_2-\xi_2\Delta_2A_{22}
 \end{pmatrix}.
$$
By Lemma~\ref{lem:semidefinite-identities}, $\mathsf S$ is symmetric.
Let
$$
 Z
 =
 \begin{pmatrix}
  \xi_1\Delta_1A_{11}&0\\
  0&\xi_2\Delta_2A_{22}
 \end{pmatrix}.
$$
Since $\xi_1,\xi_2>0$ and $r_i^{(\ell)}<1/2$, every entry of
$A_{ii}^{(\ell)}$, and hence every entry of $Z$, is
nonnegative.

For $T\subseteq[n]$, define
$$
 \lambda_{ij}(T)
 =
 \prod_{\ell\in T}
 \bigl(-c_i^{(\ell)}c_j^{(\ell)}\bigr),
$$
where the empty product is one.  By Lemma~\ref{lem:semidefinite-identities},
\begin{align}
\begin{pmatrix}V_1&0\\0&V_2\end{pmatrix}^{\mathsf T}
 S
\begin{pmatrix}V_1&0\\0&V_2\end{pmatrix}=
\begin{pmatrix}
 sI-\xi_1D_{11}&
 \eta D_{12}-\frac12E_{\emptyset,\emptyset}\\
 \eta D_{21}-\frac12E_{\emptyset,\emptyset}&
 sI-\xi_2D_{22}
\end{pmatrix}.\label{ff1}
\end{align}
Each matrix appearing on the right-hand side is diagonal in the basis
$\{e_T:T\subseteq[n]\}$, where  $e_{T}$ is the standard basis vector indexed by $T$. Hence, each two-dimensional subspace
$$
 \operatorname{span}\{(e_T,0),(0,e_T)\}
$$
is invariant. After ordering the basis as
$(e_T,0),(0,e_T)$ for each $T\subseteq[n]$, the transformed matrix is
the direct sum of one $2\times2$ block for each $T\subseteq[n]$.  Since
$V_i^{\mathsf T}\Delta_iV_i=I$, each $V_i$ is invertible.
For a real symmetric matrix $ M$, we write
$ M\succeq0$ if $ M$ is positive semidefinite.
Since each $V_i$ is invertible, the preceding block decomposition
shows that $ S\succeq0$ if and only if every resulting
$2\times2$ block is positive semidefinite.  The block corresponding to
$T=\emptyset$ is
$$
 M_{\emptyset}
 =
 \begin{pmatrix}
  s-\xi_1&\eta-1/2\\
  \eta-1/2&s-\xi_2
 \end{pmatrix},
$$
whereas, for $T\neq\emptyset$, it is
$$
 M_T
 =
 \begin{pmatrix}
  s-\xi_1\lambda_{11}(T)&
  \eta\lambda_{12}(T)\\
  \eta\lambda_{12}(T)&
  s-\xi_2\lambda_{22}(T)
 \end{pmatrix}.
$$

\begin{lemma}\label{lem:semidefinite-gap}
For $\tau>0$ sufficiently small, depending only on $p$ and $q$,
the matrix $ S$ is positive semidefinite.  Moreover, there is
a constant $\gamma=\gamma(p,q)>0$ such that
$$
 M_T\succeq\gamma I_2
 \qquad\text{whenever }|T|\geq2.
$$
\end{lemma}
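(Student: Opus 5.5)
The plan is to check positive semidefiniteness block by block. Conjugating by the invertible matrix $\operatorname{diag}(V_1,V_2)$ as in \eqref{ff1}, $S\succeq0$ holds if and only if $M_T\succeq0$ for every $T\subseteq[n]$. A symmetric $2\times2$ matrix is positive semidefinite exactly when its diagonal entries and its determinant are nonnegative. Write
$$
P_\ell=\frac{p_\ell}{1-p_\ell},\qquad Q_\ell=\frac{q_\ell}{1-q_\ell},\qquad P=\frac p{1-p},\qquad Q=\frac q{1-q}.
$$
Since $q\le p<1/2$ and $p_\ell\le p$, $q_\ell\le q$, we have $Q\le P<1$. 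Then
$$
\lambda_{11}(T)=\prod_{\ell\in T}(-P_\ell),\qquad \lambda_{22}(T)=\prod_{\ell\in T}(-Q_\ell),\qquad \lambda_{12}(T)=\prod_{\ell\in T}\bigl(-\sqrt{P_\ell Q_\ell}\bigr).
$$
I would treat the three cases $T=\emptyset$, $|T|=1$ and $|T|\ge2$ separately. In each case I would first compute at $\tau=0$ and then argue by continuity in $\tau$, using constants that depend only on $p$ and $q$.

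\emph{The case $T=\emptyset$.} A direct computation gives
$$
s-\xi_1=\frac{q^2}{2\sqrt{pq}}-\frac q p\tau,\qquad s-\xi_2=\frac{\sqrt{pq}}2-\tau,\qquad \eta-\frac12=-\frac q2+\frac{q}{\sqrt{pq}}\tau.
$$
Expanding the determinant in powers of $\tau$, the constant, linear and quadratic terms all cancel. Hence $\det M_\emptyset\equiv0$, and $M_\emptyset$ is a rank-one positive semidefinite matrix as soon as $\tau<\min\{pq^2/(2\sqrt{pq}\,q),\sqrt{pq}/2\}$. This rank-one degeneracy is expected: it encodes the extremality of the constant part.

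\emph{The case $T=\{\ell\}$.} Both diagonal entries are at least $s$, so positivity of the diagonal is automatic. The determinant is
$$
f(x,y)=(s+\xi_1x)(s+\xi_2y)-\eta^2xy,\qquad x=P_\ell\in[0,P],\quad y=Q_\ell\in[0,Q].
$$
The function $f$ is bilinear in $(x,y)$, so it suffices to check the four corners of $[0,P]\times[0,Q]$. At $\tau=0$ the corner $(P,Q)$ gives exactly $0$, which reflects the tight coordinate $\ell=1$. For $\tau>0$ I would check directly that $f(P,Q)\ge0$. I expect the same kind of cancellation as for $M_\emptyset$: the choices of $\xi_1$ and $\eta$ appear tailored so that $f(P,Q)$ is $0$ or of order $\tau^2$ with a favourable sign. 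At the other three corners $f$ stays bounded below by a positive constant at $\tau=0$, and hence remains positive for small $\tau$.

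\emph{The case $|T|\ge2$.} This is the main obstacle, because we need a uniform gap $\gamma$ independent of $n$ and of $|T|$. We have the bounds
$$
|\lambda_{11}(T)|\le P^{|T|},\qquad |\lambda_{22}(T)|\le Q^{|T|},\qquad |\lambda_{12}(T)|\le (PQ)^{|T|/2}.
$$
When $|T|$ is odd, the diagonal entries are at least $s$. When $|T|$ is even, they are at least $s-\xi_1P^2$ and $s-\xi_2Q^2$. The off-diagonal entry is at most $\eta(PQ)^{|T|/2}$ in absolute value. The worst case should be $|T|=2$ with all coordinates extremal. So I would prove the strict inequality
$$
(s-\xi_1P^2)\,s>\eta^2P^2Q^2\qquad\text{at }\tau=0,
$$
using the identity $\eta^2PQ=s^2+s\xi_1P$ from the one-coordinate computation. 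This reduces the claim to $s(1-PQ)>\xi_1P^2(1+Q)$, that is,
$$
p(1-PQ)>(p-q)P^2(1+Q).
$$
I would verify this using $p<1/2$, so that $P<1$, together with $q\le p$. For odd $|T|$, and for $|T|\ge3$ in general, the estimates only improve, because $(PQ)^{|T|/2}$ decays geometrically.

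To finish, I would take a minimum of the eigenvalue lower bounds over the compact range of parameters. This gives $\gamma(p,q)>0$ at $\tau=0$, and after shrinking $\tau$, say so that the perturbation is at most $\gamma/2$, we get the same conclusion for $\tau>0$ with $\gamma$ replaced by $\gamma/2$. If the inequality $p(1-PQ)>(p-q)P^2(1+Q)$ fails when $q\ll p$ and $p$ is near $1/2$, I would fall back on exploiting the sign pattern: for even $|T|$ one can still have $\lambda_{12}>0$ but also use $\xi_2$ and $\tau$. Alternatively, I would re-choose the $\tau$-dependence so that the strict gap comes from the $\tau$-linear terms.
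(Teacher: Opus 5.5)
Your plan is correct and is essentially the paper's proof: the same block decomposition, the rank-one $M_{\emptyset}$ for $\tau\le s$, a corner check of the bilinear determinant for the singleton blocks, and for $|T|\ge2$ a determinant bound at $\tau=0$ (using $|\lambda_{11}(T)|\le P^{|T|}$, $|\lambda_{22}(T)|\le Q^{|T|}$) followed by a uniform $O(\tau)$ perturbation; your single worst-case inequality at $|T|=2$ also dominates the odd case $|T|\ge3$, which the paper treats separately. The two points you leave as expectations hold exactly, so no fallback is needed: with $H=\tfrac{\sqrt{pq}(1-q)}{2}+q\tau$ one has $s+\xi_1P=H/(1-p)$, $s+\tau Q=H/(1-q)$ and $\eta=H/\sqrt{pq}$, hence $f(P,Q)=0$ for every $\tau$; and $p(1-PQ)-(p-q)P^2(1+Q)=p(1-P^2)>0$.
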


\begin{proof}
Since
$
 \frac{(p-q)q}{2\sqrt{pq}}
 =
 \left(1-\frac qp\right)s,
 \frac q{\sqrt{pq}}=\sqrt{\frac qp}
$
and
$
 \frac q2=\sqrt{\frac qp}\,s,
$
the definitions of $\xi_1,\xi_2$, and $\eta$ give
$$
 s-\xi_1=\frac qp(s-\tau),
 \qquad
 \eta-\frac12=-\sqrt{\frac qp}(s-\tau),
 \qquad
 s-\xi_2=s-\tau.
$$
Consequently, if $0<\tau\leq s$, then
$$
 M_{\emptyset}
 =
 (s-\tau)
 \begin{pmatrix}
  q/p&-\sqrt{q/p}\\
  -\sqrt{q/p}&1
 \end{pmatrix}
 \succeq0.
$$

For $T\neq\emptyset$, set
$$
 \rho_i(T)
 =
 \prod_{\ell\in T}
 \frac{r_i^{(\ell)}}{1-r_i^{(\ell)}}
 \qquad i=1,2,
$$
and write
$$
 R_1=\frac p{1-p},
 \qquad
 R_2=\frac q{1-q}.
$$
Since the function $t\mapsto t/(1-t)$ is increasing on $(0,1)$,
the inequalities
$
 p_\ell\leq p<\frac12,
 q_\ell\leq q<\frac12
$
give
$$
 0<
 \frac{r_i^{(\ell)}}{1-r_i^{(\ell)}}
 \leq R_i<1
 \qquad(i=1,2).
$$
Since $T\neq\emptyset$, multiplying these inequalities over
$\ell\in T$ yields
$$
 0<\rho_i(T)\leq R_i^{|T|}<1.
$$
Moreover,
$$
 \lambda_{ii}(T)=(-1)^{|T|}\rho_i(T),
 \qquad
 \lambda_{12}(T)^2=\rho_1(T)\rho_2(T).
$$

First set $\tau=0$.  If $|T|$ is odd, a direct expansion of the
determinant of the displayed matrix $M_T$ gives
$$
 4\det M_T
 =
 pq+(p-q)q\,\rho_1(T)
 -(1-q)^2\rho_1(T)\rho_2(T).
$$
If $|T|$ is even, the same calculation gives
$$
 4\det M_T
 =
 pq-(p-q)q\,\rho_1(T)
 -(1-q)^2\rho_1(T)\rho_2(T).
$$
Suppose that $|T|\geq2$ is even.  Using
$0<\rho_i(T)\leq R_i^{|T|}<1$, we obtain
$$
\begin{aligned}
 (p-q)q\,\rho_1(T)
 +(1-q)^2\rho_1(T)\rho_2(T)&\leq
 R_1^2\bigl((p-q)q+(1-q)^2R_2^2\bigr)\\
 &=
 R_1^2\bigl((p-q)q+q^2\bigr)
 =pqR_1^2.
\end{aligned}
$$
Thus, if $|T|\geq2$ is even,  then
$$
 \det M_T
 \geq
 \delta_{\mathrm e}
 :=
 \frac{pq(1-R_1^2)}4>0.
$$
Now suppose that $|T|\geq3$ is odd.  Discarding the nonnegative
middle term in the odd-case determinant formula and again using
$\rho_i(T)\leq R_i^{|T|}$, we obtain
$$
 4\det M_T
 \geq
 pq-(1-q)^2R_1^3R_2^3.
$$
The last quantity is positive because
$$
\begin{aligned}
 \frac{(1-q)^2R_1^3R_2^3}{pq}
=
 \frac{p^2q^2}{(1-p)^3(1-q)}\leq
 \frac{p^4}{(1-p)^4}
 <1,
\end{aligned}
$$
where the first inequality uses $q\leq p$.  Hence, if $|T|\geq3$ is odd,  then
$$
 \det M_T
 \geq
 \delta_{\mathrm o}
 :=
 \frac{pq-(1-q)^2R_1^3R_2^3}{4}>0.
$$

At $\tau=0$, the lower-right entry of each nonempty block $M_T$ is
$s>0$.  The determinant bounds above therefore show that every
block with $|T|\geq2$ is positive definite.  At $\tau=0$, we have
$
 \xi_2=0
$
and
$
 \xi_1=\frac{(p-q)q}{2\sqrt{pq}}.
$
Hence, for $|T|\geq2$,
$$
\begin{aligned}
 \operatorname{tr}M_T
 =
 2s-\xi_1\lambda_{11}(T)\leq
 2s+\xi_1|\lambda_{11}(T)|\leq
 2s+\frac{(p-q)q}{2\sqrt{pq}}R_1^2
 =:L_0,
\end{aligned}
$$
where we used
$|\lambda_{11}(T)|=\rho_1(T)\leq R_1^{|T|}\leq R_1^2$.
For a positive definite $2\times2$ matrix, its smaller eigenvalue is
at least its determinant divided by its trace.  Thus, at $\tau=0$,
all blocks with $|T|\geq2$ have smaller eigenvalue at least
$$
 \gamma_0
 :=
 \frac{\min\{\delta_{\mathrm e},\delta_{\mathrm o}\}}{L_0}>0.
$$

Assume that $|T|\geq2$. From the definitions of
$\xi_1,\xi_2$, and $\eta$, we have
$$
 M_T(\tau)-M_T(0)
 =
 \tau
 \begin{pmatrix}
  -\dfrac qp\lambda_{11}(T)&
  \sqrt{\dfrac qp}\lambda_{12}(T)\\[5pt]
  \sqrt{\dfrac qp}\lambda_{12}(T)&
  -\lambda_{22}(T)
 \end{pmatrix}.
$$
Since $q\leq p$ and
$
 |\lambda_{11}(T)|,\ 
 |\lambda_{12}(T)|,\ 
 |\lambda_{22}(T)|
 \leq1,
$
every entry of $M_T(\tau)-M_T(0)$ has absolute value at most $\tau$.
Consequently, for every $(x,y)\in\mathbb R^2$,
$$
\begin{aligned}
\left|
 \begin{pmatrix}x&y\end{pmatrix}
 \bigl(M_T(\tau)-M_T(0)\bigr)
 \begin{pmatrix}x\\y\end{pmatrix}
\right|
\leq
\tau\bigl(x^2+2|xy|+y^2\bigr)\leq
2\tau(x^2+y^2).
\end{aligned}
$$
Choose
$
 0<\tau\leq\min\left\{s,\frac{\gamma_0}{4}\right\}.
$
Since $M_T(0)\succeq\gamma_0I_2$ for $|T|\geq2$, it follows that
$$
\begin{aligned}
 \begin{pmatrix}x&y\end{pmatrix}
 M_T(\tau)
 \begin{pmatrix}x\\y\end{pmatrix}
 \geq
 \gamma_0(x^2+y^2)-2\tau(x^2+y^2)\geq
 \frac{\gamma_0}{2}(x^2+y^2).
\end{aligned}
$$
Hence,
$$
 M_T(\tau)\succeq\frac{\gamma_0}{2}I_2.
$$

It remains to treat the singleton blocks.  Assume that $|T|=1$, and put
$
 u=\rho_1(T)\in[0,R_1],
$
$
 v=\rho_2(T)\in[0,R_2].
$
The determinant of $M_T$ is
$$
 D(u,v)
 =
 (s+\xi_1u)(s+\tau v)-\eta^2uv.
$$
This is affine in each of $u$ and $v$.  Define
$$
 H=\frac{\sqrt{pq}(1-q)}2+q\tau>0.
$$
Direct substitution, using
$
 s+\xi_1R_1=\frac{H}{1-p},
 s+\tau R_2=\frac{H}{1-q},
 \eta=\frac{H}{\sqrt{pq}},
$
gives
$$
\begin{aligned}
 D(0,0)=s^2,\qquad
 D(R_1,0)=\frac{sH}{1-p},\qquad
 D(0,R_2)=\frac{sH}{1-q},\qquad
 D(R_1,R_2)=0.
\end{aligned}
$$
The function $D(u,v)$ is affine in each variable separately.
For fixed $v$, its minimum over $0\leq u\leq R_1$ is attained at
$u=0$ or $u=R_1$. For either choice of $u$, its minimum over
$0\leq v\leq R_2$ is attained at $v=0$ or $v=R_2$. Hence the minimum
of $D$ on $[0,R_1]\times[0,R_2]$ is attained at one of the four
corners. Since all four corner values are nonnegative,
$$
 D(u,v)\geq0
 \qquad
 \text{for all }(u,v)\in[0,R_1]\times[0,R_2].
$$
Since $p\geq q$ and $\tau>0$, we have
$
 \xi_1
 =
 \frac qp\tau+\frac{(p-q)q}{2\sqrt{pq}}
 >0.
$
Hence, the two diagonal entries of $M_T$ satisfy
$$
 s+\xi_1u\geq s>0,
 \qquad
 s+\tau v\geq s>0.
$$
This, together with $\det M_T=D(u,v)\geq0$,  implies that 
$M_T$ is positive semidefinite.
 The block
$M_{\emptyset}$ is positive semidefinite because $\tau\leq s$, and
the preceding estimate gives
$$
 M_T(\tau)\succeq\frac{\gamma_0}{2}I_2,\qquad
 |T|\geq2.
$$
Therefore, every block is positive semidefinite. Then $\mathsf S$ is
positive semidefinite. The asserted uniform bound holds with
$\gamma=\gamma_0/2$, completing the proof.
\end{proof}

Recall that $\Delta_i$ is diagonal and
$
 (\Delta_i)_{S,S}
 =
 \mu_i(\{S\}).
$
Identify a function $f:2^{[n]}\to\mathbb R$ with the column vector
$(f(S))_{S\subseteq[n]}$, and equip this space with the
\textit{$\mu_i$-weighted inner product}
$$
 \langle f,g\rangle_{\mu_i}
 =
 f^{\mathsf T}\Delta_i g
 =
 \sum_{S\subseteq[n]}\mu_i(S)f(S)g(S).
$$
Recall that
$
 V_i=\bigotimes_{\ell=1}^nV_i^{(\ell)}.
$
For each $T\subseteq[n]$, let $v_i(T)$ denote the column of $V_i$
indexed by $T$. By Lemma~\ref{lem:semidefinite-identities}, we have
$
 V_i^{\mathsf T}\Delta_iV_i=I,
$
which shows that
$$
 \langle v_i(T),v_i(U)\rangle_{\mu_i}
 =
 \begin{cases}
  1,&T=U,\\
  0,&T\neq U.
 \end{cases}
$$
Thus, the columns of $V_i$ form an orthonormal basis with respect to
$\langle\cdot,\cdot\rangle_{\mu_i}$. Consequently, every function
$f:2^{[n]}\to\mathbb R$ has the unique \textit{Fourier expansion}
$$
 f
 =
 \sum_{T\subseteq[n]}\widehat f_i(T)v_i(T),
 \qquad
 \widehat f_i(T)
 =
 \langle f,v_i(T)\rangle_{\mu_i}
 =
 v_i(T)^{\mathsf T}\Delta_i f.
$$
The \textit{degree} of the basis function $v_i(T)$ is defined to be $|T|$.
We define the \textit{Fourier weight of $f$ above degree one} by
$$
 W_{>1}^{\mu_i}(f)
 =
 \sum_{T\subseteq[n], |T|\geq2}
 \widehat f_i(T)^2.
$$

\begin{proposition}\label{prop:higher-degree-estimate}
There is a constant $C_{\mathrm{sd}}=C_{\mathrm{sd}}(p,q)>0$ such
that, whenever $0\leq\varepsilon<1$ and
$\mathcal A,\mathcal B\subseteq 2^{[n]}$ are cross-intersecting and satisfy
$$
 \mu_1(\mathcal A)\mu_2(\mathcal B)
 \geq(1-\varepsilon)pq,
$$
one has
\begin{equation}
 W_{>1}^{\mu_1}(\mathbf 1_{\mathcal A})
 +
 W_{>1}^{\mu_2}(\mathbf 1_{\mathcal B})
 \leq C_{\mathrm{sd}}\varepsilon.
 \label{eq:stability-high-degree}
\end{equation}
\end{proposition}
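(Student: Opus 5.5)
The plan is to test the positive semidefinite matrix $S$ of Lemma~\ref{lem:semidefinite-gap} against a rescaled pair of indicator vectors. For this step we need an upper bound coming from cross-intersection and near-extremality, and a lower bound coming from the spectral gap $\gamma$ on the blocks with $|T|\geq 2$.

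\emph{Setup.} Write $f=\mathbf 1_{\mathcal A}$, $g=\mathbf 1_{\mathcal B}$, $\alpha=\mu_1(\mathcal A)$ and $\beta=\mu_2(\mathcal B)$.

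First dispose of large $\varepsilon$. Since $W_{>1}^{\mu_1}(f)\leq\langle f,f\rangle_{\mu_1}=\alpha\leq1$, and similarly for $g$, the left side of \eqref{eq:stability-high-degree} is at most $2$. So for $\varepsilon\geq\frac12$ any $C_{\mathrm{sd}}\geq4$ works. Assume henceforth $\varepsilon<\frac12$. Then $\alpha\geq\alpha\beta\geq pq/2$ and likewise $\beta\geq pq/2$.

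Put $t=(\beta/\alpha)^{1/4}$, so that $t^{\pm1}$ are bounded above and below by constants depending only on $p,q$. Take $w=(tf,\,t^{-1}g)$.

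\emph{Upper bound.} Expand $w^{\mathsf T}Sw$ term by term.

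The diagonal terms give $s(t^2\alpha+t^{-2}\beta)=2s\sqrt{\alpha\beta}=\sqrt{pq}\sqrt{\alpha\beta}$.

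The $Z$-terms $-\xi_1t^2f^{\mathsf T}\Delta_1A_{11}f-\xi_2t^{-2}g^{\mathsf T}\Delta_2A_{22}g$ are nonpositive, because $Z$ has nonnegative entries and $f,g\geq0$.

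The off-diagonal $\eta$-terms give $2\eta\, f^{\mathsf T}\Delta_1A_{12}g$ (the factors $t,t^{-1}$ cancel). By the explicit formula for $(\Delta_1A_{12})_{S,T}$, this vanishes whenever $S\cap T\neq\emptyset$. Since $\mathcal A,\mathcal B$ are cross-intersecting, the term is therefore $0$.

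The $\mathsf J$-terms give $-f^{\mathsf T}\Delta_1\mathsf J\Delta_2 g=-\alpha\beta$.

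Hence
$$
w^{\mathsf T}Sw\leq\sqrt{\alpha\beta}\bigl(\sqrt{pq}-\sqrt{\alpha\beta}\bigr).
$$
Theorem~\ref{thm:main-product} gives $\alpha\beta\leq pq$, and the hypothesis gives $\alpha\beta\geq(1-\varepsilon)pq$. Using $1-\sqrt{1-\varepsilon}\leq\varepsilon$, we get $w^{\mathsf T}Sw\leq pq\,\varepsilon$.

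\emph{Lower bound.} Write $w=\operatorname{diag}(V_1,V_2)\hat w$, where $\hat w$ collects the coefficients $(t\widehat f_1(T),\,t^{-1}\widehat g_2(T))$. Then, by the congruence \eqref{ff1} and the block decomposition,
$$
w^{\mathsf T}Sw=\sum_{T}\bigl(t\widehat f_1(T),t^{-1}\widehat g_2(T)\bigr)M_T\bigl(t\widehat f_1(T),t^{-1}\widehat g_2(T)\bigr)^{\mathsf T}.
$$
By Lemma~\ref{lem:semidefinite-gap}, every block is positive semidefinite, so we may drop the blocks with $|T|\leq1$. Each block with $|T|\geq2$ satisfies $M_T\succeq\gamma I_2$. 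Therefore
$$
w^{\mathsf T}Sw\geq\gamma\min\{t^2,t^{-2}\}\bigl(W_{>1}^{\mu_1}(f)+W_{>1}^{\mu_2}(g)\bigr).
$$

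\emph{Conclusion and main obstacle.} Combining the two bounds yields \eqref{eq:stability-high-degree} with
$$
C_{\mathrm{sd}}=\max\Bigl\{4,\ \frac{pq}{\gamma\,\min\{t^2,t^{-2}\}}\Bigr\},
$$
where $\min\{t^2,t^{-2}\}\geq\sqrt{pq/2}$ by the bounds on $\alpha,\beta$.

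The delicate point is the bookkeeping in the quadratic form. One must check that the cross term is exactly $f^{\mathsf T}\Delta_1A_{12}g$ and that it is killed by disjointness. One must also check that the $\mathsf J$-contribution is exactly $-\alpha\beta$; this uses the symmetry $(\Delta_iA_{ij})^{\mathsf T}=\Delta_jA_{ji}$ from Lemma~\ref{lem:semidefinite-identities}. Finally, the rescaling by $t$ is needed so that the diagonal contribution becomes $\sqrt{pq}\sqrt{\alpha\beta}$ rather than $s(\alpha+\beta)$, which need not be close to $\alpha\beta$.
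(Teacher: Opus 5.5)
Your proof is correct and is the paper's argument. Your test vector $(tf,t^{-1}g)$ is exactly $(\alpha\beta)^{1/4}$ times the paper's normalized vector $(f/\sqrt{\alpha},\,g/\sqrt{\beta})$, and the upper bound (via $Z\geq0$, disjointness and the $\mathsf J$-term) and the block-diagonal lower bound with gap $\gamma$ coincide with the paper's. The only difference is that the paper works with the unit-normalized vectors and uses just $\alpha,\beta\leq1$ at the end. That avoids your $\min\{t^2,t^{-2}\}$ loss and the separate case $\varepsilon\geq\tfrac12$. Also, your displayed $C_{\mathrm{sd}}$ should use the uniform bound $\sqrt{pq/2}$ in place of $\min\{t^2,t^{-2}\}$, so that it does not depend on the families.
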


\begin{proof}
Let $x_1,x_2$ be the characteristic column vectors of
$\mathcal A,\mathcal B$, respectively. Set
$$
 a_1=\mu_1(\mathcal A)\leq 1,
 \qquad
 a_2=\mu_2(\mathcal B)\leq 1.
$$
Since
$\varepsilon<1$, the assumed lower bound gives
$a_1a_2>0$.  Hence the vectors
$$
 u_1=\frac{x_1}{\sqrt{a_1}},
 \qquad
 u_2=\frac{x_2}{\sqrt{a_2}},
 \qquad
 u=\binom{u_1}{u_2}
$$
are well defined.  Since  $x_i$ is a $0$-$1$ vector, we have
$$
 x_i^{\mathsf T}\Delta_i x_i
 =
 \sum_{S\subseteq[n]}\mu_i(S)x_i(S)^2
 =
 \sum_{S\subseteq[n]}\mu_i(S)x_i(S)
 =
 a_i.
$$
Consequently,
$$
 u_i^{\mathsf T}\Delta_i u_i
 =
 \frac{x_i^{\mathsf T}\Delta_i x_i}{a_i}
 =1,
 \qquad i=1,2.
$$
Recall that the matrix $\Delta_1A_{12}$ is supported on pairs
$(A,B)$ with $A\cap B=\emptyset$.  Then cross-intersection gives
$$
 x_1^{\mathsf T}\Delta_1A_{12}x_2
 =
 \sum_{S,T\subseteq[n]}
 x_1(S)(\Delta_1A_{12})_{S,T}x_2(T)=0.
$$
Thus, $u_1^{\mathsf T}\Delta_1A_{12}u_2=0$.
Moreover, writing $ J=\mathbf 1\mathbf 1^{\mathsf T}$, we have
$$
\begin{aligned}
 x_1^{\mathsf T}\Delta_1 J\Delta_2x_2
 =
 \bigl(x_1^{\mathsf T}\Delta_1\mathbf 1\bigr)
 \bigl(\mathbf 1^{\mathsf T}\Delta_2x_2\bigr)=
 \mu_1(\mathcal A)\mu_2(\mathcal B)
 =
 a_1a_2.
\end{aligned}
$$
Consequently,
$
 u_1^{\mathsf T}\Delta_1 J\Delta_2u_2
 =\sqrt{a_1a_2}.
$

Substituting the definitions of $S$ and $Z$, and using
the symmetry of $S$, we obtain
$$
\begin{aligned}
 u^{\mathsf T} S u+u^{\mathsf T}Z u
 =&
 u_1^{\mathsf T}
 \bigl(s\Delta_1-\xi_1\Delta_1A_{11}\bigr)u_1+
 u_2^{\mathsf T}
 \bigl(s\Delta_2-\xi_2\Delta_2A_{22}\bigr)u_2\\
 &+
 2u_1^{\mathsf T}
 \left(
  \eta\Delta_1A_{12}
  -\frac12\Delta_1 J\Delta_2
 \right)u_2+
 \xi_1u_1^{\mathsf T}\Delta_1A_{11}u_1
 +
 \xi_2u_2^{\mathsf T}\Delta_2A_{22}u_2\\
 =&
 s\left(
  u_1^{\mathsf T}\Delta_1u_1
  +u_2^{\mathsf T}\Delta_2u_2
 \right)
 +
 2\eta\,u_1^{\mathsf T}\Delta_1A_{12}u_2-
 u_1^{\mathsf T}\Delta_1 J\Delta_2u_2\\
 =&
 2s-\sqrt{a_1a_2}=
 \sqrt{pq}-\sqrt{a_1a_2}.
\end{aligned}
$$
Although $S$ and $ Z$ depend on $\tau$, the identity
above holds for every $\tau>0$. We now fix $\tau>0$, depending only on $p$ and
$q$, sufficiently small that Lemma~\ref{lem:semidefinite-gap}
applies. Then $ S$ is positive semidefinite, and hence
$u^{\mathsf T} S u\geq0$.  
Since $u$ and
$ Z$ are entrywise nonnegative,
$u^{\mathsf T} Z u\geq0$ as well.  It follows from
the assumed lower bound and the preceding identity that
$$
\begin{aligned}
 u^{\mathsf T} S u
 \leq
 \sqrt{pq}-\sqrt{a_1a_2}\leq
 \sqrt{pq}\bigl(1-\sqrt{1-\varepsilon}\bigr)
 \leq
 \sqrt{pq}\,\varepsilon.
\end{aligned}
$$

Write
$
 u_i=\sum_{T\subseteq[n]}\theta_i(T)v_i(T).
$
Let
$
 \boldsymbol\theta_i
 =
 \bigl(\theta_i(T)\bigr)_{T\subseteq[n]}.
$
Since the vectors $v_i(T)$ are the columns of $V_i$, this is
equivalent to
$
 u_i=V_i\boldsymbol\theta_i.
$
Thus, with
$$
 W=
 \begin{pmatrix}V_1&0\\0&V_2\end{pmatrix},
 \qquad
 \boldsymbol\theta=
 \binom{\boldsymbol\theta_1}{\boldsymbol\theta_2},
$$
we have $u=W\boldsymbol\theta$, and hence
$$
 u^{\mathsf T} S u
 =
 \boldsymbol\theta^{\mathsf T}
 W^{\mathsf T}\mathsf S W
 \boldsymbol\theta.
$$
As established \eqref{ff1}, after reordering the basis so that $(e_T,0)$ and $(0,e_T)$ are
adjacent for every $T\subseteq[n]$, there exists a permutation matrix
$P$ such that
$$
 P^{\mathsf T}W^{\mathsf T} S WP
 =
 \bigoplus_{T\subseteq[n]}M_T.
$$
Set
$
 \widetilde{\boldsymbol\theta}
 =
 P^{\mathsf T}\boldsymbol\theta.
$
By the definition of $P$, we have
$
 \widetilde{\boldsymbol\theta}
 =
 (z_T)_{T\subseteq[n]},
$
where
$
 z_T
 =
 \binom{\theta_1(T)}{\theta_2(T)}.
$
Since $P$ is a permutation matrix, $PP^{\mathsf T}=I$ and hence
$\boldsymbol\theta=P\widetilde{\boldsymbol\theta}$. Recalling that
$u=W\boldsymbol\theta$, we obtain
$$
\begin{aligned}
 u^{\mathsf T} S u
 &=
 \boldsymbol\theta^{\mathsf T}
 W^{\mathsf T} S W
 \boldsymbol\theta=
 \widetilde{\boldsymbol\theta}^{\mathsf T}
 P^{\mathsf T}W^{\mathsf T} S WP
 \widetilde{\boldsymbol\theta}=
 \widetilde{\boldsymbol\theta}^{\mathsf T}
 \left(
  \bigoplus_{T\subseteq[n]}M_T
 \right)
 \widetilde{\boldsymbol\theta}=
 \sum_{T\subseteq[n]}z_T^{\mathsf T}M_Tz_T.
\end{aligned}
$$
Every $M_T$ is positive semidefinite. Moreover,
Lemma~\ref{lem:semidefinite-gap} gives
$
 M_T\succeq\gamma I_2
$
whenever $|T|\geq2$. Therefore,
$$
 z_T^{\mathsf T}M_Tz_T
 \geq
 \gamma z_T^{\mathsf T}z_T
 =
 \gamma\left(\theta_1(T)^2+\theta_2(T)^2\right)
$$
for every $T$ with $|T|\geq2$, whereas
$
z_T^{\mathsf T}M_Tz_T\geq0
$
for $|T|\leq1$. Summing these inequalities gives
$$
 u^{\mathsf T} S u
 \geq
 \gamma
 \sum_{\substack{T\subseteq[n]\\|T|\geq2}}
 \left(\theta_1(T)^2+\theta_2(T)^2\right).
$$
Combining this with
$
u^{\mathsf T} S u\leq\sqrt{pq}\,\varepsilon
$
yields
$$
 \sum_{\substack{T\subseteq[n]\\|T|\geq2}}
 \left(\theta_1(T)^2+\theta_2(T)^2\right)
 \leq
 \frac{\sqrt{pq}}{\gamma}\,\varepsilon.
$$

Recall that $\langle v_i(T),v_i(U)\rangle_{\mu_i} =1$ if and only if $U=T$.
Since $x_i=\sqrt{a_i}\,u_i$ and  $
 u_i=\sum_{T\subseteq[n]}\theta_i(T)v_i(T)$,
we have
$$
\begin{aligned}
 \widehat{x_i}_i(T)
 =
 v_i(T)^{\mathsf T}\Delta_i x_i=
 \sqrt{a_i}\,
 v_i(T)^{\mathsf T}\Delta_i u_i=
 \sqrt{a_i}\,\theta_i(T).
\end{aligned}
$$
Consequently,
$$
\begin{aligned}
 W_{>1}^{\mu_1}(\mathbf 1_{\mathcal A})
 +W_{>1}^{\mu_2}(\mathbf 1_{\mathcal B})
 &=
 \sum_{T\subseteq[n], |T|\geq2}
 \left(
  a_1\theta_1(T)^2+a_2\theta_2(T)^2
 \right)\\
 &\leq
 \sum_{T\subseteq[n], |T|\geq2}
 \left(
  \theta_1(T)^2+\theta_2(T)^2
 \right)\leq
 \frac{\sqrt{pq}}{\gamma}\,\varepsilon,
\end{aligned}
$$
where we used $0<a_1,a_2\leq1$. Thus, the proposition holds with
$
 C_{\mathrm{sd}}=\frac{\sqrt{pq}}{\gamma}.
$
\end{proof}

\subsection{One-coordinate approximation}

We first prove an elementary estimate for a sum of independent random
variables.  The argument is a self-contained version of the two-point
concentration method of Jendrej, Oleszkiewicz and Wojtaszczyk
\cite{JendrejOleszkiewiczWojtaszczyk}.

\begin{lemma}\label{lem:one-large-summand}
Let $Y_1,\ldots,Y_n$ be independent  real-valued random
variables  such
that
$
 \mathbb E[Y_i^2]<\infty$ for every $i\in[n]$.
Put $L=\sum_{i=1}^nY_i$.  Suppose that
$$
 \mathbb E[(|L|-1)^2]\leq \eta\leq 1
 \qquad\text{and}\qquad
 \operatorname{Var}(L)>0.
$$
Then there is an index $k\in[n]$ such that
$$
 \operatorname{Var}\Bigl(\sum_{i\neq k}Y_i\Bigr)
 \leq \frac{1600\eta}{\operatorname{Var}(L)}.
$$
\end{lemma}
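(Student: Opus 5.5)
The plan is to deduce the lemma from a \emph{two-block} estimate and then choose $k$ as a "median" index of the variance profile. Write $\sigma_i^2=\operatorname{Var}(Y_i)$, so that $\operatorname{Var}(L)=\sum_i\sigma_i^2$ by independence.

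\textbf{Two-block estimate.} For independent square-integrable $U,V$ with $\mathbb E[(|U+V|-1)^2]\le\eta\le1$, we aim to show
\[
\operatorname{Var}(U)\operatorname{Var}(V)\le C\eta
\]
with an absolute constant $C$; something like $C=400$ is what yields the $1600$ at the end.

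\begin{enumerate}[label=\textup{(\arabic*)},leftmargin=2.2em]
\item Take independent copies $U',V'$ and form the four sums
\[
x_1=U+V,\quad x_2=U+V',\quad x_3=U'+V,\quad x_4=U'+V'.
\]
Each $x_j$ is distributed as $L=U+V$, and $x_1+x_4=x_2+x_3$.
\item Write $x_j=s_j+e_j$, where $s_j\in\{\pm1\}$ is the nearest sign and $|e_j|=\bigl||x_j|-1\bigr|$. Then $\mathbb E e_j^2\le\eta$ for each $j$.
\item We have $V-V'=x_1-x_2$ and $U-U'=x_1-x_3$. Also $s_1+s_4-s_2-s_3=-(e_1+e_4-e_2-e_3)$ is an even integer, so it vanishes on the good event $G=\{\max_j|e_j|<1/2\}$.
\item On $G$, the identity $s_1+s_4=s_2+s_3$ with $\pm1$ entries forces $s_1=s_2$ or $s_1=s_3$. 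Hence $(s_1-s_3)(s_1-s_2)=0$, and therefore
\[
(U-U')(V-V')=(s_1-s_3)(e_1-e_2)+(s_1-s_2)(e_1-e_3)+(e_1-e_3)(e_1-e_2).
\]
\item Since $\mathbb E[(U-U')^2(V-V')^2]=4\operatorname{Var}(U)\operatorname{Var}(V)$, it suffices to bound the second moment of the right-hand side by $O(\eta)$. On the complement of $G$, some $|e_j|\ge1/2$, which by Markov has probability $O(\eta)$.
\end{enumerate}

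\textbf{Main obstacle.} The hard part is the last step: the quartic term $(e_1-e_3)^2(e_1-e_2)^2$ on $G$, and the full product off $G$, are not controlled by second moments alone.

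\begin{enumerate}[label=\textup{(\alph*)},leftmargin=2.2em]
\item The first thing to try is to use boundedness where it is available. On $G$ all $|e_j|<1/2$, so the quartic term is at most a constant times $e_1^2+e_2^2+e_3^2$, which is fine.
\item Off $G$, apply the same sign analysis at scale $1/2$. Alternatively, work with the truncated quantity $\min\{1,|U-U'|\}\cdot\min\{1,|V-V'|\}$. Then first bound $\operatorname{Var}(U)\operatorname{Var}(V)$ via a truncated version, using that both $x_1-x_3$ and $x_2-x_4$ equal $U-U'$, so that large values of $|U-U'|$ force large $e_j$.
\item If this becomes unwieldy, I would use the symmetric alternative $2(U-U')(V-V')=x_1^2+x_4^2-x_2^2-x_3^2$ together with $|x^2-1|\le e^2+2|e|$, truncated at level $1$.
\end{enumerate}

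In all variants, the point that makes the argument work is that near-$\{\pm1\}$ values of all four sums leave no room for both increments to be nonzero simultaneously.

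\textbf{Choosing $k$.} Let $k$ be the least index with $\sum_{i\le k}\sigma_i^2\ge\operatorname{Var}(L)/2$.
\begin{itemize}
\item Take $U=\sum_{i<k}Y_i$ and $V=\sum_{i\ge k}Y_i$. Then $\operatorname{Var}(V)>\operatorname{Var}(L)/2$, so the two-block estimate gives $\operatorname{Var}(U)\le 2C\eta/\operatorname{Var}(L)$.
\item Take $U=\sum_{i\le k}Y_i$ and $V=\sum_{i>k}Y_i$. Then $\operatorname{Var}(U)\ge\operatorname{Var}(L)/2$, so similarly $\operatorname{Var}(V)\le 2C\eta/\operatorname{Var}(L)$.
\end{itemize}
Adding the two bounds, independence gives
\[
\operatorname{Var}\Bigl(\sum_{i\ne k}Y_i\Bigr)=\operatorname{Var}\Bigl(\sum_{i<k}Y_i\Bigr)+\operatorname{Var}\Bigl(\sum_{i>k}Y_i\Bigr)\le\frac{4C\eta}{\operatorname{Var}(L)}.
\]
This is the claim once $4C\le1600$.
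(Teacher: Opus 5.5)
Your median choice of $k$ is sound. Given a product-form bound $\operatorname{Var}(U)\operatorname{Var}(V)\le C\eta$, it yields $4C\eta/\operatorname{Var}(L)$, which is a valid alternative to the paper's inclusion-minimal set $I$. The gap is that the two-block estimate, which carries the whole content of the lemma, is never proved, and step (5) cannot prove it as set up. You need $\mathbb E[(U-U')^2(V-V')^2]=O(\eta)$. Off $G$ you only know $\mathbb P(G^c)\le 16\eta$, but the integrand there is unbounded. Expressing it through $e_1,e_2,e_3$ produces quartic terms such as $e_1^2e_3^2$, which the hypothesis $\mathbb E e_j^2\le\eta$ does not control; nothing factors, because $e_1$ and $e_3$ both depend on $V$. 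Variants (a)--(c) remain sketches. Variant (a) only treats $G$. The truncations in (b) and (c) can bound $\mathbb E\min\{1,|(U-U')(V-V')|\}^2$ by $O(\eta)$, but they give no way back to $\operatorname{Var}(U)\operatorname{Var}(V)$: heavy tails of $U-U'$ are exactly what truncation discards.

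The missing idea, which is the core of the paper's proof, is to control each increment separately in $L^2$ by its distance to $\{-2,0,2\}$, and to use independence only to multiply the two bounds. Since $U-U'=x_1-x_3=(s_1-s_3)+(e_1-e_3)$ with $s_1-s_3\in\{-2,0,2\}$, you get $\operatorname{dist}(U-U',\{-2,0,2\})\le|e_1|+|e_3|$, whose $L^2$ norm is at most $2\sqrt\eta$. This gives $\mathbb E(U-U')^2\le 8\alpha+8\eta$ with $\alpha=\mathbb P(|U-U'|\ge1)$, and likewise $\mathbb E(V-V')^2\le 8\beta+8\eta$. Your step (4) then supplies the coupling: on $G$, $s_1=s_3$ forces $|U-U'|<1$ and $s_1=s_2$ forces $|V-V'|<1$. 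By independence, $\alpha\beta=\mathbb P(|U-U'|\ge1,\ |V-V'|\ge1)\le\mathbb P(G^c)\le16\eta$. Hence $4\operatorname{Var}(U)\operatorname{Var}(V)=\mathbb E(U-U')^2\,\mathbb E(V-V')^2\le1216\eta$, so $C=304$, and your median argument gives the lemma with $1216$ in place of $1600$. The paper proceeds the same way, using the nearest-point map $\phi$ onto $\{-2,0,2\}$ and a bound $\alpha\beta\le 9\rho^2$ obtained from a third distance estimate for $(X-X')+(Y-Y')$. It states the two-block bound in the form $\min\{\operatorname{Var}X,\operatorname{Var}Y\}\le 800\rho^2/\sigma^2$.
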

\begin{proof}\setcounter{claim}{0}
We begin with the following two-summand estimate.  Let $X$ and $Y$ be
independent, set $S=X+Y$, and suppose that
$$
 \mathbb E[(|S|-1)^2]\leq \rho^2,\qquad
 0\leq\rho\leq1,\qquad
 \sigma^2=\operatorname{Var}(S)>0.
$$
\begin{claim}\label{cl1}
$
\min\{\operatorname{Var}(X),\operatorname{Var}(Y)\}
 \leq \frac{800\rho^2}{\sigma^2}.
$
\end{claim}
\begin{proof}[Proof of Claim]
Let $(X',Y')$ be an independent copy of $(X,Y)$. Put
$
E=\{-1,1\}
$
and
$
D=E-E=\{-2,0,2\}.
$
Define $\phi:\mathbb R\to D$ by
$$
\phi(t)=
\begin{cases}
 2,  & t>1,\\
 0,  & -1\leq t\leq1,\\
 -2, & t<-1.
\end{cases}
$$
Thus, $\phi(t)$ is a point of $D$ nearest to $t$, and hence
$$
|t-\phi(t)|=\operatorname{dist}(t,D)
:=\min_{d\in D}|t-d|.
$$
Notice also that, for every $z\in\mathbb R$,
$
\operatorname{dist}(z,E)
=\min\{|z-1|,|z+1|\}
=\bigl||z|-1\bigr|.
$
For a square-integrable random variable $Z$, i.e., $
 \mathbb E[Z^2]<\infty$, we use the notation
$$
\|Z\|_2:=\left(\mathbb E[|Z|^2]\right)^{1/2}
$$
for its $L^2$-norm. The assumption therefore gives
$$
\begin{aligned}
\bigl\|\operatorname{dist}(X+Y,E)\bigr\|_2
&=\left(
\mathbb E\left[
\operatorname{dist}(X+Y,E)^2
\right]\right)^{1/2}\\
&=\left(
\mathbb E\bigl[(|X+Y|-1)^2\bigr]
\right)^{1/2}\leq\rho.
\end{aligned}
$$
Moreover, each of
$
X'+Y, X+Y', X'+Y'
$
has the same distribution as $X+Y$, so the same $L^2$ bound holds
with $X+Y$ replaced by any of these three random variables.

Since $D=E-E$, for any $u,v\in\mathbb R$, we have
$$
\begin{aligned}
\operatorname{dist}(u-v,D)
&=\min_{s,s'\in E}|(u-s)-(v-s')|\leq
\min_{s\in E}|u-s|+\min_{s'\in E}|v-s'|\\
&=\operatorname{dist}(u,E)+\operatorname{dist}(v,E).
\end{aligned}
$$
Applying the pointwise inequality with
$
u=X+Y
$
and
$
v=X'+Y,
$
we obtain
$$
\operatorname{dist}(X-X',D)\leq U+V,
$$
where
$
U=\operatorname{dist}(X+Y,E)$ and
$V=\operatorname{dist}(X'+Y,E).$
Since all the random variables in the preceding inequality are
nonnegative,
$$
\bigl\|\operatorname{dist}(X-X',D)\bigr\|_2
\leq\|U+V\|_2.
$$
By the Cauchy--Schwarz inequality,
$$
\mathbb E[UV]\leq
\bigl(\mathbb E[U^2]\bigr)^{1/2}
\bigl(\mathbb E[V^2]\bigr)^{1/2}
=\|U\|_2\|V\|_2.
$$
Consequently,
$$
\begin{aligned}
\|U+V\|_2
&=\left(\mathbb E[(U+V)^2]\right)^{1/2}=\left(
\|U\|_2^2+2\mathbb E[UV]+\|V\|_2^2
\right)^{1/2}\\
&\leq
\left(
\|U\|_2^2+2\|U\|_2\|V\|_2+\|V\|_2^2
\right)^{1/2}\\
&=\|U\|_2+\|V\|_2.
\end{aligned}
$$
This is the triangle inequality in $L^2$.
Since both $X+Y$ and $X'+Y$ have the same distribution and satisfy
the preceding $L^2$ bound, we have $\|U\|_2,\|V\|_2\leq\rho$.
Therefore,
$$
\bigl\|\operatorname{dist}(X-X',D)\bigr\|_2
\leq\|U\|_2+\|V\|_2
\leq2\rho.
$$
Since $|t-\phi(t)|=\operatorname{dist}(t,D)$, 
this is equivalent to
$$
\bigl\|(X-X')-\phi(X-X')\bigr\|_2\leq2\rho.
$$
Applying the same argument with
$
u=X+Y$ and $v=X+Y'
$
gives
$$
\bigl\|(Y-Y')-\phi(Y-Y')\bigr\|_2\leq2\rho.
$$
Finally, taking
$
u=X+Y$ and $ v=X'+Y'$
similarly  gives
$$
\begin{aligned}
\bigl\|\operatorname{dist}(X+Y-X'-Y',D)\bigr\|_2
&\leq
\bigl\|\operatorname{dist}(X+Y,E)\bigr\|_2
+\bigl\|\operatorname{dist}(X'+Y',E)\bigr\|_2\leq2\rho.
\end{aligned}
$$

For any $a,b\in\mathbb R$, the triangle inequality gives
$
\operatorname{dist}(a,D)
\leq |a-b|+\operatorname{dist}(b,D).
$
Indeed, for every $d\in D$,
$
|a-d|\leq|a-b|+|b-d|,
$
and taking the infimum over $d\in D$ gives the assertion.
Apply this pointwise with
$
a=\phi(X-X')+\phi(Y-Y')
$
and
$
b=(X-X')+(Y-Y')=X+Y-X'-Y'.
$
We obtain
$$
\begin{aligned}
\operatorname{dist}\bigl(
\phi(X-X')+\phi(Y-Y'),D
\bigr)
\leq&
\left|
\phi(X-X')+\phi(Y-Y')
-(X+Y-X'-Y')
\right|\\
&+
\operatorname{dist}(X+Y-X'-Y',D)\\
\leq&
\bigl|(X-X')-\phi(X-X')\bigr|
+\bigl|(Y-Y')-\phi(Y-Y')\bigr|\\
&+
\operatorname{dist}(X+Y-X'-Y',D).
\end{aligned}
$$
Taking $L^2$-norms and using the triangle inequality in $L^2$ now
gives
$$
\begin{aligned}
\bigl\|\operatorname{dist}\bigl(
\phi(X-X')+\phi(Y-Y'),D
\bigr)\bigr\|_2
\leq&
\bigl\|(X-X')-\phi(X-X')\bigr\|_2
+\bigl\|(Y-Y')-\phi(Y-Y')\bigr\|_2\\
&+
\bigl\|\operatorname{dist}
(X+Y-X'-Y',D)\bigr\|_2\\
\leq&2\rho+2\rho+2\rho
=6\rho.
\end{aligned}
$$

Set
$
\Delta_X=X-X'$
and
$
\Delta_Y=Y-Y'.
$
Since $X$ and $X'$ are independent and identically distributed,
$\Delta_X$ and $-\Delta_X=X'-X$ have the same distribution. Hence,
$$
\mathbb P(\Delta_X>1)
=\mathbb P(-\Delta_X>1)
=\mathbb P(\Delta_X<-1).
$$
The same observation applies to $\Delta_Y$. We may therefore define
$$
\begin{aligned}
\alpha
=\mathbb P(\Delta_X>1)
 =\mathbb P(\Delta_X<-1),\qquad
\beta=\mathbb P(\Delta_Y>1)
 =\mathbb P(\Delta_Y<-1).
\end{aligned}
$$
Moreover, the pair $(X,X')$ is independent of the pair $(Y,Y')$.
Hence, the random variables $\Delta_X$ and $\Delta_Y$
are independent. Consequently,
$$
\mathbb P(\Delta_X>1,\Delta_Y>1)
=
\mathbb P(\Delta_X>1)\mathbb P(\Delta_Y>1)
=\alpha\beta.
$$
Now define the nonnegative random variable
$$
R=\operatorname{dist}\bigl(
\phi(\Delta_X)+\phi(\Delta_Y),D
\bigr).
$$
On the event
$
\mathcal E=\{\Delta_X>1,\Delta_Y>1\},
$
the definition of $\phi$ gives
$
\phi(\Delta_X)=\phi(\Delta_Y)=2.
$
Therefore, on $\mathcal E$,
$$
R=\operatorname{dist}(4,D)=2.
$$
It follows that
$
R^2=4
$
on $\mathcal E$.
Hence,
$$
4\alpha\beta=4\mathbb P(\mathcal E)
=\mathbb E\bigl[R^2 1_{\mathcal E}\bigr]
\leq\mathbb E[R^2]=\|R\|_2^2.
$$
The preceding $L^2$ estimate gives $\|R\|_2\leq6\rho$. Thus,
$
4\alpha\beta
\leq\|R\|_2^2
\leq(6\rho)^2
=36\rho^2.
$
Then
$$
\alpha\beta\leq9\rho^2.
$$

Since $X$ and $Y$ are independent, we have
$
\sigma^2=\operatorname{Var}(S)
=\operatorname{Var}(X)+\operatorname{Var}(Y).
$
After interchanging $X$ and $Y$ if necessary, we may assume that
$
\operatorname{Var}(X)\geq\frac{\sigma^2}{2}.
$
It follows that
$
\operatorname{Var}(Y)\leq\frac{\sigma^2}{2}.
$
Assume that $\sigma<6\sqrt{\rho}$. Then
$\sigma^4<1296\rho^2$, and hence
$$
\operatorname{Var}(Y)
\leq\frac{\sigma^2}{2}
<\frac{648\rho^2}{\sigma^2}
<\frac{800\rho^2}{\sigma^2},
$$
as required.
Next assume that $\sigma\geq6\sqrt{\rho}$. Since
$0\leq\rho\leq1$, we have $\sqrt{\rho}\geq\rho$, and consequently
$$
\sigma\geq6\rho.
$$
Because $X$ and $X'$ are independent copies,
$$
\|X-X'\|_2^2
=\mathbb E[(X-X')^2]
=2\operatorname{Var}(X).
$$
Moreover, $\phi(X-X')$ equals $2$ on $\{X-X'>1\}$, equals $-2$ on
$\{X-X'<-1\}$, and vanishes otherwise. Since both events have probability $\alpha$, it follows that
$$
\begin{aligned}
\|\phi(X-X')\|_2^2
&=\mathbb E\bigl[\phi(X-X')^2\bigr]=4\mathbb P(X-X'>1)
 +4\mathbb P(X-X'<-1)\\
&=4\alpha+4\alpha=8\alpha.
\end{aligned}
$$
Consequently,
$$
\|\phi(X-X')\|_2=2\sqrt{2\alpha}.
$$
Since $
\|X-X'\|_2^2
=2\operatorname{Var}(X),
$ 
$
\|(X-X')-\phi(X-X')\|_2\leq2\rho,
$
$\sigma\geq6\rho$
and $\operatorname{Var}(X)\geq\frac{\sigma^2}{2},$
it follows from the triangle inequality that
$$
\begin{aligned}
2\sqrt{2\alpha}
&=\|\phi(X-X')\|_2\geq\|X-X'\|_2
-\|(X-X')-\phi(X-X')\|_2\\
&\geq\sqrt{2\operatorname{Var}(X)}-2\rho\geq\sigma-2\rho\geq\frac{2\sigma}{3}.
\end{aligned}
$$
It follows that
$$
\alpha\geq\frac{\sigma^2}{18}.
$$
Combining this with $\alpha\beta\leq9\rho^2$ gives
$$
\beta\leq\frac{9\rho^2}{\alpha}
\leq\frac{162\rho^2}{\sigma^2}.
$$
The same $L^2$ estimate for $Y-Y'$ gives
$$
\begin{aligned}
\sqrt{2\operatorname{Var}(Y)}
&=\|Y-Y'\|_2\leq
\|(Y-Y')-\phi(Y-Y')\|_2
+\|\phi(Y-Y')\|_2\\
&\leq2\rho+2\sqrt{2\beta}\leq2\rho+\frac{36\rho}{\sigma}.
\end{aligned}
$$
Recall that $\mathbb E[(|S|-1)^2]\leq \rho^2$.
Then
$$
\sigma
=\sqrt{\operatorname{Var}(S)}
\leq\|S\|_2=\||S|\|_2\leq\|1\|_2+\||S|-1\|_2\leq1+\rho\leq2.
$$
Thus,
$
2\rho=\frac{2\sigma\rho}{\sigma}
\leq\frac{4\rho}{\sigma}.
$
Consequently,
$
\sqrt{2\operatorname{Var}(Y)}
\leq\frac{40\rho}{\sigma}.
$
Squaring this inequality yields
$$
\operatorname{Var}(Y)
\leq\frac{800\rho^2}{\sigma^2}.
$$
This proves the  claim.
\end{proof}

We now apply the two-summand estimate to the original sum
$L=\sum_{i=1}^nY_i$. Put
$
V=\operatorname{Var}(L)
$
and
$
K=\frac{800\eta}{V}.
$

Suppose first that $V\leq K$. Since $Y_1,\ldots,Y_n$ are independent and each
variance is nonnegative, for every $k\in[n]$, we have
$$
\sum_{i\neq k}\operatorname{Var}(Y_i)
\leq
\sum_{i=1}^n\operatorname{Var}(Y_i)
=V\leq K\leq \frac{1600\eta}{V},
$$
as required.

Assume now that $V>K$. Since
$
\operatorname{Var}\Bigl(\sum_{i\in[n]}Y_i\Bigr)=V>K,
$
there exists a nonempty set $I\subseteq[n]$ that is
inclusion-minimal satisfying
$$
\operatorname{Var}\Bigl(\sum_{i\in I}Y_i\Bigr)>K.
$$
Fix $k\in I$. By the minimality of $I$, we have
$$
\operatorname{Var}\Bigl(
\sum_{i\in I\setminus\{k\}}Y_i
\Bigr)\leq K.
$$
Apply the two-summand estimate to
$
X=\sum_{i\in I}Y_i
$
and
$
Y=\sum_{i\notin I}Y_i.
$
These two random variables are independent and satisfy $X+Y=L$.
Then Claim \ref{cl1} gives 
$$
\min\{\operatorname{Var}(X),\operatorname{Var}(Y)\}\leq \frac{800\eta}{V}=K.
$$
By the choice of $I$, we have $\operatorname{Var}(X)>K$, so necessarily
$$
\operatorname{Var}(Y)
=
\operatorname{Var}\Bigl(\sum_{i\notin I}Y_i\Bigr)
\leq K.
$$
Consequently,
$$
\begin{aligned}
\operatorname{Var}\Bigl(\sum_{i\neq k}Y_i\Bigr)
=
\operatorname{Var}\Bigl(
\sum_{i\in I\setminus\{k\}}Y_i
\Bigr)
+
\operatorname{Var}\Bigl(
\sum_{i\notin I}Y_i
\Bigr)\leq2K
=\frac{1600\eta}{V}.
\end{aligned}
$$
This completes the proof.
\end{proof}

For $\mathbf r=(r_1,\ldots,r_n)\in(0,1)^n$ and a function
$g:2^{[n]}\to\mathbb R$, recall that
$$
\mathbb E_{\mu_{\mathbf r}}[g]
=
\sum_{A\subseteq[n]}\mu_{\mathbf r}(\{A\})g(A),\qquad
\operatorname{Var}_{\mu_{\mathbf r}}(g)
=
\mathbb E_{\mu_{\mathbf r}}\left[
\bigl(g-\mathbb E_{\mu_{\mathbf r}}[g]\bigr)^2
\right].
$$
We also write
$$
\|g\|_{2,\mu_{\mathbf r}}
=
\left(\mathbb E_{\mu_{\mathbf r}}[g^2]\right)^{1/2},\qquad
W_{>1}^{\mu_{\mathbf r}}(g)
=
\sum_{T\subseteq[n], |T|\geq2}
\bigl(\widehat g^{\,\mu_{\mathbf r}}(T)\bigr)^2,
$$
where $\widehat g^{\,\mu_{\mathbf r}}(T)$ is the coefficient indexed
by $T$ in the orthonormal Fourier expansion associated with
$\mu_{\mathbf r}$. Thus, $W_{>1}^{\mu_{\mathbf r}}(g)$ is the total
Fourier weight of $g$ on levels at least two.

The preceding estimate yields the following one-coordinate approximation.

\begin{lemma}\label{lem:one-coordinate-approximation}
Let $\mathbf r=(r_1,\ldots,r_n)\in(0,1)^n$, and let
$\mu_{\mathbf r}$ be the $\mathbf r$-biased measure on
$2^{[n]}$. Suppose that $\mathcal A\subseteq2^{[n]}$ satisfies
$
0<\mu_{\mathbf r}(\mathcal A)<1,
$
and put $f=1_{\mathcal A}$. Then there exists a family
$$
\mathcal H\in
\{\emptyset,2^{[n]}\}
\cup
\{\mathcal S_j,\overline{\mathcal S}_j:j\in[n]\}
$$
such that
$$
\mu_{\mathbf r}(\mathcal A\triangle\mathcal H)
\leq
\frac{3201}
{\operatorname{Var}_{\mu_{\mathbf r}}(f)}
W_{>1}^{\mu_{\mathbf r}}(f),
$$
where
$
\mathcal S_j=\{A\subseteq[n]:j\in A\}
$
and
$
\overline{\mathcal S}_j
=\{A\subseteq[n]:j\notin A\}.
$
\end{lemma}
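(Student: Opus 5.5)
We apply Lemma~\ref{lem:one-large-summand} to the degree-one part of a suitably normalized indicator. Put $\mu=\mathbb E_{\mu_{\mathbf r}}[f]\in(0,1)$, $W=W_{>1}^{\mu_{\mathbf r}}(f)$ and $\sigma^2=\operatorname{Var}_{\mu_{\mathbf r}}(f)=\mu(1-\mu)$. We may assume $W\le \sigma^2/3201$, since otherwise the right-hand side exceeds $1$ and any $\mathcal H$ works.

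The normalization $g=2f-1$ takes values in $\{-1,1\}$, so $|g|=1$. Write $g=\widehat g(\emptyset)+L+H$, with $L=\sum_{j}\widehat g(\{j\})v(\{j\})$ the level-one part and $H$ the part above level one. Each $v(\{j\})$ depends only on coordinate $j$, so $\widehat g(\emptyset)+L$ is a sum of independent random variables $Y_j$, after absorbing the constant into, say, $Y_1$. We have $\|H\|_2^2=4W$, and since $|g|=1$,
\[
\mathbb E\bigl[(|\widehat g(\emptyset)+L|-1)^2\bigr]\le\|H\|_2^2=4W=:\eta,
\]
because $\bigl||a|-1\bigr|\le|a-g|$ whenever $|g|=1$. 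Also $\operatorname{Var}(L)=4\sigma^2-4W$, which is at least $2\sigma^2>0$ under our assumption on $W$.

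Lemma~\ref{lem:one-large-summand} then gives an index $k$ with
\[
\operatorname{Var}\Bigl(\sum_{j\ne k}Y_j\Bigr)\le\frac{1600\eta}{\operatorname{Var}(L)}\lesssim\frac{W}{\sigma^2}.
\]
Hence $g$ is $L^2$-close to a function $c+dY_k$ of coordinate $k$ alone: the squared error is at most $4W$ plus a term of order $W/\sigma^2$. Since coordinate $k$ takes only two values, this function is $h=\alpha\mathbf 1_{\{k\notin A\}}+\beta\mathbf 1_{\{k\in A\}}$ for real $\alpha,\beta$.

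Next we round. Replace $h$ by $\tilde h=\operatorname{sgn}(h)\in\{-1,1\}$, which again depends only on coordinate $k$. Pointwise, $|g-\tilde h|\le 2|g-h|$, because $g=\pm1$ and rounding to the nearest sign at most doubles the distance. The function $\tilde h$ is $\pm1$ on each of the two values of coordinate $k$, so $\tilde h=2\mathbf 1_{\mathcal H}-1$ with $\mathcal H\in\{\emptyset,2^{[n]},\mathcal S_k,\overline{\mathcal S}_k\}$. Finally,
\[
\mu_{\mathbf r}(\mathcal A\triangle\mathcal H)=\tfrac14\|g-\tilde h\|_2^2\le\|g-h\|_2^2 .
\]

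The main obstacle is making the constants close up to $3201/\sigma^2$. The bookkeeping runs as follows. The error $\|g-h\|_2^2$ equals $\|H\|_2^2+\operatorname{Var}(\sum_{j\ne k}Y_j)$, where the constant is kept inside the $k$-part and the cross terms vanish by orthogonality of levels and independence. This gives $4W+6400W/\operatorname{Var}(L)\le 4W+3200W/\sigma^2$. The factor $1/4$ from the symmetric-difference identity cancels against the factor $4$ lost in rounding. Since $\sigma^2\le 1/4$, the term $4W\le W/\sigma^2$, so the total is at most $3201W/\sigma^2$. Should the orthogonal-decomposition accounting come out slightly differently, the fallback is to redistribute constants using the case $W>\sigma^2/3201$, which is trivial.
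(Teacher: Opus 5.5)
Your proof is correct and follows the paper's argument essentially step for step. You use the same decomposition of $2f-1$ into constant, level-one and higher parts, and you apply Lemma~\ref{lem:one-large-summand} in the same way, with $\eta=4W$ and $\operatorname{Var}(L)\ge 2\sigma^2$. You reach the same final bound $4W+3200W/\sigma^2\le 3201W/\sigma^2$ by the same sign-rounding: your pointwise $|g-\tilde h|\le 2|g-h|$ is the paper's $\mathbf 1_{\mathcal A\triangle\mathcal H}\le (F-G)^2$.

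The only difference is the trivial case. You dismiss $W>\sigma^2/3201$, where the paper instead handles $w>s^2/2$ with $\emptyset$ or $2^{[n]}$. Your threshold is slightly cleaner and makes the hypothesis $\eta\le 1$ of Lemma~\ref{lem:one-large-summand} immediate, though you should state that check explicitly. The closing ``fallback'' remark is unnecessary, since your bookkeeping already closes exactly.
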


\begin{proof}
Put
$
F=2f-1.
$
Then $F$ takes values in $\{-1,1\}$. Since
$
\{v_{\mathbf r}(T):T\subseteq[n]\}
$
is an orthonormal basis with respect to
$\langle\cdot,\cdot\rangle_{\mu_{\mathbf r}}$, the function $F$ has
the unique Fourier expansion
$$
F
=
\sum_{T\subseteq[n]}
\widehat F_{\mathbf r}(T)v_{\mathbf r}(T),
$$
where
$
\widehat F_{\mathbf r}(T)
=
\langle F,v_{\mathbf r}(T)\rangle_{\mu_{\mathbf r}}.
$
Since $v_{\mathbf r}(\emptyset)=1$, we have
$
\widehat F_{\mathbf r}(\emptyset)
=
\langle F,1\rangle_{\mu_{\mathbf r}}
=
\mathbb E_{\mu_{\mathbf r}}[F].
$
Separating the terms according to their degrees, define
$$
m
=
\widehat F_{\mathbf r}(\emptyset)
=
\mathbb E_{\mu_{\mathbf r}}[F],
\qquad
X_i
=
\widehat F_{\mathbf r}(\{i\})
v_{\mathbf r}(\{i\}),
\qquad 
R
=
\sum_{T\subseteq[n], |T|\geq2}
\widehat F_{\mathbf r}(T)v_{\mathbf r}(T).
$$
Then $m$ is finite and
$$
F=m+\sum_{i=1}^nX_i+R.
$$
The basis function $v_{\mathbf r}(\{i\})$ depends only on whether
$i$ belongs to the random set. Hence, $X_i$ depends only on coordinate
$i$. Since the
basis is orthonormal and 
$v_{\mathbf r}(\emptyset)=1$, we have
$
\left\langle
v_{\mathbf r}(\{i\}),1
\right\rangle_{\mu_{\mathbf r}}
=0.
$
Thus,
$$
\mathbb E_{\mu_{\mathbf r}}[X_i]
=
\widehat F_{\mathbf r}(\{i\})
\mathbb E_{\mu_{\mathbf r}}
[v_{\mathbf r}(\{i\})]
=0.
$$
Since the coordinates are independent under
$\mu_{\mathbf r}$,
$X_1,\ldots,X_n$ are independent and have mean zero.

Set
$$
w=W_{>1}^{\mu_{\mathbf r}}(f),
\qquad
s^2=\operatorname{Var}_{\mu_{\mathbf r}}(f).
$$
Since $f=1_{\mathcal A}$, we have $f^2=f$ and
$
\mathbb E_{\mu_{\mathbf r}}[f]
=
\mu_{\mathbf r}(\mathcal A).
$
Thus,
$$
\begin{aligned}
s^2=\operatorname{Var}_{\mu_{\mathbf r}}(f)=\mathbb E_{\mu_{\mathbf r}}[f^2]
-\bigl(\mathbb E_{\mu_{\mathbf r}}[f]\bigr)^2=\mu_{\mathbf r}(\mathcal A)
-\mu_{\mathbf r}(\mathcal A)^2=\mu_{\mathbf r}(\mathcal A)
\bigl(1-\mu_{\mathbf r}(\mathcal A)\bigr).
\end{aligned}
$$
It follows that
$$
0<s^2\leq\frac14.
$$
For every nonempty $T\subseteq[n]$, orthogonality to the constant
function gives
$
\left\langle
1,v_{\mathbf r}(T)
\right\rangle_{\mu_{\mathbf r}}=0.
$
Hence,
$$
\begin{aligned}
\widehat F_{\mathbf r}(T)
=
\left\langle
2f-1,v_{\mathbf r}(T)
\right\rangle_{\mu_{\mathbf r}}=
2\widehat f_{\mathbf r}(T).
\end{aligned}
$$
Recall that
$
\|R\|_{2,\mu_{\mathbf r}}^2
=
\mathbb E_{\mu_{\mathbf r}}[R^2]
=
\langle R,R\rangle_{\mu_{\mathbf r}}.
$
Since
$
R=
\sum_{T\subseteq[n], |T|\geq2}
\widehat F_{\mathbf r}(T)v_{\mathbf r}(T),
$
orthonormality gives
$$
\begin{aligned}
\|R\|_{2,\mu_{\mathbf r}}^2
&=
\sum_{T,U\subseteq[n], |T|,|U|\geq2}
\widehat F_{\mathbf r}(T)
\widehat F_{\mathbf r}(U)
\left\langle
v_{\mathbf r}(T),v_{\mathbf r}(U)
\right\rangle_{\mu_{\mathbf r}}\\
&=
\sum_{T\subseteq[n], |T|\geq2}
\widehat F_{\mathbf r}(T)^2=
4\sum_{T\subseteq[n], |T|\geq2}
\widehat f_{\mathbf r}(T)^2\\
&=4W_{>1}^{\mu_{\mathbf r}}(f)
=4w.
\end{aligned}
$$

Since $v_{\mathbf r}(\emptyset)=1$, we have
$
\widehat f_{\mathbf r}(\emptyset)
=
\mathbb E_{\mu_{\mathbf r}}[f].
$
Then
$
f-\mathbb E_{\mu_{\mathbf r}}[f]
=
\sum_{T\subseteq[n], T\neq\emptyset}
\widehat f_{\mathbf r}(T)v_{\mathbf r}(T).
$
Using orthonormality, we obtain
$$
\begin{aligned}
s^2
&=\operatorname{Var}_{\mu_{\mathbf r}}(f)
=
\left\|
f-\mathbb E_{\mu_{\mathbf r}}[f]
\right\|_{2,\mu_{\mathbf r}}^2=
\left\langle
\sum_{T\neq\emptyset}
\widehat f_{\mathbf r}(T)v_{\mathbf r}(T),
\sum_{U\neq\emptyset}
\widehat f_{\mathbf r}(U)v_{\mathbf r}(U)
\right\rangle_{\mu_{\mathbf r}}\\
&=
\sum_{T\neq\emptyset}
\widehat f_{\mathbf r}(T)^2=
\sum_{i=1}^n
\widehat f_{\mathbf r}(\{i\})^2
+
W_{>1}^{\mu_{\mathbf r}}(f)=
\sum_{i=1}^n
\widehat f_{\mathbf r}(\{i\})^2+w.
\end{aligned}
$$
Since  $X_1,\ldots,X_n$ are independent and have mean
zero, we have
\begin{align}
\operatorname{Var}_{\mu_{\mathbf r}}
\Bigl(\sum_{i=1}^nX_i\Bigr)
&=
\sum_{i=1}^n\|X_i\|_{2,\mu_{\mathbf r}}^2=\sum_{i=1}^n\left\langle X_i,X_i\right\rangle_{\mu_{\mathbf r}}\notag\\
&=
\sum_{i=1}^n
\widehat F_{\mathbf r}(\{i\})^2=
4\sum_{i=1}^n
\widehat f_{\mathbf r}(\{i\})^2\notag\\
&=4(s^2-w).\label{af41}
\end{align}

Suppose first that $w>s^2/2$. Define
$$
\mathcal H=
\begin{cases}
\emptyset,
&\mu_{\mathbf r}(\mathcal A)\leq1/2,\\
2^{[n]},
&\mu_{\mathbf r}(\mathcal A)>1/2.
\end{cases}
$$
If $\mathcal H=\emptyset$, then
$
\mu_{\mathbf r}(\mathcal A\triangle\mathcal H)
=
\mu_{\mathbf r}(\mathcal A),
$
whereas if $\mathcal H=2^{[n]}$, then
$
\mu_{\mathbf r}(\mathcal A\triangle\mathcal H)
=
1-\mu_{\mathbf r}(\mathcal A).
$
Consequently,
$$
\mu_{\mathbf r}(\mathcal A\triangle\mathcal H)
=
\min\left\{
\mu_{\mathbf r}(\mathcal A),
1-\mu_{\mathbf r}(\mathcal A)
\right\}
\leq\frac12.
$$
Since $w>s^2/2$, we have
$$
\frac12<\frac{w}{s^2}
\leq\frac{3201w}{s^2}.
$$
Therefore,
$$
\mu_{\mathbf r}(\mathcal A\triangle\mathcal H)
\leq\frac{3201w}{s^2},
$$
as required.

Next assume that
$
w\leq\frac{s^2}{2}.
$
Define
$
L=m+\sum_{i=1}^nX_i.
$
For every $A\subseteq[n]$, we have $F(A)\in\{-1,1\}$. Moreover,
$$
\begin{aligned}
\operatorname{dist}\bigl(L(A),\{-1,1\}\bigr)
=
\min\{|L(A)-1|,|L(A)+1|\}=
\bigl||L(A)|-1\bigr|.
\end{aligned}
$$
Since $F(A)$ is one of the two points $-1$ and $1$, we have
$$
\bigl||L(A)|-1\bigr|
\leq|L(A)-F(A)|.
$$
Thus, 
$
\bigl||L|-1\bigr|^2\leq(L-F)^2.
$
Taking expectation with respect to $\mu_{\mathbf r}$ gives
$$
\mathbb E_{\mu_{\mathbf r}}\bigl[(|L|-1)^2\bigr]
\leq
\mathbb E_{\mu_{\mathbf r}}\bigl[(L-F)^2\bigr].
$$
Since
$
F=L+R,
$
we have $L-F=-R$, and hence
$$
\begin{aligned}
\mathbb E_{\mu_{\mathbf r}}\bigl[(|L|-1)^2\bigr]
\leq
\mathbb E_{\mu_{\mathbf r}}[R^2]=\|R\|_{2,\mu_{\mathbf r}}^2=4w.
\end{aligned}
$$
Since $w\leq s^2/2$ and $s^2\leq1/4$, it follows that
$
4w\leq2s^2\leq\frac12.
$
By \eqref{af41}, we have
$$
\begin{aligned}
\operatorname{Var}_{\mu_{\mathbf r}}(L)
=
\operatorname{Var}_{\mu_{\mathbf r}}
\Bigl(\sum_{i=1}^nX_i\Bigr)=4(s^2-w)\geq2s^2>0.
\end{aligned}
$$
We now apply Lemma~\ref{lem:one-large-summand} to the independent
random variables
$$
Y_1=m+X_1,
\qquad
Y_i=X_i\quad\text{for }i=2,\ldots,n.
$$
Their sum is $L$.  Moreover, these random variables have finite second moments. Indeed,
by the Cauchy--Schwarz inequality,
$
\left|\widehat F_{\mathbf r}(\{i\})\right|
\leq
\|F\|_{2,\mu_{\mathbf r}}
\|v_{\mathbf r}(\{i\})\|_{2,\mu_{\mathbf r}}
=1,
$
and hence
$
\mathbb E_{\mu_{\mathbf r}}[X_i^2]
=
\widehat F_{\mathbf r}(\{i\})^2
<\infty.
$
Since $m$ is finite and $\mathbb E_{\mu_{\mathbf r}}[X_1]=0$, we have
$$
\begin{aligned}
\mathbb E_{\mu_{\mathbf r}}[(m+X_1)^2]
&=
m^2
+2m\mathbb E_{\mu_{\mathbf r}}[X_1]
+\mathbb E_{\mu_{\mathbf r}}[X_1^2]\\
&=
m^2+\mathbb E_{\mu_{\mathbf r}}[X_1^2]<\infty.
\end{aligned}
$$
Taking $4w$ as the parameter $\eta$ in that Lemma~\ref{lem:one-large-summand},
we obtain an index $k\in[n]$ such that
$$
\operatorname{Var}_{\mu_{\mathbf r}}
\Bigl(\sum_{i\neq k}Y_i\Bigr)
\leq
\frac{1600\cdot4w}
{4(s^2-w)}.
$$
The random variable $\sum_{i\neq k}Y_i$ differs from
$\sum_{i\neq k}X_i$ by a constant, either $0$ or $m$. Hence, their
variances are equal. Since $s^2-w\geq s^2/2$, it follows that
$$
\begin{aligned}
\operatorname{Var}_{\mu_{\mathbf r}}
\Bigl(\sum_{i\neq k}X_i\Bigr)
\leq
\frac{1600\cdot4w}
{4(s^2-w)}\leq
\frac{3200w}{s^2}.
\end{aligned}
$$

Let
$
G=m+X_k.
$
Then
$
F-G=\sum_{i\neq k}X_i+R.
$
Put
$
U=\sum_{i\neq k}X_i.
$
Observe that $U$ is a sum of degree-one Fourier components, whereas
$R$ is a sum of Fourier components of degree at least two. Hence,
by orthogonality of the Fourier basis,
$
\langle U,R\rangle_{\mu_{\mathbf r}}=0.
$
It follows that
$$
\begin{aligned}
\|F-G\|_{2,\mu_{\mathbf r}}^2
&=\|U+R\|_{2,\mu_{\mathbf r}}^2=\|U\|_{2,\mu_{\mathbf r}}^2
+2\langle U,R\rangle_{\mu_{\mathbf r}}
+\|R\|_{2,\mu_{\mathbf r}}^2\\
&=\|U\|_{2,\mu_{\mathbf r}}^2
+\|R\|_{2,\mu_{\mathbf r}}^2.
\end{aligned}
$$
Since each $X_i$ has mean zero, we have
$
\mathbb E_{\mu_{\mathbf r}}[U]=0.
$
Therefore,
$$
\|U\|_{2,\mu_{\mathbf r}}^2
=
\operatorname{Var}_{\mu_{\mathbf r}}(U).
$$
Using the preceding estimates, we obtain
$$
\begin{aligned}
\|F-G\|_{2,\mu_{\mathbf r}}^2
&=
\operatorname{Var}_{\mu_{\mathbf r}}
\Bigl(\sum_{i\neq k}X_i\Bigr)
+\|R\|_{2,\mu_{\mathbf r}}^2\leq
\frac{3200w}{s^2}+4w.
\end{aligned}
$$
Since $s^2\leq1/4$, we have
$
4w\leq\frac{w}{s^2}.
$
Thus,
$$
\|F-G\|_{2,\mu_{\mathbf r}}^2
\leq
\frac{3201w}{s^2}.
$$

Recall that $
G=m+X_k.$
Define a $\{-1,1\}$-valued function $\widetilde F$ by
$$
\widetilde F(A)=
\begin{cases}
1,&G(A)\geq0,\\
-1,&G(A)<0.
\end{cases}
$$
If $\widetilde F(A)\neq F(A)$, then one of the following two cases
occurs. If $F(A)=1$, then $\widetilde F(A)=-1$, and the definition of
$\widetilde F$ implies that $G(A)<0$. Hence,
$
|F(A)-G(A)|
=
|1-G(A)|
=
1-G(A)
\geq1.
$
If $F(A)=-1$, then $\widetilde F(A)=1$, so $G(A)\geq0$. Therefore,
$
|F(A)-G(A)|
=
|-1-G(A)|
=
1+G(A)
\geq1.
$
Thus, in either case,
\begin{align}
|F(A)-G(A)|\geq1.\label{f34}
\end{align}
Since
$
X_k
=
\widehat F_{\mathbf r}(\{k\})
v_{\mathbf r}(\{k\}),
$
the value of $X_k(B)$ depends only on whether $k\in B$. Hence,
$G(B)=m+X_k(B)$ and  $\widetilde F(B)$ depend only
on whether $k\in B$.
Thus, $\widetilde F$ is constant on each of the two classes
$$
\{B\subseteq[n]:k\in B\}
\qquad\text{and}\qquad
\{B\subseteq[n]:k\notin B\}.
$$
Define
$$
\mathcal H
=
\{B\subseteq[n]:\widetilde F(B)=1\}.
$$
Since $\widetilde F$ is constant on each of the two classes
$
\{B\subseteq[n]:k\in B\}
$
and
$
\{B\subseteq[n]:k\notin B\},
$
there are only four possibilities. If $\widetilde F=-1$ on both
classes, then $\mathcal H=\emptyset$. If $\widetilde F=1$ on both
classes, then $\mathcal H=2^{[n]}$. If $\widetilde F=1$ exactly when
$k\in B$, then $\mathcal H=\mathcal S_k$. Finally, if
$\widetilde F=1$ exactly when $k\notin B$, then
$\mathcal H=\overline{\mathcal S}_k$. Therefore,
$$
\mathcal H\in
\{\emptyset,2^{[n]},
\mathcal S_k,\overline{\mathcal S}_k\}.
$$

For every $B\subseteq[n]$, the definition $F=2\,1_{\mathcal A}-1$
shows that $F(B)=1$ when $B\in\mathcal A$ and $F(B)=-1$ when
$B\notin\mathcal A$. Similarly, by the definition of $\mathcal H$,
the function $\widetilde F$ equals $1$ on $\mathcal H$ and $-1$
outside $\mathcal H$. Therefore, $F(B)$ and $\widetilde F(B)$ are
different precisely when $B$ belongs to the symmetric difference
$\mathcal A\triangle\mathcal H$.
If $B\in\mathcal A\triangle\mathcal H$, then
$F(B)\neq\widetilde F(B)$. By \eqref{f34}, we have
$$
|F(B)-G(B)|\geq1.
$$
Consequently, for every $B\subseteq[n]$,
$$
1_{\mathcal A\triangle\mathcal H}(B)
\leq
(F(B)-G(B))^2.
$$
Taking expectation with respect to $\mu_{\mathbf r}$ gives
$$
\begin{aligned}
\mu_{\mathbf r}(\mathcal A\triangle\mathcal H)
&=
\mathbb E_{\mu_{\mathbf r}}
[1_{\mathcal A\triangle\mathcal H}]\leq
\mathbb E_{\mu_{\mathbf r}}\bigl[(F-G)^2\bigr]\\
&=
\|F-G\|_{2,\mu_{\mathbf r}}^2\leq
\frac{3201w}{s^2}\\
&=
\frac{3201}
{\operatorname{Var}_{\mu_{\mathbf r}}(f)}
W_{>1}^{\mu_{\mathbf r}}(f).
\end{aligned}
$$
This proves the lemma.
\end{proof}

Recall that
$$
 p=p_1,\qquad q=q_1,\qquad
 \mu_1=\mu_{\mathbf p},\qquad
 \mu_2=\mu_{\mathbf q}.
$$
and
$$
 0<q\leq p<\frac12,\qquad
 p_\ell\leq p,\qquad q_\ell\leq q
 \quad\text{for every }\ell\in[n].
$$
Near extremality also keeps the measures of the two increasing
families away from zero and one. This provides the variance lower
bound needed when Lemma~\ref{lem:one-coordinate-approximation} is
applied.

\begin{lemma}\label{lem:variance-lower-bound}
Suppose that $\mathcal A,\mathcal B\subseteq 2^{[n]}$ are increasing
and cross-intersecting, $0\leq\varepsilon\leq 1/2$, and
$$
\mu_1(\mathcal A)\mu_2(\mathcal B)
\geq
(1-\varepsilon)pq.
$$
Set
$
\rho_0=\frac{pq}{2}.
$
Then
$$
\rho_0\leq\mu_1(\mathcal A)\leq1-\rho_0,
\qquad
\rho_0\leq\mu_2(\mathcal B)\leq1-\rho_0.
$$
Moreover,
$$
\operatorname{Var}_{\mu_1}(1_{\mathcal A})
\geq
\rho_0(1-\rho_0),
\qquad
\operatorname{Var}_{\mu_2}(1_{\mathcal B})
\geq
\rho_0(1-\rho_0).
$$
\end{lemma}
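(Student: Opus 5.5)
The lower bounds come from the near-extremal product together with the trivial bound that a measure is at most $1$. Since $\mu_2(\mathcal B)\le 1$, we get
\[
\mu_1(\mathcal A)\ge(1-\varepsilon)pq\ge \tfrac{pq}{2}=\rho_0,
\]
using $\varepsilon\le1/2$. Symmetrically, $\mu_2(\mathcal B)\ge(1-\varepsilon)pq\ge\rho_0$.

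For the upper bounds I will use the sharp inequality of Theorem~\ref{thm:main-product}. Its hypotheses hold here: $p_1q_1=pq$ is the maximal product, since $p_\ell\le p$ and $q_\ell\le q$ for every $\ell$. Hence $\mu_1(\mathcal A)\mu_2(\mathcal B)\le pq$, and so
\[
\mu_1(\mathcal A)\le \frac{pq}{\mu_2(\mathcal B)}.
\]
This alone is too weak, because $\mu_2(\mathcal B)$ could be as small as $\rho_0$, which would give only $\mu_1(\mathcal A)\le 2$. I therefore intend to argue directly that $\mu_1(\mathcal A)\le 1-\rho_0$.

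\textbf{Bounding $\mu_1(\mathcal A)$ from above.} Since $\mathcal B$ is nonempty and increasing, it contains $[n]$. Cross-intersection then forces $\emptyset\notin\mathcal A$, so
\[
\mu_1(\mathcal A)\le 1-\prod_\ell(1-p_\ell).
\]
That product could be tiny when $n$ is large, so I need something better. I will use the transversal bound instead. Because $\mathcal A$ and $\mathcal B$ are increasing and cross-intersecting, $\mathcal B\subseteq\mathcal A^*$. Lemma~\ref{lem:transversal} gives
\[
\mu_2(\mathcal B)\le 1-\mu_{\mathbf 1-\mathbf q}(\mathcal A).
\]
Since $p_\ell\le 1/2\le 1-q_\ell$ coordinatewise, monotonicity of increasing families (shown in the boundary equality subsection) gives $\mu_{\mathbf 1-\mathbf q}(\mathcal A)\ge\mu_1(\mathcal A)$. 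Therefore
\[
\rho_0\le\mu_2(\mathcal B)\le 1-\mu_1(\mathcal A),
\]
that is, $\mu_1(\mathcal A)\le 1-\rho_0$.

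\textbf{Bounding $\mu_2(\mathcal B)$ from above.} The symmetric argument applies. We have $\mathcal A\subseteq\mathcal B^*$, so
\[
\mu_1(\mathcal A)\le 1-\mu_{\mathbf 1-\mathbf p}(\mathcal B)\le 1-\mu_2(\mathcal B),
\]
because $q_\ell\le 1-p_\ell$. Since $\mu_1(\mathcal A)\ge\rho_0$, this gives $\mu_2(\mathcal B)\le1-\rho_0$.

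\textbf{The variance bounds.} For an indicator, $\operatorname{Var}_{\mu_1}(1_{\mathcal A})=t(1-t)$ with $t=\mu_1(\mathcal A)$. The function $t\mapsto t(1-t)$ is concave and symmetric about $1/2$, so on $[\rho_0,1-\rho_0]$ it is minimized at the endpoints, giving $\operatorname{Var}_{\mu_1}(1_{\mathcal A})\ge\rho_0(1-\rho_0)$. The same holds for $\mathcal B$.

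The only step needing care is the upper bound. The transversal-plus-monotonicity comparison is the key step, and it requires just $p_\ell+q_\ell\le1$, which holds here since all coordinates are below $1/2$.
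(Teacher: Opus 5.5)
Your proof is correct and follows the paper's own argument. The lower bounds come from the product hypothesis together with measures being at most one; the upper bounds come from $\mathcal B\subseteq\mathcal A^*$, Lemma~\ref{lem:transversal}, and coordinatewise monotonicity of increasing families; and the variance bound is $t(1-t)\ge\rho_0(1-\rho_0)$ on $[\rho_0,1-\rho_0]$. The detours through Theorem~\ref{thm:main-product} and $\emptyset\notin\mathcal A$ are unnecessary, and your second transversal argument for $\mathcal B$ is redundant, since the single inequality $\mu_1(\mathcal A)+\mu_2(\mathcal B)\le 1$ already yields both upper bounds.
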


\begin{proof}
Since each family has measure at most one, we have
$$
\begin{aligned}
\mu_1(\mathcal A)
\geq
\mu_1(\mathcal A)\mu_2(\mathcal B)
\geq
(1-\varepsilon)pq
\geq\rho_0,\qquad
\mu_2(\mathcal B)
\geq
\mu_1(\mathcal A)\mu_2(\mathcal B)
\geq
(1-\varepsilon)pq
\geq\rho_0.
\end{aligned}
$$

We next prove the upper bounds. Recall that the transversal of
$\mathcal A$ is
$$
\mathcal A^*
=
\{B\subseteq[n]:
B\cap A\neq\emptyset
\text{ for every }A\in\mathcal A\}.
$$
Since $\mathcal A$ and $\mathcal B$ are cross-intersecting, we have
$\mathcal B\subseteq\mathcal A^*$. Because $\mathcal A$ is
increasing, Lemma~\ref{lem:transversal} gives
$$
\mu_2(\mathcal A^*)
=
1-\mu_{\mathbf 1-\mathbf q}(\mathcal A).
$$
Moreover, since $p_i,q_i<1/2$, we have
$p_i\leq1-q_i$ for every $i\in[n]$. As mentioned in Subsection \ref{sec:endpoint}, the measure of an increasing
family is nondecreasing in each coordinate probability.
Hence,
$$
\mu_{\mathbf p}(\mathcal A)
\leq
\mu_{\mathbf 1-\mathbf q}(\mathcal A).
$$
Consequently,
$$
\begin{aligned}
\mu_2(\mathcal B)
&\leq
\mu_2(\mathcal A^*)=
1-\mu_{\mathbf 1-\mathbf q}(\mathcal A)\leq
1-\mu_1(\mathcal A).
\end{aligned}
$$
Since $\mu_2(\mathcal B)\geq\rho_0$, it follows that
$\mu_1(\mathcal A)\leq1-\rho_0$. Applying the same argument to
$\mathcal B$, and using $q_i\leq1-p_i$ for every $i$, gives
$\mu_2(\mathcal B)\leq1-\rho_0$.

Finally, if $t=\mu_1(\mathcal A)$, then
$
\operatorname{Var}_{\mu_1}(1_{\mathcal A})
=
\mathbb E_{\mu_1}[1_{\mathcal A}^2]
-
\mathbb E_{\mu_1}[1_{\mathcal A}]^2=
t-t^2=
t(1-t).
$
Since $\rho_0\leq t\leq1-\rho_0$, we have
$$
t(1-t)\geq\rho_0(1-\rho_0).
$$
The same argument applies to $1_{\mathcal B}$ under $\mu_2$.
\end{proof}

The next observation rules out the decreasing one-coordinate family
$
\overline{\mathcal S}_j
=
\{A\subseteq[n]:j\notin A\}
$
when the family being approximated is increasing.

\begin{lemma}\label{lem:exclude-antistar}
Let $\mathbf r=(r_1,\ldots,r_n)\in(0,1/2)^n$, let
$\mathcal A\subseteq2^{[n]}$ be increasing, and let $j\in[n]$. If
$
\mu_{\mathbf r}
\bigl(\mathcal A\triangle\overline{\mathcal S}_j\bigr)
\leq d,
$
then
$$
\mu_{\mathbf r}
\bigl(\mathcal A\triangle2^{[n]}\bigr)
\leq2d.
$$
\end{lemma}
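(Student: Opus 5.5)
Our plan is to split the symmetric difference $\mathcal A\triangle\overline{\mathcal S}_j$ according to whether $j$ lies in the set. Put
\[
 \mathcal A\triangle\overline{\mathcal S}_j
 =\bigl(\mathcal S_j\cap\mathcal A\bigr)\cup\bigl(\overline{\mathcal S}_j\setminus\mathcal A\bigr),
\]
and use the sections $\mathcal A(\bar j)$ and $\mathcal A(j)$ on $[n]\setminus\{j\}$. With $r=r_j$ and $\nu=\mu_{\mathbf r_{-j}}$, this gives
\[
 \mu_{\mathbf r}(\mathcal A\triangle\overline{\mathcal S}_j)
 =r\,\nu(\mathcal A(j))+(1-r)\bigl(1-\nu(\mathcal A(\bar j))\bigr)\le d .
\]
The target quantity is the complement of $\mathcal A$:
\[
 \mu_{\mathbf r}(\mathcal A\triangle 2^{[n]})
 =1-\mu_{\mathbf r}(\mathcal A)
 =r\bigl(1-\nu(\mathcal A(j))\bigr)+(1-r)\bigl(1-\nu(\mathcal A(\bar j))\bigr).
\]

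The key step is monotonicity. Since $\mathcal A$ is increasing, $\mathcal A(\bar j)\subseteq\mathcal A(j)$, so $\nu(\mathcal A(j))\ge\nu(\mathcal A(\bar j))$. Write $x=\nu(\mathcal A(\bar j))$ and $y=\nu(\mathcal A(j))$, so that $x\le y$. The hypothesis then forces
\[
 (1-r)(1-x)\le d,\qquad r\,y\le d .
\]
The first inequality, together with $r<1/2$ (so $1-r>1/2$), gives $1-y\le 1-x\le d/(1-r)<2d$. Hence
\[
 1-\mu_{\mathbf r}(\mathcal A)=r(1-y)+(1-r)(1-x)\le r\cdot\frac{d}{1-r}+d=\frac{d}{1-r}\le 2d,
\]
where the last step again uses $1-r\ge 1/2$.

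We do not expect a serious obstacle. The only points to watch are that $r_j<1/2$ is used exactly in the bound $1/(1-r)\le 2$, and that the inclusion $\mathcal A(\bar j)\subseteq\mathcal A(j)$ lets the ``$j\in A$'' part of the complement be controlled by the ``$j\notin A$'' part. The term $ry\le d$ is not even needed. A shorter route would note directly that $\mathcal A\supseteq$ nothing in particular, but the section computation above is the cleanest one to write out.
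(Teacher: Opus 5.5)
Your argument is correct. It starts from the same section decomposition as the paper: with $x=\mu_{\mathbf r_{-j}}(\mathcal A(\bar j))\le y=\mu_{\mathbf r_{-j}}(\mathcal A(j))$, both write $\mu_{\mathbf r}(\mathcal A\triangle\overline{\mathcal S}_j)=(1-r)(1-x)+ry$ and $1-\mu_{\mathbf r}(\mathcal A)=(1-r)(1-x)+r(1-y)$.

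The finishing step is different. The paper first shows that an increasing family is never closer than $r$ to the antistar, via $(1-r)(1-x)+ry\ge 1-r-(1-2r)y\ge r$, so $d\ge r$. It then bounds the $j\in A$ part crudely by $r(1-y)\le r\le d$, obtaining $d+r\le 2d$. You instead use $x\le y$ to transfer control from the $j\notin A$ part to the $j\in A$ part, $1-y\le 1-x\le d/(1-r)$. This gives $1-\mu_{\mathbf r}(\mathcal A)\le d/(1-r)$ in place of $d+r$. Your route never needs the lower bound $d\ge r$ and uses $r<1/2$ only for the final constant $1/(1-r)\le 2$. The paper's route, in exchange, records the fact $d\ge r$ as a by-product.

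Two cosmetic points. First, the strict inequality $d/(1-r)<2d$ in your chain requires $d>0$; this is true (since $d\ge r$), but you never show it and it is not used later, so write $\le$. Second, the closing sentence about a ``shorter route'' noting that ``$\mathcal A\supseteq$ nothing in particular'' is meaningless and should be deleted.
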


\begin{proof}
Let $\mathbf r_{-j}$ be obtained from $\mathbf r$ by deleting its
$j$th coordinate. Recall that 
$$
\begin{aligned}
\mathcal A(\bar j)
=
\{C\subseteq[n]\setminus\{j\}:C\in\mathcal A\},\qquad \mathcal A(j)
=
\{C\subseteq[n]\setminus\{j\}:
C\cup\{j\}\in\mathcal A\}.
\end{aligned}
$$
Since $\mathcal A$ is increasing, we have
$
\mathcal A(\bar j)\subseteq\mathcal A(j).
$
Put
$$
r=r_j,
\qquad
x=\mu_{\mathbf r_{-j}}\bigl(\mathcal A(\bar j)\bigr),
\qquad
y=\mu_{\mathbf r_{-j}}\bigl(\mathcal A(j)\bigr).
$$
Then
$
0\leq x\leq y\leq1.
$
Let $X$ be a random subset of $[n]$ distributed according to
$\mu_{\mathbf r}$. 
If $j\notin X$, then $X\in\overline{\mathcal S}_j$. Hence,
$X\in\mathcal A\triangle\overline{\mathcal S}_j$ exactly when
$X\notin\mathcal A$. Since the conditional distribution of
$X\setminus\{j\}$ is $\mu_{\mathbf r_{-j}}$, the conditional
probability of this event is $1-x$.
If $j\in X$, then $X\notin\overline{\mathcal S}_j$. Hence,
$X\in\mathcal A\triangle\overline{\mathcal S}_j$ exactly when
$X\in\mathcal A$, whose conditional probability is $y$.
Since $\mathbb P(j\notin X)=1-r$ and $\mathbb P(j\in X)=r$, the law
of total probability gives
$$
\mu_{\mathbf r}
\bigl(\mathcal A\triangle\overline{\mathcal S}_j\bigr)
=
(1-r)(1-x)+ry.
$$
Using $x\leq y$, $r<1/2$, and $y\leq1$, we obtain
$$
\begin{aligned}
(1-r)(1-x)+ry
\geq
(1-r)(1-y)+ry=
1-r-(1-2r)y\geq r.
\end{aligned}
$$
Combining this with
$
\mu_{\mathbf r}
\bigl(\mathcal A\triangle\overline{\mathcal S}_j\bigr)
\leq d,
$
we obtain $d\geq r$. 
Since $\mathcal A\subseteq2^{[n]}$, we have
$
\mathcal A\triangle2^{[n]}=2^{[n]}\setminus\mathcal A.
$
It follows that
$$
\mu_{\mathbf r}
\bigl(\mathcal A\triangle2^{[n]}\bigr)
=1-\mu_{\mathbf r}(\mathcal A)=1-(1-r)x-ry=
(1-r)(1-x)+r(1-y).
$$
Observe that
$$
(1-r)(1-x)
\leq
(1-r)(1-x)+ry
=
\mu_{\mathbf r}
\bigl(\mathcal A\triangle\overline{\mathcal S}_j\bigr)
\leq d.
$$
Therefore,
$$
\begin{aligned}
\mu_{\mathbf r}
\bigl(\mathcal A\triangle2^{[n]}\bigr)
=
(1-r)(1-x)+r(1-y)\leq
d+r\leq 2d,
\end{aligned}
$$
where the last inequality follows from $r\leq d$.
\end{proof}

The original families need not be increasing. The following
quantitative monotone reduction shows that replacing them by their
upsets changes their measures by at most $\varepsilon$.

\begin{lemma}\label{lem:upset-cost}
Let $\varepsilon\geq0$. Suppose that
$\mathcal A,\mathcal B\subseteq2^{[n]}$ are cross-intersecting and
$$
\mu_1(\mathcal A)\mu_2(\mathcal B)
\geq
(1-\varepsilon)pq.
$$
Then
$$
\mu_1(\mathcal A^\uparrow\setminus\mathcal A)
\leq
\varepsilon,
\qquad
\mu_2(\mathcal B^\uparrow\setminus\mathcal B)
\leq
\varepsilon.
$$
\end{lemma}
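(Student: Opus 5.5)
The plan is to combine the upset lemma with the sharp product bound of Theorem~\ref{thm:main-product}. Since the hypotheses of Section~\ref{se3} say $p=p_1$ and $q=q_1$ maximize the coordinates individually, we have $p_\ell q_\ell\leq pq$ for every $\ell$. So $pq=\max_\ell p_\ell q_\ell$, and Theorem~\ref{thm:main-product} applies with all probabilities in $(0,1/2)$. By Lemma~\ref{lem:upclosure}, the pair $\mathcal A^\uparrow,\mathcal B$ is still cross-intersecting. Theorem~\ref{thm:main-product} therefore gives
\[
 \mu_1(\mathcal A^\uparrow)\mu_2(\mathcal B)\leq pq .
\]
Similarly, $\mathcal A,\mathcal B^\uparrow$ is cross-intersecting, so $\mu_1(\mathcal A)\mu_2(\mathcal B^\uparrow)\leq pq$.

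Since $\mathcal A\subseteq\mathcal A^\uparrow$, we have $\mu_1(\mathcal A^\uparrow\setminus\mathcal A)=\mu_1(\mathcal A^\uparrow)-\mu_1(\mathcal A)$. Multiplying by $\mu_2(\mathcal B)$ and using the two displayed bounds gives
\[
 \mu_2(\mathcal B)\,\mu_1(\mathcal A^\uparrow\setminus\mathcal A)
 \leq pq-(1-\varepsilon)pq=\varepsilon pq .
\]
The symmetric computation gives $\mu_1(\mathcal A)\,\mu_2(\mathcal B^\uparrow\setminus\mathcal B)\leq\varepsilon pq$.

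It remains to divide by $\mu_2(\mathcal B)$, which is the only delicate point. We need $\mu_2(\mathcal B)\geq pq$, which does not follow from the hypothesis alone. One option is to bound $\mu_2(\mathcal B)$ from below via $\mu_1(\mathcal A)\leq1$, which gives $\mu_2(\mathcal B)\geq(1-\varepsilon)pq$. That yields only $\mu_1(\mathcal A^\uparrow\setminus\mathcal A)\leq\varepsilon/(1-\varepsilon)$, which is not the claimed bound.

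A better choice is to use the upset version of the product bound directly. From
\[
 \mu_1(\mathcal A^\uparrow)\mu_2(\mathcal B)\leq pq
 \quad\text{and}\quad
 \mu_1(\mathcal A)\mu_2(\mathcal B)\geq(1-\varepsilon)pq ,
\]
we get $\mu_1(\mathcal A)/\mu_1(\mathcal A^\uparrow)\geq1-\varepsilon$. Hence
\[
 \mu_1(\mathcal A^\uparrow\setminus\mathcal A)\leq\varepsilon\,\mu_1(\mathcal A^\uparrow)\leq\varepsilon .
\]
This is exactly the claim. The degenerate case $\mu_1(\mathcal A^\uparrow)=0$ is trivial, and if $\varepsilon\geq1$ the bound is trivial since all measures are at most $1$. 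The same argument with the roles of $(\mathcal A,\mu_1)$ and $(\mathcal B,\mu_2)$ exchanged gives $\mu_2(\mathcal B^\uparrow\setminus\mathcal B)\leq\varepsilon$. The ratio formulation is the key step, because it avoids needing any separate lower bound on $\mu_2(\mathcal B)$.
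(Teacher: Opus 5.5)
Your argument is correct and is essentially the paper's proof. Both combine Lemma~\ref{lem:upclosure} with Theorem~\ref{thm:main-product} to get $\mu_1(\mathcal A)\geq(1-\varepsilon)\mu_1(\mathcal A^\uparrow)$, and hence $\mu_1(\mathcal A^\uparrow\setminus\mathcal A)\leq\varepsilon$. The only cosmetic difference is that the paper applies Theorem~\ref{thm:main-product} to $(\mathcal A^\uparrow,\mathcal B^\uparrow)$ and splits the product of the two ratios (each lying in $[0,1]$), whereas you upset one family at a time. In either version the division is justified because $\varepsilon<1$ forces $\mu_1(\mathcal A)\mu_2(\mathcal B)>0$.
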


\begin{proof}
If $\varepsilon\geq1$, both conclusions are immediate because a
probability is at most one. We may  assume that
$
0\leq\varepsilon<1.
$
By Lemma \ref{lem:upclosure}, the upsets $\mathcal A^\uparrow$ and
$\mathcal B^\uparrow$ are cross-intersecting. Since
$p_i\leq p$ and $q_i\leq q$ for every $i\in[n]$, while
$p_1=p$ and $q_1=q$, we have
$
\max_{i\in[n]}p_iq_i=pq.
$
Applying Theorem~ \ref{thm:main-product} to
$\mathcal A^\uparrow,\mathcal B^\uparrow$ gives
$$
\mu_1(\mathcal A^\uparrow)
\mu_2(\mathcal B^\uparrow)
\leq pq.
$$
Combining this with the assumed lower bound gives
$$
\frac{\mu_1(\mathcal A)}
     {\mu_1(\mathcal A^\uparrow)}
\,
\frac{\mu_2(\mathcal B)}
     {\mu_2(\mathcal B^\uparrow)}
\geq
1-\varepsilon.
$$
The denominators are nonzero because
$(1-\varepsilon)pq>0$ and
$
\mathcal A\subseteq\mathcal A^\uparrow,
\mathcal B\subseteq\mathcal B^\uparrow.
$
Both ratios lie in $[0,1]$. Since their product is at least
$1-\varepsilon$, each ratio is itself at least $1-\varepsilon$.
Consequently,
$$
\begin{aligned}
\mu_1(\mathcal A^\uparrow\setminus\mathcal A)
&=
\mu_1(\mathcal A^\uparrow)-\mu_1(\mathcal A)=
\mu_1(\mathcal A^\uparrow)
\left(
1-
\frac{\mu_1(\mathcal A)}
     {\mu_1(\mathcal A^\uparrow)}
\right)\\
&\leq
\varepsilon\mu_1(\mathcal A^\uparrow)\leq
\varepsilon.
\end{aligned}
$$
The proof of the corresponding bound for $\mathcal B$ is identical.
\end{proof}

It remains to show that the two stars supplied by the
one-coordinate approximation have the same centre.

\begin{lemma}\label{lem:star-alignment}
There exists a constant $\delta_0=\delta_0(p,q)>0$ with the
following property. Suppose that
$\mathcal A,\mathcal B\subseteq2^{[n]}$ are cross-intersecting and
that, for some $a,b\in[n]$ and $0<\delta<\delta_0$,
$$
\mu_1(\mathcal A\triangle\mathcal S_a)\leq\delta,
\qquad
\mu_2(\mathcal B\triangle\mathcal S_b)\leq\delta.
$$
If
$
\mu_1(\mathcal A)\mu_2(\mathcal B)
\geq
(1-\delta)pq,
$
then $a=b$.
\end{lemma}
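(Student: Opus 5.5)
Argue by contradiction: suppose $a\neq b$, and show that cross-intersection then forces $\mu_1(\mathcal A)\mu_2(\mathcal B)$ to sit a fixed amount below $pq$. Consider the sets that avoid both $a$ and $b$ in a structured way. Let
\[
\mathcal A_0=\mathcal A\cap\mathcal S_a\cap\overline{\mathcal S}_b,
\qquad
\mathcal B_0=\mathcal B\cap\mathcal S_b\cap\overline{\mathcal S}_a .
\]
Since $\mu_1(\mathcal S_a\cap\overline{\mathcal S}_b)=p_a(1-p_b)$ and $\mathcal A$ misses at most $\delta$ of $\mathcal S_a$, we get $\mu_1(\mathcal A_0)\ge p_a(1-p_b)-\delta$. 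Similarly $\mu_2(\mathcal B_0)\ge q_b(1-q_a)-\delta$. Every $A\in\mathcal A_0$ and $B\in\mathcal B_0$ must intersect off $\{a,b\}$. So the sections $\mathcal A_0'=\{A\setminus\{a\}\}$ and $\mathcal B_0'=\{B\setminus\{b\}\}$, viewed on $[n]\setminus\{a,b\}$, are cross-intersecting. Their measures, under the restricted product measures, are at least $(\mu_1(\mathcal A_0))/(p_a(1-p_b))\ge 1-\delta/(p_a(1-p_b))$, and likewise for $\mathcal B_0'$.

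The difficulty is that $p_a,q_b$ might be tiny, so this normalisation is not uniform. I would handle it first. Near-extremality gives $\mu_1(\mathcal A)\ge (1-\delta)pq$ trivially, but more sharply $\mu_1(\mathcal A)\le p_a+\delta$ and $\mu_2(\mathcal B)\le q_b+\delta$. Hence
\[
(1-\delta)pq\le (p_a+\delta)(q_b+\delta)\le (p_a+\delta)(q+\delta),
\]
which forces $p_a\ge pq/(2q)-O(\delta)\ge p/4$ for small $\delta_0$, and symmetrically $q_b\ge q/4$. Thus $p_a(1-p_b)\ge p/8$ and $q_b(1-q_a)\ge q/8$, so the normalised sections have measure at least $1-8\delta/p$ and $1-8\delta/q$.

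Now two cross-intersecting families on the remaining coordinates cannot both have measure close to $1$. Take any $A\in\mathcal A_0'$. Its complement $C$ is disjoint from $A$, so $C\notin\mathcal B_0'$, and the same holds for every subset of $C$. In particular $\emptyset\notin\mathcal B_0'$ whenever $\mathcal A_0'\ne\emptyset$, and symmetrically $\emptyset\notin\mathcal A_0'$. The sets with no elements carry mass $\prod_{\ell\ne a,b}(1-q_\ell)$, which has no uniform positive lower bound as $n$ grows. So I would instead use the transversal bound from Lemma~\ref{lem:transversal}. Upward close both sections, which is allowed by Lemma~\ref{lem:upclosure}. Then $\mu_{\mathbf q}(\mathcal B_0')\le 1-\mu_{\mathbf 1-\mathbf q}(\mathcal A_0')\le 1-\mu_{\mathbf p}(\mathcal A_0')$, using the monotonicity in coordinate probabilities and $p_\ell\le 1-q_\ell$, with all measures restricted to $[n]\setminus\{a,b\}$. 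Hence $\mu(\mathcal A_0')+\mu(\mathcal B_0')\le 1$. But this sum is at least $2-8\delta/p-8\delta/q$, which is a contradiction once $\delta_0<\min\{p,q\}/16$, say $\delta_0=pq/100$. This yields $a=b$.

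I expect the main obstacle to be this uniformity issue: bounding $p_a$ and $q_b$ from below so the normalised sections are genuinely near $1$. That is where the hypothesis $\mu_1(\mathcal A)\mu_2(\mathcal B)\ge(1-\delta)pq$, together with $p_\ell\le p$ and $q_\ell\le q$, is essential. The other steps only reuse Lemmas~\ref{lem:upclosure} and \ref{lem:transversal} and the monotonicity observation from Subsection~\ref{sec:endpoint}.
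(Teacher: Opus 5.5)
Your proof is correct and follows the paper's overall plan. You argue by contradiction from $a\neq b$, bound $p_a,q_b$ from below, pass to the sections on $N=[n]\setminus\{a,b\}$, and show both have measure $1-O(\delta)$. Your $\mathcal A_0',\mathcal B_0'$ are exactly the paper's $\mathcal C=\{R\subseteq N:R\cup\{a\}\in\mathcal A\}$ and $\mathcal D=\{T\subseteq N:T\cup\{b\}\in\mathcal B\}$. The two routes differ in the two quantitative ingredients.

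\emph{Lower bound on $p_a$.} The paper uses only $p_a\geq\mu_1(\mathcal A)-\delta\geq(1-\delta)pq-\delta\geq pq/3$. You also use $\mu_2(\mathcal B)\leq q_b+\delta\leq q+\delta$, which gives the sharper $p_a\geq p/4$ once, for instance, $\delta\leq\min\{q,p/5\}$. Either bound suffices.

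\emph{Closing the contradiction.} This is the more substantive difference.
\begin{itemize}
\item The paper applies Theorem~\ref{thm:main-product} on $N$ to get $\mu_{\mathbf p_N}(\mathcal C)\mu_{\mathbf q_N}(\mathcal D)\leq pq<(1-L\delta)^2$. Before doing so it must check that $N\neq\emptyset$.
\item You use only $\mu_{\mathbf p_N}(\mathcal C)+\mu_{\mathbf q_N}(\mathcal D)\leq1$, obtained by the same transversal-plus-monotonicity argument the paper uses in Lemma~\ref{lem:variance-lower-bound}. This holds for any cross-intersecting pair when $p_\ell+q_\ell\leq1$. Equivalently, couple $X\sim\mu_{\mathbf p_N}$ and $Y\sim\mu_{\mathbf q_N}$ so that $X\cap Y=\emptyset$ always; then $\{X\in\mathcal C\}$ and $\{Y\in\mathcal D\}$ are disjoint events.
\end{itemize}

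\emph{What each buys.} Your route makes the alignment lemma independent of the sharp product theorem and covers $N=\emptyset$ automatically. The paper's product bound only needs the sections to exceed $\sqrt{pq}<1/2$, rather than to have measures summing above $1$. Since both sections have measure $1-O(\delta)$, that advantage affects only the constants.
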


\begin{proof}
Set
$$
L=\frac{3}{pq(1-p)}
$$
and choose
$$
\delta_0
=
\min\left\{
\frac{2pq}{3(1+pq)},
\frac{1-\sqrt{pq}}{2L}
\right\}.
$$
Since $0<pq<1$, this number is positive and depends only on
$p$ and $q$.

Suppose, for a contradiction, that $a\neq b$. Since
$
\mu_1(\mathcal S_a)=p_a
$ 
and
$
\mu_2(\mathcal S_b)=q_b,
$
we have
$$
\begin{aligned}
\bigl|\mu_1(\mathcal A)-p_a\bigr|
=
\bigl|
\mu_1(\mathcal A)-\mu_1(\mathcal S_a)
\bigr|\leq
\mu_1(\mathcal A\triangle\mathcal S_a)
\leq\delta,
\end{aligned}
$$
and similarly,
$$
\bigl|\mu_2(\mathcal B)-q_b\bigr|
\leq
\delta.
$$
Moreover, since each family has measure at most one, the product
lower bound implies
$$
\mu_1(\mathcal A),\mu_2(\mathcal B)
\geq
(1-\delta)pq.
$$
It follows that
$$
\begin{aligned}
p_a
\geq
\mu_1(\mathcal A)-\delta\geq
(1-\delta)pq-\delta=
pq-(1+pq)\delta\geq
\frac{pq}{3},
\end{aligned}
$$
where we used
$
\delta<\delta_0
\leq
\frac{2pq}{3(1+pq)}.
$
The same calculation gives
$$
q_b\geq\frac{pq}{3}.
$$

Let
$
N=[n]\setminus\{a,b\},
$
and define two families on $N$ by
$$
\begin{aligned}
\mathcal C
=
\{R\subseteq N:R\cup\{a\}\in\mathcal A\},\qquad
\mathcal D
=
\{T\subseteq N:T\cup\{b\}\in\mathcal B\}.
\end{aligned}
$$
Since $\mathcal A,\mathcal B$ are cross-intersecting, we infer that
 $\mathcal C,\mathcal D$ are cross-intersecting on $N$. 
Let
$$
\mathbf p_N=(p_i)_{i\in N},
\qquad
\mathbf q_N=(q_i)_{i\in N}.
$$
Thus, $\mu_{\mathbf p_N}$ and $\mu_{\mathbf q_N}$ are the
restrictions of $\mu_1$ and $\mu_2$, respectively, to $2^N$.

Let $X$ be distributed according to $\mu_1$. Consider the event
$$
a\in X,
\qquad
b\notin X,
\qquad
X\cap N\notin\mathcal C.
$$
On this event,
$
X=(X\cap N)\cup\{a\}.
$
By the definition of $\mathcal C$, this implies
$X\notin\mathcal A$. Hence, this event is contained in
$\mathcal S_a\setminus\mathcal A$. Independence of the coordinates
therefore gives
$$
\begin{aligned}
\delta
\geq
\mu_1(\mathcal S_a\setminus\mathcal A)\geq
p_a(1-p_b)
\bigl(1-\mu_{\mathbf p_N}(\mathcal C)\bigr).
\end{aligned}
$$
Since $p_a\geq pq/3$ and $p_b\leq p$, we obtain
$$
\begin{aligned}
\mu_{\mathbf p_N}(\mathcal C)
\geq
1-\frac{\delta}{p_a(1-p_b)}\geq
1-\frac{3\delta}{pq(1-p)}=
1-L\delta.
\end{aligned}
$$

Similarly, let $X$ be distributed according to $\mu_2$. The event
$$
b\in X,
\qquad
a\notin X,
\qquad
X\cap N\notin\mathcal D
$$
is contained in $\mathcal S_b\setminus\mathcal B$. Since
$q_b\geq pq/3$ and $q_a\leq q\leq p$, the same calculation gives
$$
\mu_{\mathbf q_N}(\mathcal D)
\geq
1-L\delta.
$$

Our choice of $\delta_0$ ensures that
$$
1-L\delta
>
1-L\delta_0
\geq
\frac{1+\sqrt{pq}}{2}
>
\sqrt{pq}.
$$
Since
$
\mu_{\mathbf p_N}(\mathcal C),
\mu_{\mathbf q_N}(\mathcal D)
\geq
1-L\delta
>
0,
$
both $\mathcal C$ and $\mathcal D$ are nonempty. Choose
$R\in\mathcal C$ and $T\in\mathcal D$. Since $\mathcal C$ and
$\mathcal D$ are cross-intersecting,
$$
\varnothing\neq R\cap T\subseteq N.
$$
Therefore, $N\neq\varnothing$.
The restricted coordinate probabilities still lie in $(0,1/2)$,
and
$
\max_{i\in N}p_iq_i\leq pq.
$
Applying
Theorem \ref{thm:main-product}  to the cross-intersecting families
$\mathcal C,\mathcal D\subseteq2^N$ gives
$$
\mu_{\mathbf p_N}(\mathcal C)
\mu_{\mathbf q_N}(\mathcal D)
\leq
\max_{i\in N}p_iq_i
\leq
pq.
$$
On the other hand,
$$
\mu_{\mathbf p_N}(\mathcal C)
\mu_{\mathbf q_N}(\mathcal D)
\geq
(1-L\delta)^2
>
pq,
$$
which is a contradiction. Therefore $a=b$.
\end{proof}

\subsection{Completion of the proof}

\begin{proof}[Proof of Theorem \ref{thm:main-stability}]
Since $p_i\leq p$ and $q_i\leq q$ for every $i\in[n]$, with equality
in both inequalities when $i=1$, we have
$
\max_{i\in[n]}p_iq_i=pq.
$
The hypothesis of the theorem and Theorem~\ref{thm:main-product}  give
$$
(1-\varepsilon)pq
\leq
\mu_1(\mathcal A)\mu_2(\mathcal B)
\leq
pq.
$$
Thus, the hypotheses cannot hold when $\varepsilon<0$.

Suppose first that $\varepsilon=0$. Then equality holds in
Theorem~\ref{thm:main-product}. Since $p,q<1/2$, we have $pq<1/4$, and the equality
characterization in Theorem~\ref{thm:main-product} gives
$
\mathcal A=\mathcal B=\mathcal S_j
$
for some $j\in[n]$.  Consequently,
$$
\mu_1(\mathcal A\triangle\mathcal S_j)
=
\mu_2(\mathcal B\triangle\mathcal S_j)
=
0.
$$
Thus, the required estimate holds when $\varepsilon=0$.

Next assume that $\varepsilon>0$.
Let
$
\widetilde{\mathcal A}=\mathcal A^\uparrow
$
and
$
\widetilde{\mathcal B}=\mathcal B^\uparrow.
$
By Lemma \ref{lem:upclosure}, these families are increasing and cross-intersecting.
Moreover,
$$
\mu_1(\widetilde{\mathcal A})
\mu_2(\widetilde{\mathcal B})
\geq
\mu_1(\mathcal A)\mu_2(\mathcal B)
\geq
(1-\varepsilon)pq.
$$
Lemma~\ref{lem:upset-cost} gives
\begin{align}
\mu_1(\widetilde{\mathcal A}\setminus\mathcal A)
\leq
\varepsilon,
\qquad
\mu_2(\widetilde{\mathcal B}\setminus\mathcal B)
\leq
\varepsilon.\label{f35}
\end{align}

Put
$$
\rho_0=\frac{pq}{2},
\qquad
\sigma_*=\rho_0(1-\rho_0)>0,
$$
and let $\delta_0=\delta_0(p,q)>0$ be the constant in
Lemma~\ref{lem:star-alignment}. Let
$C_{\mathrm{sd}}=C_{\mathrm{sd}}(p,q)>0$ be the constant in
Proposition~\ref{prop:higher-degree-estimate}, and define
$$
C_*=
\max\left\{
1,\frac{3201C_{\mathrm{sd}}}{\sigma_*}
\right\}.
$$
Finally, set
$$
\varepsilon_0
=
\min\left\{
\frac12,
\frac{\rho_0}{2C_*},
\frac{\delta_0}{C_*}
\right\}.
$$
All these constants depend only on $p$ and $q$.

We first consider the case
$
0<\varepsilon<\varepsilon_0.
$
Since $\varepsilon_0\leq1/2$,
applying Proposition~\ref{prop:higher-degree-estimate} to
$\widetilde{\mathcal A}$ and $\widetilde{\mathcal B}$, gives
$$
W_{>1}^{\mu_1}(1_{\widetilde{\mathcal A}})
+
W_{>1}^{\mu_2}(1_{\widetilde{\mathcal B}})
\leq
C_{\mathrm{sd}}\varepsilon.
$$
Moreover,
Lemma~\ref{lem:variance-lower-bound} gives
$$
\operatorname{Var}_{\mu_1}(1_{\widetilde{\mathcal A}})
\geq
\sigma_*,
\qquad
\operatorname{Var}_{\mu_2}(1_{\widetilde{\mathcal B}})
\geq
\sigma_*.
$$
We may apply
Lemma~\ref{lem:one-coordinate-approximation} separately under
$\mu_1$ and $\mu_2$. It gives families
$$
\mathcal H_i
\in
\{\varnothing,2^{[n]}\}
\cup
\{\mathcal S_j,\overline{\mathcal S}_j:j\in[n]\},
\qquad i=1,2,
$$
such that
$$
\begin{aligned}
\mu_1(\widetilde{\mathcal A}\triangle\mathcal H_1)
+
\mu_2(\widetilde{\mathcal B}\triangle\mathcal H_2)
&\leq
\frac{3201}
{\operatorname{Var}_{\mu_1}(1_{\widetilde{\mathcal A}})}
W_{>1}^{\mu_1}(1_{\widetilde{\mathcal A}})
+
\frac{3201}
{\operatorname{Var}_{\mu_2}(1_{\widetilde{\mathcal B}})}
W_{>1}^{\mu_2}(1_{\widetilde{\mathcal B}})\\
&\leq
\frac{3201C_{\mathrm{sd}}}{\sigma_*}\varepsilon\leq
C_*\varepsilon.
\end{aligned}
$$
Set
$
\delta=C_*\varepsilon<C_*\varepsilon_0.
$
The definition of $\varepsilon_0$ gives
$$
0<\delta<\delta_0,
\qquad
2\delta<\rho_0.
$$
Since both terms in the preceding sum are nonnegative, we obtain
$$
\mu_1(\widetilde{\mathcal A}\triangle\mathcal H_1)
\leq\delta,
\qquad
\mu_2(\widetilde{\mathcal B}\triangle\mathcal H_2)
\leq\delta.
$$
By Lemma~\ref{lem:variance-lower-bound}, we have
\begin{align}
\mu_1(\widetilde{\mathcal A}\triangle\varnothing)
=
\mu_1(\widetilde{\mathcal A})
\geq
\rho_0,\qquad
\mu_1(\widetilde{\mathcal A}\triangle2^{[n]})
=
1-\mu_1(\widetilde{\mathcal A})
\geq
\rho_0.\label{f36}
\end{align}
The corresponding inequalities also hold for
$\widetilde{\mathcal B}$ under $\mu_2$. 

We now exclude the two constant possibilities. If
$\mathcal H_1=\varnothing$, then
$$
\mu_1(\widetilde{\mathcal A}\triangle\mathcal H_1)
=
\mu_1(\widetilde{\mathcal A})
\geq
\rho_0,
$$
contradicting
$
\mu_1(\widetilde{\mathcal A}\triangle\mathcal H_1)
\leq
\delta
<
\rho_0.
$
If $\mathcal H_1=2^{[n]}$, then
$$
\mu_1(\widetilde{\mathcal A}\triangle\mathcal H_1)
=
1-\mu_1(\widetilde{\mathcal A})
\geq
\rho_0,
$$
which gives the same contradiction. Therefore,
$
\mathcal H_1\notin\{\varnothing,2^{[n]}\}.
$
Applying the same argument to $\widetilde{\mathcal B}$ under
$\mu_2$, we also obtain
$
\mathcal H_2\notin\{\varnothing,2^{[n]}\}.
$

Suppose that
$
\mathcal H_1=\overline{\mathcal S}_a
$
for some $a\in[n]$. Since $\widetilde{\mathcal A}$ is increasing and $\mu_1\bigl(\widetilde{\mathcal A}\triangle
\overline{\mathcal S}_a\bigr)\leq \delta$,
Lemma~\ref{lem:exclude-antistar} gives
$$
\begin{aligned}
\mu_1(\widetilde{\mathcal A}\triangle2^{[n]})
\leq
2\delta<
\rho_0,
\end{aligned}
$$
a contradiction with \eqref{f36}.
Thus, $\mathcal H_1$ cannot be of the form
$\overline{\mathcal S}_a$. The same argument excludes this
alternative for $\mathcal H_2$. Consequently, there exist
$a,b\in[n]$ such that
$$
\mathcal H_1=\mathcal S_a,
\qquad
\mathcal H_2=\mathcal S_b.
$$
It follows that
$$
\mu_1(\widetilde{\mathcal A}\triangle\mathcal S_a)
\leq
\delta,
\qquad
\mu_2(\widetilde{\mathcal B}\triangle\mathcal S_b)
\leq
\delta.
$$

Since $C_*\geq1$, we have $\delta=C_*\varepsilon\geq\varepsilon$. Hence,
$$
\mu_1(\widetilde{\mathcal A})
\mu_2(\widetilde{\mathcal B})
\geq
(1-\varepsilon)pq
\geq
(1-\delta)pq.
$$
Applying Lemma~\ref{lem:star-alignment} to
$\widetilde{\mathcal A}$ and $\widetilde{\mathcal B}$  gives
$a=b$. Denote their common value by $j$.
Since $\mathcal A\subseteq\widetilde{\mathcal A}$, we have
$
\mathcal A\setminus\mathcal S_j
\subseteq
\widetilde{\mathcal A}\setminus\mathcal S_j
\subseteq
\widetilde{\mathcal A}\triangle\mathcal S_j.
$
Moreover, if
$X\in\mathcal S_j\setminus\mathcal A$, then either
$X\in\widetilde{\mathcal A}\setminus\mathcal A$, or
$X\in\mathcal S_j\setminus\widetilde{\mathcal A}$. In the latter
case,
$
X\in
\widetilde{\mathcal A}\triangle\mathcal S_j.
$
Consequently,
$$
\mathcal A\triangle\mathcal S_j
\subseteq
(\widetilde{\mathcal A}\triangle\mathcal S_j)
\cup
(\widetilde{\mathcal A}\setminus\mathcal A).
$$
Combining this with \eqref{f35}, we obtain
$$
\begin{aligned}
\mu_1(\mathcal A\triangle\mathcal S_j)
\leq
\mu_1(\widetilde{\mathcal A}\triangle\mathcal S_j)
+
\mu_1(\widetilde{\mathcal A}\setminus\mathcal A)\leq
\delta+\varepsilon=
(C_*+1)\varepsilon.
\end{aligned}
$$
Applying the same argument to
$\mathcal B\subseteq\widetilde{\mathcal B}$, we obtain
$$
\mu_2(\mathcal B\triangle\mathcal S_j)
\leq
\mu_2(\widetilde{\mathcal B}\triangle\mathcal S_j)
+
\mu_2(\widetilde{\mathcal B}\setminus\mathcal B)
\leq
(C_*+1)\varepsilon.
$$

Finally, define
$$
c(p,q)
=
\max\left\{
C_*+1,\frac{1}{\varepsilon_0}
\right\}.
$$
Since $p=p_1$ and $q=q_1$, this constant depends only on
$p_1$ and $q_1$. The preceding argument proves the desired
conclusion when $0<\varepsilon<\varepsilon_0$.

If $\varepsilon\geq\varepsilon_0$, choose $j=1$. Since every
symmetric-difference measure is at most one,
$$
\max\left\{
\mu_1(\mathcal A\triangle\mathcal S_1),
\mu_2(\mathcal B\triangle\mathcal S_1)
\right\}
\leq
1
\leq
c(p,q)\varepsilon_0\leq c(p,q)\varepsilon.
$$
This completes the proof.
\end{proof}

\section{Concluding remarks}\label{se4}

Theorem~\ref{thm:main-product} determines the exact maximum of
$
\mu_{\mathbf p}(\mathcal A)
\mu_{\mathbf q}(\mathcal B)
$
for cross-intersecting families when
$\mathbf p,\mathbf q\in(0,1/2]^n$, and it characterizes all pairs
attaining this maximum. More precisely, let
$
M=\max_{i\in[n]}p_iq_i.
$
If $M<1/4$, then equality holds precisely when
$
\mathcal A=\mathcal B=\mathcal S_j
$
for some coordinate $j$ satisfying $p_jq_j=M$. If $M=1/4$, then
the theorem describes all additional equality cases in terms of
increasing families of size $2^{|I|-1}$ on
$
I=\{i\in[n]:p_i=q_i=1/2\}.
$
Thus, throughout the range
$\mathbf p,\mathbf q\in(0,1/2]^n$, both the sharp product bound and
all extremal configurations are completely determined.

Theorem~\ref{thm:main-stability} complements this exact result with a
quantitative stability statement in the strict range
$\mathbf p,\mathbf q\in(0,1/2)^n$. Suppose that 
$
p_1=\max_{i\in[n]}p_i
$
and 
$
q_1=\max_{i\in[n]}q_i.
$
If
$
\mu_{\mathbf p}(\mathcal A)
\mu_{\mathbf q}(\mathcal B)
\geq
(1-\varepsilon)p_1q_1,
$
then there is a coordinate $j\in[n]$ such that
$$
\max\left\{
\mu_{\mathbf p}(\mathcal A\triangle\mathcal S_j),
\mu_{\mathbf q}(\mathcal B\triangle\mathcal S_j)
\right\}
\leq
c(p_1,q_1)\varepsilon.
$$
Hence, every near-extremal pair is quantitatively close, under the two
respective measures, to the same star.

The preceding results leave open what happens when coordinate
probabilities are allowed to exceed $1/2$. The restriction in
Theorem~\ref{thm:main-product} cannot simply be removed. Indeed,
Remark~\ref{rem1} gives probability vectors with a coordinate larger
than $1/2$ for which a non-star cross-intersecting pair satisfies
$
\mu_{\mathbf p}(\mathcal A)
\mu_{\mathbf q}(\mathcal B)
>
\max_{i\in[n]}p_iq_i.
$
On the other hand, for arbitrary probability vectors
$\mathbf p,\mathbf q\in(0,1)^n$, a pair of common stars always satisfies
$
\mu_{\mathbf p}(\mathcal S_j)
\mu_{\mathbf q}(\mathcal S_j)
=
p_jq_j.
$
Choosing a coordinate $j$ that maximizes $p_jq_j$ therefore gives a
cross-intersecting pair with product
$
\max_{i\in[n]}p_iq_i.
$
The question is to determine exactly when no other cross-intersecting
pair can have a larger product.

\begin{problem}
Let $n\geq2$. Characterize all pairs of probability vectors
$
\mathbf p,\mathbf q\in(0,1)^n
$
for which every pair of cross-intersecting families
$\mathcal A,\mathcal B\subseteq2^{[n]}$ satisfies
$$
\mu_{\mathbf p}(\mathcal A)
\mu_{\mathbf q}(\mathcal B)
\leq
\max_{i\in[n]}p_iq_i.
$$
\end{problem}

\section*{Declaration of competing interest}
The authors declare that they have no competing interests.

\section*{Data availability}
No data was used for the research described in the article.

\section*{Acknowledgments}
The authors used AI tools only for early-stage exploration. They independently derived and confirmed all proofs and conclusions.

\end{document}